\def\xyma{\xymatrix@M.7em}
\def\lotimes{\buildrel{L}\over\otimes}
\numberwithin{equation}{section}
\newtheorem{cor}{Corollary}[section]
\newtheorem{defi}{Definition}[section]
\newtheorem{prop}{Proposition}[section]
\newtheorem{theorem}{Theorem}[section]
\newtheorem{lemma}{Lemma}[section]
\newtheorem{remark}{Remark}[section]
\newtheorem{example}{Example}[section]
\def\ga{\gamma}
\def\Ga{\Gamma}
\def\Lam{\Lambda}
\def\lam{\lambda}
\def\om{\omega}
\def\Om{\Omega}
\def\Le{\EuScript L}
\def\Z{{\mathbb{Z}}}
\def\la{\longrightarrow}
\def\ot{\otimes}
\def\lot{\stackrel{L}{\ot}}
\def\bee{\begin{equation}}
\def\ee{\end{equation}}
\def\noi{\noindent}
\def\tor{\mathrm{Tor}}
\def\Tor{\mathrm{Tor}}
\begin{document}

\title{Derived functors of non-additive functors and homotopy theory}
\author{Lawrence Breen and Roman Mikhailov}

\address{LB: Universit\'e Paris 13 \newline
\indent Laboratoire CNRS LAGA \newline
\indent 99, avenue Jean-Baptiste Cl\'ement \newline
\indent  93430  Villetaneuse\newline
\indent  France}
 \email{breen@math.univ-paris13.fr}
\address{RM: Steklov Mathematical Institute \newline
\indent Department of Algebra \newline
\indent Gubkina 8 \newline
 \indent Moscow 119991 \newline
\indent Russia}
\email{romanvm@mi.ras.ru}

\maketitle

\section{Introduction}
\vspace{.5cm}

A great number of  methods for  computing  the homotopy or the
homology  of a topological space
begin with a mod $p$ reduction, and this has proved to be very
efficient even though one then has to deal with an extension problem
when reverting  to integer coefficients.   However, such methods are not
well-suited when one considers spaces which are of an
algebraic nature, such as  Eilenberg-Mac Lane spaces.  That a purely functorial
approach  is possible in such  a case  was already apparent in the
classical paper of  S. Eilenberg and S. Mac Lane \cite{EM},  in which
they calculate directly
the integral homology  of Eilenberg-Mac Lane spaces in low degrees.
  Their results are expressed in terms
of what they called the ``new and  quite bizarre functors'' $\Om
(\Pi)$ and $R (\Pi)$. These functors became more intelligible with
the advent of the Dold-Puppe theory of derived functors of
non-additive functors \cite{DoldPuppe}, as they could then be
 interpreted as left-derived functors
 of the second  exterior power functor  and the second divided power
functor respectively. Higher analogues of these new functors subsequently
appeared in related contexts in a number a places, particularly in the Ph.D.
theses of Mac Lane's students R. Hamsher \cite{Hamsher}   and
G. Decker \cite{Decker}. However, this line of research was not vigorously
pursued, though one should mention  in this context  the work of   H. Baues
\cite{Bau} and of the first author \cite{Breen}, as well as   the
unpublished  preprint of  A. K. Bousfield  \cite{Bou}.

\bigskip

 In the present text, we compute in this functorial
 spirit certain  unstable homotopy
groups of  Moore spaces $M(A,n)$ and in particular of the corresponding
spheres $S^n= M(\Z,n)$ . This approach to the computation of the  homotopy groups of
spheres
is of
particular interest, since much   more structure is revealed when
these homotopy
groups  are described as special values  at the group $A= \Z$  of a
certain  functor.  Our
method is in some sense quite classical, since it relies on D.  Kan's
construction of the loop group $GK$ of a connected simplicial set $K$
and on
E. Curtis' spectral sequence determined by the lower central series
filtration of $GK$.  The initial terms in this spectral sequence were
described by Curtis in terms of the derived functors of the Lie
functors $\Le^n$ \cite{Curtis:65}. In addition, Curtis showed that
these Lie functors are endowed with a natural
filtration whose associated graded components are built up from more familiar functors.

\bigskip

It follows from this description that a key ingredient in such an
approach must be a good understanding of the derived functors of the
functor $\Le^n$. We are  able to achieve this  in low
degrees, where this is  made possible by the fact that this Curtis decomposition of
 the Lie functors reduces this problem to the computation of derived functors
 of iterates of certain elementary functors (particularly the degree
 $r$ symmetric  functor $SP^r$, and the related $r$th exterior
 algebra and  $r$th divided power functors  $\Lam^r$ and   $\Gamma_r$).
In order to deal with such  iterates, we require a composite
functor spectral sequence along the lines of the standard Grothendieck
spectral sequence \cite{weibel}, but for a pair of composable non-additive
functors such as those mentioned above. Such a non-additive composite functor
spectral sequence was   defined by D. Blanc and C. Stover
\cite{B-S}, but here we give a  formulation of its
initial terms which is  better suited to our computational objectives. In fact
this spectral sequence degenerates
in our context at $E^2$, and may rather be thought of, for the functors which
we consider, as a symmetrization of the K\"unneth formula and its
higher analogues \`a la Mac Lane \cite{Mac}, \cite{Mac1}. In this way
we are able to go beyond the computation of the iterates of
$\Lambda^2$  already  considered in this context  by the second author  \cite{MPBook}.

\bigskip

In our quest for the explicit values of the Dold-Puppe derived
functors of the Lie algebra functors $\Le^r$ for certain values of
$r$,
 we  deal with a number of questions of independent interest.
First of all, starting from the description by F. Jean, a student of
the first author, of the derived functors of $SP^r$ and
 $\Lambda^r$ \cite{Jean}, we give a complete
description  of these derived functors (as well as of the
 divided power functor $\Ga_r$)  for $r=2$,  by a
 method different  from  that of H. Baues
and T. Pirashvili in \cite{BauesPirashvili}. We then go on to give a
similarly complete and functorial  description of the corresponding
derived functors of $SP^3,\, \Lam^3$ and $\Ga_3$, and we deduce from this  a functorial  description of  the
derived functors $L_i\Le^3(A,n)$ for all $i$ and $n$. We also compute
certain derived functors of the quartic composite functors
$\Lambda^2\Lambda^2$ and $\Lambda^2\Gamma_2$ and deduce from them certain
values of the derived functors $L_i\Le^4(A,n)$.

 \bigskip

 In order to
achieve a sufficiently precise understanding of some of these derived
functors of $\Le^4$, we were led to introduce  an analogue for Lie functors of
the d\'ecalage morphisms. The latter are determined by the existence of Koszul
complexes,   which relate to each other  the derived functors of
$SP^r$, $\Lambda^r$ and $\Ga_r$. Just as $\Lam^r$ may be viewed as a
super-analogue\footnote{We prefer to call this  the super-analogue,
  rather than the   graded analogue as is more customary, since all our functors are  graded.} of $SP^r$, we need to introduce a  super-analogue
$\Le^r_s$ of the  Lie  functor $\Le^r$. It turns out that this must
not be  the
naive graded commutative version of the Lie functor, in which certain
signs are  changed in the  relations defining it. Instead, it is
necessary to introduce in its definition, following D. Leibowitz in her unpublished
thesis \cite{Leibowitz},  an additional divided square operation with respect
to the Lie super-bracket of odd degree elements.   While there no longer
exist   d\'ecalage isomorphisms between the Lie and super-Lie  functors,
there do exist   canonical pension maps between them, which we call the
semi-d\'ecalage morphisms, and which allows us to give a refined
description of their derived functors in certain cases.

\bigskip

We also rely at some point on the knowledge of the homology of the
 complex $C^n(A)$ dual to the de Rham complex first introduced in the
 present context by V. Franjou, J. Lannes and L. Schwartz in
 \cite{FLS}. We refer to \cite{roman} for  an explicit
 calculation of the values of the homology groups $H_0C^n(A)$ for all $n$
 announced by Jean
 in \cite{Jean}, as well as for  a description of  all the
 homology groups $H_iC^n(A)$ when  $n < 8$. The occurence of 8-torsion
 when $n= 8$, as well as that of a Lie functor when $n=6$, suggests
 that no simple description of these groups can be expected for a
 general $n$.

\bigskip

 In  \S \ref{main}, we use
these tools in order to achieve our goal of computing  algebraically
certain  homotopy groups  of
$n$-spheres and   Moore spaces $M(A,n)$. The task at hand is
twofold. The first part consists, as we have said, in computing the
initial terms of the  Curtis spectral sequence, and for this we rely
  on our knowledge of the
derived functors of certain Lie functors and their super-analogues.
The second  part  consists in understanding certain
 differentials in the spectral sequence. We rely here upon  various methods, some based  on the
 functoriality  of our construction and on the fact that the
 differentials are now natural transformations rather then simply
 group homomorphisms, and others more classical in spirit (the
 suspension of a Moore space, the comparison of a Moore space with the
 corresponding Eilenberg-Mac Lane space, ...).  We have at times in
 this final section made use of known results concerning the homotopy
 of Moore spaces for specific groups $A$, whenever this allowed us to
 progress with our own investigations.  It is quite striking to observe
 how far one can go in the description of these homotopy groups,  with only the knowledge of derived functors of
 quadratic and cubical functors as  the basic  input.

\bigskip

In this final section we proceed in logical order, beginning with
the homotopy groups of $S^2$ and $M(A,2)$  and then moving on to $M(A,n)$ for
increasing values of $n$.  For $A= \Z$, some of our results follow in
the stable range from \cite{6A} where a more efficitient spectral
sequence, which however depends on the choice of a fixed prime $p$,
 is considered (see also \cite{Curtis:71}). As an
illustration of our methods, we begin by explaining  how one may  obtain in this
way the known values of the 3-torsion of  $\pi_i(S^2)$, for values of $i$ up to
$14$. To have gone further, so as to retrieve the classical results
of H.Toda \cite{To}  up to $i= 22$,
would have obliged us to delve further into the  analysis of  the spectral sequence. We
also recover, and reinterpret, some  results of Baues and his
collaborators \cite{Bau},  \cite{Dre}, \cite{BB}.
  In particular, we obtain by our methods
 the results of Baues and Buth \cite{BB}  concerning $\pi_i(M(A,2))$ for $i=4$,  and
  give an improved description of the unpublished results of
 Dreckmann  for $i=5$ \cite{Dre} (see also Baues and Goerss \cite{BG}). Specializing to the case
  $A = \Z/p$, we obtain further information about  the groups
  $\pi_i(M(\Z/p,2))$  whenever  the prime $p$ is odd. By suspending
  these calculations, this gives us in particular  a fully functorial
  description of the  graded components associated to  a  natural filtration
  of  the group $\pi_6(M(A,3))$ which appears to be new. As a consequence of these computations, we can
  recover the value of $\pi_5(M(\Z/3,2))$,  a significant case of  the
  extension by   Neisendorfer \cite{Neisendorfer} to the prime   $p=3$
 of    Cohen, Moore and Neisendorfer's study  \cite{CohenNeisendorfer}
 of  the homotopy of Moore spaces (this value had in fact  previously been
 obtained by D. Leibowitz in  \cite{Leibowitz}).

\bigskip

As a final example, we   examine  the low degree homotopy groups of
$M(\Z/3,5)$ since this  allows us to exhibit in a simple context  some of
the techniques  on which we relied throughout this section. In fact, the
reader may wish to begin with  this case, before going on to the
more delicate unstable computations
 which precede it.

\bigskip

As will be apparent from this description of our paper, a
number of our results in homotopy theory have already appeared in
one form or another  in the litterature, where they are proved by
very diverse methods. Our aim here is to show that these can  all
be obtained by a uniform method, based solely on functorial
techniques from homological and homotopical algebra with integer
coefficients. We expect that such an approach to these questions
will not only allow one  to compute  specific  additional
homotopy groups, but more importantly will shed some new  light
upon their global structure.

\bigskip

{\it Acknowledgements}: We are indebted to P. Goerss for providing us
with a copy of the dissertation \cite{Dre}, and to
A. K. Bousfield for making available to us the thesis \cite{Leibowitz}
as well as  for  his  comments regarding a first version of the present  text.

\section{Derived functors}

\vspace{.5cm}

\subsection{Graded functors.}   Let $\sf Ab$ be the category of
abelian groups and $A$ an object of $\sf Ab$. For any chain complex $C_i$, we will
henceforth denote by $C[n]$  the chain complex defined by
\[
C[n]_i := C_{i-n} \ \text{for all $i$.} \]  In particular, the chain
complex $A[n]$ is concentrated in degree $n$.
 In addition to the graded  tensor power
functor $\otimes:=\bigoplus_{n\geq 0}\otimes^n$, the symmetric
power functor  $SP:=\bigoplus_{n\geq 0}SP^n$, and the exterior
power functor $\Lambda:=\bigoplus_{n\geq 0}\Lambda^n$ (the quotient
of $\otimes A$ by the ideal generated by elements $x\otimes x$ for
all $ x\in A$),  we will consider over $\Z$ the following somewhat
less well-known  functors:

\medskip

{\it 1. The divided power functor:\ } (see \cite{roby})
$\Gamma_*=\bigoplus_{n\geq 0}\Gamma_n: \sf Ab\to \sf Ab$. The
graded abelian group  $\Gamma_\ast(A)$ is generated by symbols
$\gamma_i(x)$ of degree $i\geq 0$ satisfying the following
relations for all $x,y \in A$: \begin{align*} & 1)\  \gamma_0(x) =
1 \\ & 2)\ \gamma_1(x)=x\\ & 3)\
\gamma_s(x)\gamma_t(x)=\binom{s+t}{s}\gamma_{s+t}(x) \\
& 4)\ \gamma_n(x+y)=\sum_{s+t=n}\gamma_s(x)\gamma_t(y),\ n\geq 1\\
& 5)\ \gamma_n(-x)=(-1)^n\gamma_n(x),\ n\geq 1.
\end{align*}
In particular, the canonical map $ A \simeq \Gamma_1(A)$ is an
isomorphism. The degree 2 component $\Ga_2(A)$ of
 $\Gamma_*(A)$  is the Whitehead functor $\Ga
(A)$. It is  universal  for homogenous
 quadratic maps from $A$ into abelian groups.  The following
 additional relations in  $\Gamma_*(A)$   are consequences of the previous ones:
\begin{align*}
& \gamma_r(nx)=n^r\gamma_r(x),\ n\in \mathbb Z;\\
& r\gamma_r(x)=x\gamma_{r-1}(x);\\
& x^r=r!\gamma_r(x);\\
& \gamma_r(x)y^r=x^r\gamma_r(y).
\end{align*}
In addition, a  direct computations implies that
$$
\Gamma_r(\mathbb Z/n)\simeq \mathbb Z/{n(r,n^\infty)},
$$
where  the extended g.c.d $(r,n^\infty)$ is defined by
 $(r,n^\infty) := \lim_{\,m\to \infty\,}(r,n^m)$.

\vspace{.5cm} \noindent{\it 2. The Lie functor\ }  $\EuScript L:
\sf Ab\to Ab$  (see \cite{Curtis:63}). The tensor algebra
$\otimes A$ is endowed
with  a $\mathbb Z$-Lie algebra structure, for which the  bracket
operation is defined by
$$[a,\,b]=a\otimes b-b\otimes a,\quad  a,b\in \otimes (A).$$
One   defines  $n$-fold brackets inductively by setting
\begin{equation}
\label{def:lie-br}
 [a_1, \ldots, a_n] := [[a_1\ldots, a_{n-1}],a_n]
\end{equation}
We will  denote $\ot A$, viewed as a  $\mathbb
Z$-Lie algebra,  by $\otimes(A)^{Lie}.$  Let  $\EuScript
L(A)=\bigoplus_{n\geq 1}\EuScript L^n(A)$ be the sub-Lie ring of
$\otimes(A)^{Lie}$ generated by $A$. Its  degree 2 and  3 components
are  generated
by the expressions
\begin{equation}
\label{lie-3}a\otimes b-b\otimes a \qquad \text{and} \qquad  a \otimes b \otimes c - b \otimes a \otimes c + c
\otimes a \otimes b - c \otimes b \otimes a
\end{equation}
where $a,b,c \in A$. $\EuScript L(A) $ is called the {\it free Lie ring generated by
the abelian group $A$}. It is  universal for homomorphisms from
$A$ to $\Z$-Lie algebras.
The grading of $\otimes A$ determines  a grading   on $\EuScript
L(A)$, so that we obtain a family  of endofunctors
 on the category of abelian groups:
$$
\EuScript  L^i: {\sf Ab}\to {\sf Ab},\ i\geq 1.
$$
In particular,
\bee
\label{liz}
\Le^i(\Z) = 0
\ee
 for all $i >1$. For any  free group $F$ and $i\geq 1$, one has the natural
Magnus-Witt isomorphism (\cite{Mag}, \cite{Witt})
\begin{equation}\label{magnuswitt}
\gamma_i(F)/\gamma_{i+1}(F)\simeq \EuScript  L^i(F_{ab}).
\end{equation}
 where $\gamma_i(F)$ is the
$i$th term in the lower central series of $F$.

\bigskip

\vspace{.5cm}\noindent{\it 3. The Schur functors}. We will also
consider the Schur functors $$J^n, Y^n, E^n: {\sf Ab}\to {\sf
Ab},\ n\geq 2$$ defined by
\begin{align}
\label{schur}
& J^n(A)=ker\{A\otimes SP^{n-1}(A)\to SP^n(A)\},\ n\geq 2,\\ \notag
& Y^n(A)=ker\{ A\otimes \Lambda^{n-1}(A)\to \Lambda^n(A)\},\ n\geq
2,\\ \notag
& E^n(A)=ker\{A\otimes \Gamma_{n-1}(A)\to \Gamma_n(A)\},\ n\geq
2\,.
\end{align}
In particular,
\begin{equation}
\label{J2} J^2(A) = E^2(A)  \simeq \Lambda^2A \qquad \text{and} \qquad
Y^2(A)  \simeq \Gamma_2(A)
\end{equation}
whenever $A$ is free. The functors $Y^n(A)$ are the  $\Z$-forms of
the  Schur functors $\mathbb{S}_{\lambda}(V)$ associated  to the
partition $\lambda= (2, 1\ldots,1)$ of the set $(n)$ (see
\cite{ful-har} exercise 6.11, \cite{ful} chapter  8 (19)). The functors
$J^n(A)$ and $E^n(A)$ are two distinct $\Z$- forms of the Schur
functors $\mathbb{S}_\mu$ associated to the  partition $\mu =
(n-1,1)$ of $(n)$, which is the conjugate
 partition of $\lambda$.

\bigskip

The functors $J^n$ and their derived functors were considered by
E. Curtis \cite{Curtis:65} and J. Schlesinger
\cite{Schlesinger:66}. Just as the quotient of a Lie ring
$\EuScript L$ by the ideal generated by all brackets is an abelian
Lie ring, one can consider the {\it metabelian} Lie rings. These
are  the quotients of  $\EuScript L$ by the ideal generated by
brackets of the form $[[ \ ], [\ ]]$. The following proposition
asserts that the functors $J^n$, restricted to free abelian groups
$A$, are metabelian Lie functors
\begin{prop}
\cite{Schlesinger:66}\label{slez}\index{Schlesinger, J.} Let $A$
be a free abelian group and $n\geq 2$. The 4-term  sequence
\begin{equation}
\label{slez1} 0\to\EuScript L^n(A)\cap \EuScript L^2\EuScript
L^2(A)\to  \EuScript L^n(A)\buildrel{p_n}\over{\longrightarrow}
A\otimes SP^{n-1}(A)\buildrel{r_n}\over{\longrightarrow}
SP^n(A)\to 0,
\end{equation}
is exact, where $r_n$ is the  multiplication, and the map  $p_n$
is defined by
$$
p_n: [m_1,\dots, m_n]\mapsto m_1\otimes \, m_2\dots
m_n\,-\,m_2\otimes \,m_1m_3,\dots m_n,\ \text{where }\ m_j\in A \,
\:\forall j.
$$ The projection
\begin{equation}
\label{defpn} \xymatrix{\EuScript L^n(A) \ar@{->>}[r] & J^n(A)}
\end{equation} of  $\EuScript L^n(A)$ onto its image in $A \ot
SP^{n-1}A$ will  also be denoted  $p_n$, so that if  we set
\[ \tilde{J}^n(A) := \EuScript L^n(A)\cap \EuScript L^2\EuScript
L^2(A), \]
 the sequence \eqref{slez1} splits into a pair of short exact sequences
\begin{equation}
\label{defjntilde} \xymatrix{ 0 \ar[r] &  \tilde{J}^n(A) \ar[r]
&\EuScript L^n(A) \ar[r]^{p_n}
  &  J^n(A) \ar[r] & 0\,.}
\end{equation}
\begin{equation}
\label{defjn}
 \xymatrix{ 0 \ar[r] & J^n(A) \ar[r] & A \ot SP^{n-1} \ar[r] & SP^n(A)
 \ar[r] & 0}
\end{equation} In particular,
\begin{equation}
\label{simplest} \EuScript  L^2(A)\simeq \Lambda^2(A)
 \qquad  \text{and}
 \qquad  \EuScript   L^3(A)\simeq
J^3(A) \end{equation}
  since $\tilde{J}^n(A)$ is trivial  for $n=2,3$.
\end{prop}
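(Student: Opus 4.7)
The plan is to verify the exactness of \eqref{slez1} position by position; the splitting into \eqref{defjntilde}--\eqref{defjn} is then formal, and the special cases \eqref{simplest} reduce to the vanishing of $\Le^2\Le^2$ in low degrees. Surjectivity of $r_n$ is immediate since $m_1 m_2 \cdots m_n$ is the image of $m_1 \ot m_2 \cdots m_n$, and $r_n \circ p_n = 0$ is a direct check on the defining formula for $p_n$.

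For the well-definedness of $p_n$ on $\Le^n(A)$, my plan is to realize $p_n$ as the restriction to $\Le^n(A) \subset A^{\ot n}$ of the canonical map
\[
\phi : A^{\ot n} \longrightarrow A \ot SP^{n-1}(A), \qquad a_1 \ot \cdots \ot a_n \longmapsto a_1 \ot a_2 \cdots a_n.
\]
A short induction on $n$, using the recursion $[m_1,\ldots,m_n] = [[m_1,\ldots,m_{n-1}], m_n]$ together with the fact that the symmetric product on $SP^{\geq 2}$ absorbs commutators, shows that $\phi$ applied to a left-normed bracket yields the formula claimed for $p_n$. To see that $\phi$ annihilates $\Le^2\Le^2$: for $u \in \Le^k$, $v \in \Le^{n-k}$ with $k, n-k \geq 2$, one has $\phi(u \ot v) = 0$ because the image of $v$ under the projection $A^{\ot(n-k)} \to SP^{n-k}$ vanishes (Lie brackets of length $\geq 2$ map to $0$ in the symmetric algebra), and symmetrically $\phi(v \ot u) = 0$, whence $\phi([u,v]) = 0$. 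This yields a factorization $\bar{p}_n : \Le^n(A)/(\Le^n \cap \Le^2\Le^2) \to J^n(A)$.

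The main obstacle is to show that $\bar p_n$ is an isomorphism for $A$ free abelian. Surjectivity onto $J^n(A) = \ker r_n$ is immediate since $J^n(A)$ is generated as an abelian group by elements $x \ot m_2 \cdots m_n - m_2 \ot x m_3 \cdots m_n$, each of which equals $p_n([x, m_2, \ldots, m_n])$. For injectivity, I would construct an explicit right inverse $\sigma : J^n(A) \to \Le^n(A)/(\Le^n \cap \Le^2\Le^2)$ by sending the generator above to the class of the left-normed bracket $[x, m_2, \ldots, m_n]$. The key point is that in the metabelian quotient of the free Lie ring, left-normed brackets $[x, m_2, \ldots, m_n]$ are symmetric in the trailing entries $m_2,\ldots,m_n$ (a classical identity, equivalent to the Jacobi identity modulo $\Le^2\Le^2$), so $\sigma$ descends from a well-defined map on $A \ot SP^{n-1}A$ to its kernel $J^n(A)$. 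One then checks that $\bar p_n \circ \sigma$ is the identity on generators, which forces $\bar p_n$ to be bijective.

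Finally, the isomorphisms in \eqref{simplest} follow at once: every nonzero element of $\Le^2\Le^2$ has bracket-length $\geq 4$, so $\tilde{J}^n(A) = 0$ for $n = 2,3$, and \eqref{slez1} collapses to $\Le^n(A) \simeq J^n(A)$. For $n=2$ one further has $J^2(A) \simeq \Lambda^2(A)$ since $\ker(A \ot A \to SP^2 A)$ is generated by the antisymmetric tensors $x \ot y - y \ot x$.
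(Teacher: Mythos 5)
The paper itself offers no proof of this proposition --- it is quoted from Schlesinger --- so the only question is whether your argument stands on its own. Most of it does: the realization of $p_n$ as the restriction of $\phi:A^{\ot n}\to A\ot SP^{n-1}(A)$, the induction giving the stated formula on left-normed brackets, the vanishing of $\phi$ on $\Le^2\Le^2(A)$ (because a Lie element of degree $\geq 2$ dies in the symmetric algebra), and the surjectivity of $p_n$ onto $J^n(A)$ for free $A$ are all correct and give exactness at $SP^n$, at $A\ot SP^{n-1}(A)$, and the inclusion $\Le^n\cap\Le^2\Le^2\subseteq\ker p_n$.

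The gap is in the injectivity step. Your "classical identity" is stated too strongly: modulo $\Le^2\Le^2$, the left-normed bracket $[x,m_2,m_3,\dots,m_n]$ is symmetric only in the entries $m_3,\dots,m_n$, \emph{not} in $m_2,\dots,m_n$. Indeed $[x,m_2,m_3]-[x,m_3,m_2]=[x,[m_2,m_3]]$ by Jacobi, which does not lie in $\Le^2\Le^2$; concretely, in $\Le^3(\Z^2)$ (where $\Le^3\cap\Le^2\Le^2=0$) one has $[x,x,y]=0$ but $[x,y,x]\neq 0$. Consequently the assignment $x\ot m_2\cdots m_n\mapsto [x,m_2,\dots,m_n]$ is \emph{not} well defined on $A\ot SP^{n-1}(A)$, and your right inverse $\sigma$ does not come for free by restriction. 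To repair this you must define $\sigma$ directly on $J^n(A)$: either exhibit a presentation of $J^n(A)$ by the generators $g(x;m_2,\dots,m_n)=x\ot m_2\cdots m_n-m_2\ot xm_3\cdots m_n$ subject to antisymmetry in the first two arguments, symmetry in the last $n-2$, the three-term cyclic (Jacobi-type) relation, and multilinearity, and check each relation against the corresponding Lie identity modulo $\Le^2\Le^2$; or, more economically, invoke the normal-form basis $\{[e_{i_1},e_{i_2},\dots,e_{i_n}]:\ i_1>i_2\leq i_3\leq\cdots\leq i_n\}$ of the degree-$n$ part of the free metabelian Lie ring over $\Z$ and verify that $p_n$ carries it bijectively onto a basis of $J^n(A)$ (a rank count over each monomial of $SP^n$). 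Either route supplies the missing content; as written, the proof of $\ker p_n\subseteq \Le^n\cap\Le^2\Le^2$ is not established. The remainder of your argument, including the deduction of \eqref{simplest}, is fine.
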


Let us now recall some the natural transfomations between these
functors:

\begin{alignat}{3}
 f_n:&\  SP^n(A)&\to \hspace{1cm} &\otimes^nA  \\
 & a_1 \dots a_n &\mapsto \hspace{1cm}  &\sum_{\sigma \in
\Sigma_n}a_{i_1}\otimes \dots \otimes a_{i_n} \notag
\\&&&\notag
\\ g_n: &\ \Lambda^n(A)&\to \hspace{1cm}  &\otimes^nA
\\
& a_1\wedge\dots \wedge a_n &\mapsto  \hspace{1cm}& \sum_{\sigma\in
\Sigma_n}\text{sign} (\sigma) \, a_{i_1}\,\otimes \dots \otimes
a_{i_n} \notag\\ &&& \notag \\ h_n: &\ \Gamma_n(A)& \to \hspace{1cm}  &\otimes^nA\label{rrref} \\
&  \gamma_{r_1}(a_1)\dots \gamma_{r_k}(a_k)&\mapsto \hspace{1cm}  &
\sum_{(i_1,\dots, i_n)}a_{i_1}\otimes \dots \otimes a_{i_n} \notag
\\&&& \notag
\end{alignat}

In these definitions of $f_n$ and $g_n$,  we have set $i_j:=\sigma
(j)$, whereas   the $(i_1,\dots,i_n)$ in the definition of $h_n$
range over the set of  $n$-tuples of integers for  which $j$
occurs $r_j$ times ($1\leq j\leq k$).
 When $A$ is free abelian, the induced  morphism
$$
h_n: \Gamma_n(A)\to (\otimes^nA)^{\Sigma_n},
$$ from $\Gamma_n(A)$ to the group of
   tensors invariant under the action of the symmetric group  is an isomorphism
for all $n\geq 1.$  By the  universal property of the algebra
$\Gamma_\ast(A)$,   $h_n$ may also be characterized as  the map
determined by the divided power algebra  structure on  $\ot  A: = \oplus_n
(\ot^nA)$, where  the product in this algebra  is  defined by  the
shuffle product,  and  the divided powers are characterized  by the rule $\ga_n(a) :=
a\ot \ldots \ot a  \in \ot^nA$  for all $a \in A$.

\subsection{Derived functors.}
Let $A$ be an abelian group, and $F$  an endofunctor on the
category of abelian groups. Recall that for every  $n\geq 0$ the
derived functor of $F$
 in the sense
of Dold-Puppe  \cite{DoldPuppe} are defined by
$$
L_iF(A,n)=\pi_i(FKP_\ast[n]),\ i\geq 0
$$
where $P_\ast \to A$ is a projective resolution of $A$, and
 $K$ is  the Dold-Kan transform,  inverse to the Moore normalization  functor
\[
N:  \mathrm{Simpl}({\sf Ab}) \to C({\sf Ab})
\]
from simplicial abelian groups to chain complexes \cite{weibel} Def. 8.3.6. We denote by
$LF(A,n)$ the object $FK(P_\ast[n])$ in the homotopy category of
simplicial abelian groups determined by  $FK(P_\ast[n])$, so that
\[ L_iF(A,n) = \pi_i(LF(A,n))\,.\]
  We
set $LF(A) :=LF(A,0)$ and  $L_iF(A):= L_iF(A,0)$ for any $\ i\geq
0$.  When the functor $F$ is additive,  the $L_iF(A) $ are isomorphic by
iterated
suspension to  $L_{i+n}F(A,n)$ for all $n$, and coincide with the
usual  derived functors of $F$.   As  examples of these  constructions, observe that the
simplicial models $LF(L\la M)$  of $LFA$ and  $FK((L\la M) [1])$
of $LF(A,1)$ associated to the two-term flat resolution
\begin{equation}\label{res11} 0\to L \buildrel{f}\over\to M \to A\to 0\end{equation}
 of an abelian group $A$ are respectively of the following form in low
 degrees:
\bee \label{lowdeg0}
\begin{matrix}
F(s_0(L)\oplus s_1(L) \oplus s_1s_0(M))
\end{matrix}
\begin{matrix}\buildrel{\partial_0,\partial_1,\partial_2}\over\longrightarrow\\[-3.5mm]\longrightarrow\\[-3.5mm]\longrightarrow\\[-3.5mm]\longleftarrow\\[-3.5mm]
\longleftarrow
\end{matrix}\
F(L\oplus s_0(M))
\begin{matrix}\buildrel{\partial_0,\partial_1}\over\longrightarrow\\[-3mm]\longrightarrow\\[-3mm]\longleftarrow\end{matrix}\
 F(M)\ee
 where the component $F(M)$ is  in degree zero, and
\bee \label{lowdeg}
\begin{matrix}
F(s_0(L)\oplus s_1(L)\oplus s_2(L)\oplus\\
s_1s_0(M)\oplus s_2s_0(M)\oplus s_2s_1(M))
\end{matrix}
\begin{matrix}\buildrel{\partial_0,\dots,\partial_3}\over\longrightarrow\\[-3.5mm]\longrightarrow\\[-3.5mm]\longrightarrow\\[-3.5mm]\longrightarrow\\[-3.5mm]\longleftarrow\\[-3.5mm]
\longleftarrow\\[-3.5mm]\longleftarrow
\end{matrix}\
F(L\oplus s_1(M)\oplus s_0(M))
\begin{matrix}\buildrel{\partial_0,\partial_1,\partial_2}\over\longrightarrow\\[-3mm]\longrightarrow\\[-3mm]\longrightarrow\\[-3mm]\longleftarrow\\[-3mm]\longleftarrow\end{matrix}\
 F(M)\ee
where
 the component
 $F(M)$ is  in degree 1.
It follows from the definition of homology that
$L_i\mathbb{Z}(A,n) \simeq H_i(K(A,n);\,\mathbb{Z})$ for all $n$,
where $K(A,n)$ is an Eilenberg-MacLane space associated to the
abelian group $A$ .
 \bigskip

\noindent{\it 1. Derived functors of $\otimes^n$} \cite{Mac1}. For
$n\geq 1,$ and abelian groups $A_1,\dots, A_n$, we
define\footnote{In \cite{Mac}, Mac Lane  uses the notation
$Trip(A_1,A_2,A_3)$ for the group $\Tor_1(A_1,A_2,A_3)$ and
$\Tor(A_1,\dots,A_n)$ for $\Tor_{n-1}(A_1,\dots,A_n)$.}
$$
\Tor_i(A_1,\dots, A_n):=H_i\left(A_1\buildrel{L}\over \otimes
\dots \buildrel{L}\over\otimes A_n\right),\ i\geq 0.
$$
where $A \buildrel{L}\over\otimes B$ is the derived tensor product
of the abelian groups $A$ and $B$ in the derived category of
abelian groups, as in \cite{weibel} \S 10.6.
  In particular,  $$\Tor_0(A_1,\dots, A_n)\simeq A_1\otimes
\dots\otimes A_n\,\hspace{.5cm} \text{and}\,\hspace{.5cm}
\Tor_i(A_1,\dots, A_n\,)=\,0 \quad \ i\geq n. $$ One sets
\[\Tor(A_1,A_2):=  \Tor_1(A_1,A_2) \quad \text{and}  \quad \Tor^{[n]}(A) :=
\Tor_{n-1}(A,\ldots,A) \quad  \text{($n$ copies of $A$)}. \] While
computations of such iterated Tor functors for specific abelian
groups $A$ are elementary, an explicit functorial description of
the multi-functors $\Tor_i$ is more delicate. The functorial short
exact sequence
$$
0\to \Tor(A_1,A_2)\otimes A_3\to \Tor_1(A_1,A_2,A_3)\to
\Tor(A_1\otimes A_2,A_3)\to  0\,,
$$
   splits unnaturally   \cite{Mac}, \cite{Mac1}. The
 Eilenberg-Zilber theorem determines  natural isomorphisms
$$
L_i\otimes^n  A\simeq \Tor_i(A,\dots,A),\ i\geq 0.$$ The group
$\Tor^{[n]}(A)$. It is generated by the $n$-linear expressions
$\tau_h(a_1, \ldots a_n)$  (where all $a_i$ live in the subgroup
$ {}_hA$ of elements $a$ of $A$ for which $ha=0  \ (h >0)$,
subject to the so-called slide relations
\begin{equation}
\label{slide} \tau_{hk}(a_1,\ldots, a_i,\ldots a_n) =
\tau_{h}(ka_1, \ldots, ka_{i-1}, a_i, k_{i+1}, \ldots, ka_n)
\end{equation} for all $i$ whenever  $hka_j = 0$ for all $j \neq
i$ and $ha_i=0$.
 The associativity of the derived
tensor product functor implies that there are   canonical
isomorphisms
$$
\Tor^{[n]}(A)\simeq \Tor(\Tor^{[n-1]}(A),A),\ n\geq 2.
$$ The  description of derived functors
$L_i\otimes^n A$ for a general $i$ follows from that of
$\Tor^{[n]}(A)$. For every abelian group $A$, $n\geq 1,\ 1\leq
i\leq n-1,$  the group $L_i\otimes^n(A)$ is by  \cite{Mac1}   the quotient
:
$$
L_i\otimes^n(A)\simeq \Tor^{[i+1]}(A)\otimes
(\otimes^{n-i-1}(A))/Jac_{\otimes},
$$
where $Jac_\otimes$ is  the subgroup  of  generalized Jacobi-type
relations, generated by the elements
$$
\sum_{k=1}^{i+2}(-1)^k\tau_{h}(x_1,\dots, \widehat{x_k},\dots,
x_{i+2})\otimes x_k \otimes x_{i+3}\otimes \dots \otimes x_{n}
$$
for all $x_1, \dots, x_n \in
 A$.

\vspace{.5cm} \noindent{\it 2. Derived functors of $SP^n$.} The
map $\ot^n \la SP^n$  induces  a natural epimorphism \bee
\label{epi} \Tor^{[n]}(A)\to L_{n-1}SP^n(A) \ee which sends the
generators $\tau_h(a_1,\dots,a_n)$ of $ \Tor^{[n]}(A)$ to
generators $\beta_h(a_1,\dots,a_n)$ of
\[ \mathcal{S}_n(A) :=L_{n-1}SP^n(A)\,.\]
The kernel of this map is generated by the elements
$\tau_h(a_1,\dots,a_n)$ with $a_i=a_j$ for some $i\neq j$. It is
shown by Jean  in  \cite{Jean} that
\begin{equation}\label{syder}
L_iSP^{n}(A)\simeq (L_iSP^{i+1}(A)\otimes
SP^{n-(i+1)}(A))/Jac_{SP},
\end{equation}
where $Jac_{SP}$ is the subgroup generated by elements of the form
$$
\sum_{k=1}^{i+2}(-1)^k\beta_h(x_1,\dots, \hat x_k,\dots,
x_{i+2})\otimes x_ky_1\dots y_{n-i-2}.
$$ with $x_i \in {}_hA$ and $y_j \in A$ for all $i,j$. The filtration
of $\Z(A,n)$ by powers of the augmentation ideal determine a
filtration on the homology groups  $H_r(K(A,n))$, whose associated graded pieces
are the  $L_rSP^s(A,n)$ \cite{Breen}.

\vspace{.5cm}\noindent{\it 3. Derived functors of $\Lambda^n$.}
For any abelian group $A$ and $n\geq 1,$ we set
$$
\Omega_n(A):=L_{n-1}\Lambda^n(A).
$$
 Consider the action of the symmetric  group $\Sigma_n$ on
$\Tor^{[n]}(A)$, defined by
$$
\sigma \tau_h(a_1,\dots,
a_n)=\text{sign}(\sigma)\,\tau_h(a_{\sigma(1)},\dots,
a_{\sigma(n)})$$ where $ha_1=\dots=ha_n=0,\ a_i\in A,\ \sigma  \in
\Sigma_n.$ We  denote this action  by  $\Sigma_n^\epsilon$. The
natural transformations $g_n$  induces  functorial isomorphisms
between $\Omega_n(A)$ and the $\Sigma_n^\epsilon$-invariants in
$\Tor^{[n]}(A)$
 \cite{Breen},  \cite{Jean} th. 2.3.3:
$$
\Omega_n(A)\simeq (\Tor^{[n]}(A))^{\Sigma_n^\epsilon}.
$$
In particular, for all $n>0$,
 \begin{equation}
\label{omeg-cycl} \Omega_n(\Z/r)\simeq \Z/r \,.
\end{equation}
 In addition,  the morphisms $\tau_h$ which describe the Tor
 functors now  symmetrize  to homomorphisms
\begin{equation}
\label{deflamh} \lambda^n_h: \Gamma_n(\ _hA)\to \Omega_n(A)
\end{equation}
for  $h\geq 1$ and the group $\Omega_n(A)$ is generated by  the
elements
 \begin{equation}\label{torprod}
\omega_{i_1}^h(x_1)\ast \ldots \ast \omega_{i_j}^h(x_j):=
\lambda_h(\gamma_{i_1}(x_1)\dots \gamma_{i_j}(x_j))
\end{equation}
 with $i_k \geq 1$ for all $k$, and    $\sum_{k}i_k = n$. These
 satisfy relations which may be thought of, as in \cite{Breen}, as  symmetrized versions of the
 slide relations \eqref{slide}.
The following description of the derived functors $L_i\Lambda^n$
is given in \cite{Jean} Theorem 2.3.5:
\begin{equation}
\label{derlam} L_i\Lambda^n(A)\simeq (\Omega_{i+1}(A)\otimes
\Lambda^{n-i-1}(A))/Jac_{\Lambda} \,.\end{equation} Here
$Jac_{\Lambda}$ is the subgroup generated by the  expressions
\[
\sum_{k=1}^j\omega_{i_1}^h(x_1)\ast \dots\ast
\omega^h_{i_k-1}(x_k) \ast \ldots \ast \omega_{i_j}^h(x_j) \otimes
x_k \wedge y_1 \wedge \ldots y_{n-i-2}
\]
for all $h$, with $\sum_{k=1}^j = i+2$. In particular, this
implies that for any finite cyclic group  $A$,
\begin{equation}
\label{LLambda-cycl} L_i\Lambda^n(A) = 0  \qquad  i \neq n-1.
\end{equation}

\noindent{\it 4. Derived functors of $\Gamma_n$.} Not
all  is known about derived functors of the divided power
functors.  For an abelian group $A$,  the  double d\'ecalage isomorphism
(described in  \eqref{ddec} below)  determines a composite   isomorphism
$$
L_1\Gamma_2(A)\simeq  L_5SP^2(A,2) \simeq   H_5(K(A,2),\mathbb{Z}) $$ so that
$L_1\Gamma_2(A)$ is isomorphic to the functor $R(A)$ of
Eilenberg-Mac Lane \cite{EM}  \cite{EM} \S 22, defined as
$$   (\Tor(A,A)\oplus \Gamma_2({}_2A))/S,
$$
where $S$ is the  subgroup generated by elements
\begin{align*}
& \tau_h(x,x),\ x\in\ _hA,\ h\in \mathbb N,\\
& \gamma_2(x+y)-\gamma_2(x)-\gamma_2(y)-\tau_2(x,y),\ x,y\in\ _2A.
\end{align*}
 More generally, we  set
\[ R_n(A) := L_{n-1}\Gamma_n(A)\,, \] so that $ R_2(A) = R(A)$, even though this
is inconsistent with  the notation  in \cite{Decker}. The sequence \bee
\label{exsp2}
 0 \la SP^2(A) \la \Gamma_2(A) \la A \ot \Z/2\,, \ee
 is exact for any abelian group $A$, and  derives to the
short exact sequence
$$
0\to L_1SP^2(A)\to L_1\Gamma_2(A)\to\ \Tor(A,\mathbb Z/2)\to 0.
$$
Analogous short   exact sequences were obtained in \cite{Jean} \S 3.1 for the
functor $\Gamma_3$ :
\begin{align}
\label{l1ga2}
& 0\to L_1SP^3(A)\to L_1\Gamma_3(A)\to (\Tor(A,\mathbb Z/2)\otimes A\otimes \mathbb Z/2)\oplus \Tor(A,\mathbb Z/3)\to 0\\
& 0\to L_2SP^3(A)\to L_2\Gamma_3(A)\to\ \Tor(A,\mathbb
Z/2)\otimes\ \Tor(A,\mathbb Z/2)\to 0 \notag
\end{align}

\subsection{Koszul complexes} (\cite{Quillen},
\cite{illusie} I (4.3.1.3)). Let $f: P\to Q$ be a homomorphism of abelian
groups. For $n\geq 1$ and any $k=0,\dots, n-1$ consider the  maps
$$
\kappa_{k+1}: \Lambda^{k+1}(P)\otimes SP^{n-k-1}(Q)\to
\Lambda^k(P)\otimes SP^{n-k}(Q)
$$
defined, for $p_i \in P$ and $ q_j \in Q$, by:
\[
\kappa_{k+1}: p_1\wedge \dots\wedge p_{k+1}\otimes q_{k+2}\dots
q_n\mapsto  \sum_{i=1}^{k+1}(-1)^{k+1-i}p_1\wedge \dots \wedge
\hat p_i\wedge \dots \wedge p_{k+1}\otimes f(p_i)\,q_{k+2}\dots
q_n.
\]
The  associated Koszul complex is defined by
\begin{equation}\label{koszul1}
Kos_n(f):\ \  0\to \Lambda^n(P)\buildrel{\kappa_n}\over\to
\wedge^{n-1}(P)\otimes Q\buildrel{\kappa_{n-1}}\over\to \dots\to
P\otimes SP^{n-1}(Q)\buildrel{\kappa_1}\over\to SP^n(Q)\to 0\,.
\end{equation}
 Dually, one   defines  maps
$$
 \kappa^{k+1}: \Gamma_{k+1}(P)\otimes \Lambda^{n-k-1}(Q)\to
\Gamma_k(P)\otimes \Lambda^{n-k}(Q),\ k=0,\dots, n-1
$$
 by setting
\begin{multline}
\kappa^{k+1}: \gamma_{r_1}(p_1)\ldots  \gamma_{r_k}(p_k) \otimes
q_1\wedge\ldots \wedge q_{n-k-1}
\mapsto \\
 \sum_{j=1}^k  \gamma_{r_1}(p_1)\ldots \gamma_{r_{j-1}}(p_j)\ldots \gamma_{r_k}(p_k)
\otimes f(p_j) \wedge q_1\wedge \ldots \wedge q_{n-k-1}
\end{multline}
These maps  determine  a  dual Koszul complex:
\begin{equation}\label{koszul2}
Kos^n(f):\ \ 0\to \Gamma_n(P)\buildrel{\kappa ^n}\over\to
\Gamma_{n-1}(P)\otimes Q\buildrel{ \kappa^{n-1}}\over\to \dots \to
P\otimes \Lambda^{n-1}(Q) \buildrel{\kappa^1}\over\to
\Lambda^n(Q)\to 0
\end{equation}
The complexes $Kos_n(f)$ and $Kos^n(f)$ are the total degree $n$
components of the Koszul complexes $\Lambda(P)\otimes SP(Q)$ and
$\Gamma(P)\otimes \Lambda(Q)$ associated  to a given  homomorphism
$f:P\to Q$. For a two-term flat resolution \eqref{res11} of an
abelian group $A$,
 the
complexes $Kos_n(f)$ and $Kos^n(f)$ represent the derived category
objects $LSP^n(A)$ and $L\Lambda^n(A)$ respectively (see for
example  \cite{Kock}). In particular, when $P$ is free abelian and
$f$ the identity arrow, both complexes are acyclic.

For $n\geq 2,$ the derived category object
$LSP^{n-1}(A)\buildrel{L}\over \otimes A$ may be represented, for
some 2-term flat resolution  $f:L \to M$ of $A$, by the tensor
product of $Kos_n(f)$ and $L \to M$, in other words as a total
complex associated to the bicomplex
$$
\xymatrix@R=7pt{\Lambda^{n-1}(L)\otimes L\ar@{->}[r] \ar@{->}[d] & \dots \ar@{->}[r] & L\otimes SP^{n-2}(M)\otimes L \ar@{->}[r] \ar@{->}[d] & SP^{n-1}(M)\otimes L\ar@{->}[d]\\
\Lambda^{n-1}(L)\otimes M\ar@{->}[r] & \dots \ar@{->}[r] &
L\otimes SP^{n-2}(M)\otimes M\ar@{->}[r] & SP^{n-1}(M)\otimes M}
$$
The diagram  $LJ^n(A)\to LSP^{n-1}(A)\buildrel{L}\over\otimes A\to
LSP^n(A)$ in the derived category may therefore  be represented by
the following diagram of (horizontal) complexes:
\begin{equation}\label{kosdiag}
{
 \xymatrix@R=7pt{ Y^n(L) \ar@{^{(}->}[d] \ar@{^{(}->}[r] & \dots
\ar@{->}[r] & L\otimes SP^{n-1}(M)\otimes M \ar@{->}[r]
\ar@{^{(}->}[d] & J^n(M) \ar@{^{(}->}[d]
\\
\Lambda^{n-1}(L)\otimes L \ar@{->>}[d] \ar@{^{(}->}[r] & \dots
\ar@{->}[r] & L\otimes SP^{n-2}(M)\otimes M\oplus
SP^{n-1}(M)\otimes L \ar@{->>}[d] \ar@{->}[r] &
SP^{n-1}(M)\otimes M \ar@{->>}[d]\\
\Lambda^n(L) \ar@{^{(}->}[r] & \dots \ar@{->}[r] & L\otimes
SP^{n-1}(M) \ar@{->}[r] & SP^n(M)} }
\end{equation}
The upper line
\begin{equation}\label{y-kos} \xyma{ Y^n(L)  \ar@{->}[r] & \dots \ar@{->}[r] &
L\otimes SP^{n-1}(M)\otimes M \ar@{->>}[r]  & J^n(M)}
\end{equation} in this diagram
 is  a Koszul complex for the
 functors $J^n$ and $Y^n$. A similar diagram, whose lower line is the
 dual Koszul complex \eqref{koszul2} for $n=3$ is described in
 appendix \ref{app:derKcx} below.
 For  Koszul complexes
associated to  more general Schur functors, see \cite{lascoux}
lemma 1.9.1, \cite{a-b-w}.

\subsection{Pensions and d\'ecalage}

Consider the homomorphisms
 (\cite{Bou1} 7.4)
\begin{align*}
& \eta_n: \Lambda^n(A)\otimes \Lambda^n(B)\to SP^n(A\otimes B),\\
& \nu_n: \Gamma_n(A)\otimes SP^n(B)\to SP^n(A\otimes B).
\end{align*}
 These are characterized as the unique homomorphisms for which  the corresponding
 diagrams
\begin{equation}\label{pension1}
\xyma{\Lambda^n(A)\otimes \Lambda^n(B) \ar@{->}[r]^(.55){\eta_n}
\ar@{->}[d]_{g_n\otimes g_n} & SP^n(A\otimes B) \ar@{->}[d]_{f_n}\\
(\otimes^n (A))\otimes (\otimes^n (B)) \ar@{->}[r]^(.58){\lambda_n}
& \otimes^n(A\otimes B)}
\qquad  \qquad
\xyma{\Gamma_n(A)\otimes SP^n(B) \ar@{->}[r]^(.55){\nu_n}
\ar@{->}[d]_{h_n\otimes f_n} & SP^n(A\otimes B) \ar@{->}[d]_{f_n}\\
(\otimes^n (A))\otimes (\otimes^n (B)) \ar@{->}[r]^(.58){\lambda_n}
& \otimes^n(A\otimes B)}
\end{equation}
commutes, with $\lambda_n$  defined by
$$
\lambda_n: (a_1\otimes \dots \otimes a_n)\otimes (b_1\otimes
\dots\otimes b_n)\mapsto (a_1\otimes b_1)\otimes \dots\otimes
(a_n\otimes b_n),\ a_i\in A,\ b_i\in B.
$$
The map $\eta_n$ is  explicitly given   by the formula
\begin{equation}
\label{etan} (a_1 \wedge \ldots \wedge a_n) \otimes (b_1 \wedge
\ldots \wedge b_n) \mapsto \sum_{\sigma\in \Sigma_n} \text{sign}
(\sigma) (a_1 \otimes b_{\sigma(1)}) \ldots (a_n \otimes
b_{\sigma(n)})\,. \end{equation}

It is shown in  \cite{Bou1} 7.6 that for any free simplicial
abelian group $X$, the maps $\eta_n$ and $\nu_n$ induce the
isomorphisms of homotopy groups
\begin{align}
& \pi_i(\Lambda^n(X)\otimes \Lambda^nK(\mathbb Z,1))\to
\pi_i(SP^n(X\otimes K(\mathbb Z,1))),\ i\geq 0,\label{pen11}\\
& \pi_i(\Gamma_n(X)\otimes SP^nK(\mathbb Z,2))\to
\pi_i(SP^n(X\otimes K(\mathbb Z,2))),\ i\geq 0.\label{pen12}
\end{align}
so that there are natural isomorphisms
\begin{align*} & L\Lambda^n(A,m)\buildrel{L}\over\otimes
\Lambda^n(\mathbb
Z,1)\simeq LSP^n(A,m+1),\\
& L\Gamma_n(A,m)\buildrel{L}\over\otimes SP^n(\mathbb Z,2)\simeq
LSP^n(A,m+2).
\end{align*}
in the derived category. These derived pairings induce for  $n
\geq 1$, by adjunction with the volume element $n$-cycle in
$\Lambda^n(\mathbb{Z}, 1)_n = \Lambda^n(\mathbb{Z}^n)$ and the
corresponding element in $SP^n(\mathbb Z,2)$ respectively, a pair of
functorial pension morphisms
\begin{align}
&L\Lambda^j(A,n)[j] \la LSP^j(A,n+1)  \label{bq1a}\\
& L\Gamma^j(A,n)[j] \la L\Lambda^j(A,n+1)  \label{bq2a}
\end{align}
in the derived category. The maps induced on homotopy groups are
are  of the form
\begin{align}
& L_{i}\Lambda^j(A,n)\simeq  L_{i+j}\,SP^j(A,n+1) \label{bq1}\\
& L_{i}\Gamma_j(A,n)\simeq  L_{i+j}\,\Lambda^j(A,n+1)\label{bq2}
\end{align}
The inverses of  these maps are  iterated boundary maps arising from  the exactness of the Koszul
complexes and are known as d\'ecalage
isomorphisms  \cite{illusie}  I 4.3.2.  Composing the last two
determines a double d\'ecalage isomorphism:
\bee
\label{ddec}
 L_{i}\Gamma_j(A,n)\simeq L_{i+2j}\,SP^j(A,n+2)\,.
\ee

Similarly,  it follows
from the existence of  Koszul sequences of type \eqref{y-kos}
that there exist  d\'ecalage isomorphisms
\begin{equation}
\label{declie}
 L_{i}Y^j(A,n)\simeq  L_{i+j}J^j(A,n+1)
\end{equation}
between the derived functor of
$J^j$ and $Y^j$ for all $j,n \geq 0$.

\section{The de Rham complex and its dual}

\vspace{.5cm} Let $A$ be an abelian group. For $n\geq 1$, let
$D_*^n(A)$ and $C_*^n(A)$ be the complexes of abelian groups
defined by
\begin{align*}
& D_i^n(A)=SP^i(A)\otimes \Lambda^{n-i}(A),\ 0\leq i\leq n,\\
& C_i^n(A)=\Lambda^i(A)\otimes \Gamma_{n-i}(A),\ 0\leq i\leq n,
\end{align*}
where the differentials $d^i: D_i^n(A)\to D_{i-1}^n(A)$ and $d_i:
C_i^n(A)\to C_{i-1}^n(A)$ are:
\begin{align*}
& d^i((b_1\dots b_i)\otimes b_{i+1}\wedge \dots \wedge b_n)=
\sum_{k=1}^i(b_1\dots \hat b_k \dots b_i)\otimes b_k\wedge
b_{i+1}\wedge\dots \wedge b_n\\
& d_i(b_1\wedge \dots \wedge b_i\otimes X)=\sum_{k=1}^i(-1)^k
b_1\wedge \dots \wedge\hat b_k\wedge \dots \wedge b_i\otimes b_kX
\end{align*}
for any $X\in \Gamma_{n-i}(A)$. The complex $D^n(A)$ is the degree
$n$ component of the classical de Rham complex, first introduced
in the present context of polynomial functors in \cite{FLS} and
denoted $\Omega_n$ in \cite{Franjou}. The dual complexes  $C^n(A)$
were considered in \cite{Jean}. We will call them  the dual de
Rham complexes.

\bigskip

We will  now  give a functorial description of  certain homology
groups of these  complexes $C^n(A)$.
\begin{prop}\label{jean}
Let $A$ be a free abelian. Then \begin{enumerate} \item
\cite{Franjou} For any prime number $p$, $H_0C^p(A)=A\otimes
\mathbb Z/p,$ and $H_iC^p(A)=0,$ for all $i>0$; \item \cite{Jean}
 There is a natural isomorphism
$$ H_0C^n(A)\simeq \bigoplus_{p|n,\ p\ \text{prime}}\Gamma_{n/p}(A\otimes \mathbb Z/p).
$$
\end{enumerate}
\end{prop}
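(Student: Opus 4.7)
The plan is to exploit the exponential property of both $\Lambda^{*}$ and $\Gamma_{*}$ to reduce to the rank-one case, compute this case by hand, and then recover the general statement via the Künneth formula. Since $\Lambda^{*}(A\oplus B)\cong\Lambda^{*}(A)\otimes\Lambda^{*}(B)$ and $\Gamma_{*}(A\oplus B)\cong\Gamma_{*}(A)\otimes\Gamma_{*}(B)$ as bigraded Hopf algebras, a direct inspection of the differential $d_{i}$ shows $C^{*}(A\oplus B)\cong C^{*}(A)\otimes C^{*}(B)$ as bigraded chain complexes (with differential $d_{A}\otimes 1\pm 1\otimes d_{B}$). Iterating gives $C^{*}(\mathbb{Z}^{k})\cong C^{*}(\mathbb{Z})^{\otimes k}$, and the case of a free group of infinite rank follows by a filtered colimit over finitely generated subgroups.

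For $A=\mathbb{Z}$ with generator $a$, $\Lambda^{i}(\mathbb{Z})=0$ for $i\geq 2$, so $C^{n}(\mathbb{Z})$ has at most two nonzero terms
\[
0\to \mathbb{Z}\cdot(a\otimes\gamma_{n-1}(a))\xrightarrow{d_{1}}\mathbb{Z}\cdot\gamma_{n}(a)\to 0,
\]
with $d_{1}(a\otimes\gamma_{n-1}(a))=-a\gamma_{n-1}(a)=-n\gamma_{n}(a)$. Hence $H_{0}C^{n}(\mathbb{Z})=\mathbb{Z}/n$ and $H_{i}C^{n}(\mathbb{Z})=0$ for $i\geq 1$. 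Since each $C^{N_{j}}(\mathbb{Z})$ is therefore quasi-isomorphic to $\mathbb{Z}/N_{j}$ placed in chain degree $0$, the Künneth theorem identifies $C^{N}(\mathbb{Z}^{k})$, up to quasi-isomorphism, with
\[
\bigoplus_{N_{1}+\cdots+N_{k}=N}\mathbb{Z}/N_{1}\stackrel{L}{\otimes}\cdots\stackrel{L}{\otimes}\mathbb{Z}/N_{k}.
\]

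For part (1), take $N=p$ prime. Each partition $(N_{1},\dots,N_{k})$ is of one of two types. Either exactly one $N_{j_{0}}=p$ and the others are zero, contributing $\mathbb{Z}/p$ in chain degree $0$; or at least two entries are positive, in which case $\gcd(N_{j})\mid p$ and $\gcd(N_{j})<p$, so $\gcd(N_{j})=1$. Localizing at each prime $q$, some $v_{q}(N_{j})=0$ so some factor vanishes $q$-locally, hence all iterated Tors vanish in every chain degree. Summing, $H_{0}C^{p}(\mathbb{Z}^{k})=(\mathbb{Z}/p)^{k}=A\otimes\mathbb{Z}/p$ and $H_{i}C^{p}(A)=0$ for $i\geq 1$. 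For part (2) with arbitrary $N=n$, the chain-degree-$0$ part of the Künneth decomposition gives $H_{0}C^{n}(\mathbb{Z}^{k})\cong\bigoplus_{\sum N_{j}=n}\mathbb{Z}/\gcd(N_{j})$. Splitting each cyclic summand into $p$-primary parts, only partitions with $p\mid N_{j}$ for all positive $N_{j}$ contribute to the $p$-primary piece; writing $N_{j}=ps_{j}$ with $\sum s_{j}=n/p$, the $p$-part of $\mathbb{Z}/\gcd(N_{j})$ is $\mathbb{Z}/p^{\min_{j}v_{p}(s_{j})+1}$, and this matches term-by-term the decomposition
\[
\Gamma_{n/p}((\mathbb{Z}/p)^{k})\cong\bigoplus_{s_{1}+\cdots+s_{k}=n/p}\bigotimes_{j}\Gamma_{s_{j}}(\mathbb{Z}/p)
\]
coming from the exponential formula for $\Gamma_{*}$ together with the identity $\Gamma_{r}(\mathbb{Z}/p)\cong\mathbb{Z}/p^{v_{p}(r)+1}$.

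The main obstacle is the careful combinatorial identification in part (2): while the heuristic that both sides satisfy the same exponential decomposition is transparent, the actual matching requires verifying the $p$-primary equality $\mathbb{Z}/p^{\min_{j}v_{p}(s_{j})+1}=\bigotimes_{j:s_{j}>0}\mathbb{Z}/p^{v_{p}(s_{j})+1}$ for every partition, and summing over primes $p\mid n$. A secondary technical point is tracking the Koszul signs in the isomorphism $C^{*}(A\oplus B)\cong C^{*}(A)\otimes C^{*}(B)$, but this is forced by the derivation property of $d$.
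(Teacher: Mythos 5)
There is a genuine gap, and it sits exactly where the statement has content: part (2) asserts a \emph{natural} isomorphism, but your argument never produces a natural map between the two functors. What you prove is that for each finitely generated free $A$ the groups $H_0C^n(A)$ and $\bigoplus_{p\mid n}\Gamma_{n/p}(A\otimes\mathbb Z/p)$ have matching $p$-primary decompositions after a basis of $A$ has been chosen; the ``term-by-term'' identification of $\mathbb Z/p^{\min_j v_p(s_j)+1}$ with $\bigotimes_j\Gamma_{s_j}(\mathbb Z/p)$ depends on that basis and on the (non-canonical) splitting of the K\"unneth/derived tensor product, so it only yields an abstract isomorphism of values, not a natural transformation. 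The proof this paper relies on (given in \cite{roman}) proceeds the other way around: one first \emph{defines} a natural map
$$
q_n:\ \Gamma_n(A)\longrightarrow\bigoplus_{p\mid n}\Gamma_{n/p}(A\otimes\mathbb Z/p),\qquad
\gamma_{i_1}(a_1)\cdots\gamma_{i_t}(a_t)\longmapsto\sum_{p\,\mid\, i_k\ \forall k}\gamma_{i_1/p}(\bar a_1)\cdots\gamma_{i_t/p}(\bar a_t),
$$
checks that it kills the image of $A\otimes\Gamma_{n-1}(A)$ so as to descend to $H_0C^n(A)$, and only then uses your kind of argument (the value on $\mathbb Z$ plus the exponential decomposition $C^n(A\oplus B)\simeq\bigoplus_{i+j=n}C^i(A)\otimes C^j(B)$ and K\"unneth on cross-effects) to see that it is an isomorphism. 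The hard step you are missing is the well-definedness of $q_n$ on the divided power algebra: compatibility with the relation $\gamma_s(x)\gamma_t(x)=\binom{s+t}{s}\gamma_{s+t}(x)$ forces a congruence of the type $\binom{pn}{pk}\equiv\binom{n}{k}\bmod p^r$ with $r$ controlled by $v_p(pnk(n-k))$, which is a genuinely nontrivial arithmetic input and cannot be recovered from the counting argument.

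The rest of your proposal is sound and agrees with the verification step of the actual proof: the rank-one computation $H_0C^n(\mathbb Z)=\mathbb Z/n$, $H_{>0}C^n(\mathbb Z)=0$ is correct (the differential is multiplication by $\pm n$ via $a\gamma_{n-1}(a)=n\gamma_n(a)$); the complexes $C^{N_j}(\mathbb Z)$ are free, so the tensor product does compute the derived tensor product; and for $n=p$ prime the gcd argument does kill every partition with two positive parts, which gives part (1) at the level of groups (here too, to get naturality you should exhibit the map $\Gamma_p(A)\to A\otimes\mathbb Z/p$, which is just the $n=p$ instance of $q_p$ and is easy). The colimit reduction to finite rank is fine. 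So the architecture is right, but the proposal proves a strictly weaker statement than the proposition until the natural map is constructed.
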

\noindent A  proof of proposition \ref{jean} (2) is given in
\cite{roman}.

\bigskip

The higher homology groups $H_iC^n(A)$ are more complicated. The
following table, which is a consequence of the main  theorem in \cite{roman},
gives a  complete description of
$H_iC^n(A)$ for $n\leq 7$ and
$A$   free abelian:
\begin{table}[H]
\begin{tabular}{ccccccccccccccc}
 $n$ & \vline & $H_0C^n(A)$ & $H_1C^n(A)$ & $H_2C^n(A)$ & $H_3C^n(A)$\\
 \hline
8 & \vline & $\Gamma_4(A\otimes \mathbb Z/2)$ & $ \ast$  & $\ast$ & $\ast$  \\
7 & \vline & $A\otimes \mathbb Z/7$ & 0 & 0 & 0\\
6 & \vline & $\Gamma_2(A\otimes \mathbb Z/3)\oplus \Gamma_3(A\otimes\mathbb Z/2)$ & $\Lambda^2(A\otimes \mathbb Z/3)\oplus \EuScript L^3(A\otimes \mathbb Z/2)$ & $\Lambda^3(A\otimes \mathbb Z/2)$ & 0\\
5 & \vline & $A\otimes \mathbb Z/5$ & 0 & 0 & 0\\
4 & \vline & $\Gamma_2(A\otimes \mathbb Z/2)$
& $\Lambda^2(A\otimes \mathbb Z/2)$ & 0&0\\
3 & \vline & $A\otimes \mathbb Z/3$ & 0 & 0&0\\
2 & \vline & $A\otimes \mathbb Z/2$ & 0&0&0\\
\end{tabular}
\caption{}
\end{table}
\noindent For example, the isomorphism
 \bee\label{c4iso}
f: \Lambda^2(A\otimes \mathbb Z/2)\to H_1C^4(A)\ee
is defined, for representatives $a,b\in A$ of $ \bar a,\bar b\in
A\otimes \mathbb Z/2  $, by
$$
f: \bar a\otimes \bar b\mapsto a\otimes a\gamma_2(b)-b\otimes
b\gamma_2(a).
$$

\subsection{Comparing  the de Rham and  Koszul complexes}
For any free abelian group $A$, consider the following natural
monomorphism of complexes:
\begin{equation}\label{compkos}
\xyma{Kos_n(A):\ar@{^{(}->}[d] & \Lambda^n(A) \ar@{=}[d]
\ar@{^{(}->}[r] & \Lambda^{n-1}(A)\otimes A \ar@{=}[d] \ar@{->}[r]
& \dots \ar@{->}[r]
& A\otimes SP^{n-1}(A) \ar@{->}[d] \ar@{->>}[r] & SP^n(A) \ar@{->}[d]\\
C^n(A): & \Lambda^n(A)\ar@{^{(}->}[r] & \Lambda^{n-1}(A)\otimes A \ar@{->}[r] & \dots \ar@{->}[r] & A\otimes \Gamma_{n-1}(A) \ar@{->}[r] & \Gamma_n(A)\\
}
\end{equation}
Let us denote the cokernel of this map by $D^n(A)$. We have
\begin{multline*}
D^n(A): \ \ \Lambda^{n-2}(A)\otimes W_2(A) \to
\Lambda^{n-3}(A)\otimes W_3(A) \to \dots\to A\otimes W_{n-1}(A)
\to W_{n}(A)
\end{multline*}
where
$$
W_n(A)={\rm coker}\{SP^n(A)\to \Gamma_n(A)\}
$$
Since Koszul complex is acyclic, it follows that
$$
H_iC^n(A)\simeq H_iD^n(A),\ n\geq 0.
$$
Proposition \ref{jean} implies that the sequence:
\begin{align}\label{w3}
0\to A\otimes A\otimes\mathbb Z/2 \to W_3(A)\to A\otimes \mathbb
Z/3\to 0
\end{align}
is exact (and  splits naturally).
 For every $n\geq 2,\ m\geq 0$, we obtain the natural exact
sequence:
\begin{align}
\label{lw3} 0\to LSP^n(A,m)\to L\Gamma_n(A,m)\to LW_n(A,m)\to 0
\end{align}
Passing to  homotopy groups and applying the d\'ecalage
isomorphisms (\ref{bq1}), (\ref{bq2}), this yields the  long exact
sequence
\begin{multline}\label{sequence}
\dots\to L_iSP^n(A,m)\to L_{i+2n}SP^n(A,m+2)\to L_iW_n(A,m)\to\\
L_{i-1}SP^n(A,m)\to L_{i+2n}SP^n(A,m+2)\to L_{i-1}W_n(A,m)\to
\dots
\end{multline}
Let $X$ be a free abelian simplicial group and $k\geq 1, n\geq 2$
be integers. If $\pi_i(X)=0,\ i<k,$ then by  \cite{DoldPuppe},
Satz 12.1
\begin{equation}\label{dp}
\pi_i(SP^n(X))=0,\begin{cases} \text{for}\ i<n,\ \text{when}\
k=1,\\
\text{for}\ i<k+2n-2,\ \text{provided}\ k>1.
\end{cases}
\end{equation}
 We will make use of exact sequence \eqref{sequence} and of the
 assertion \eqref{dp} in order  to  compute  derived functors
of polynomial functors of low degrees.

\section{Derived functors of quadratic functors}
\label{quad} For every abelian group $A$, the  exactness of the
sequence \eqref{exsp2} implies that  $W_2(A)\simeq A\otimes
\mathbb Z/2$. Since this functor is additive, it follows immediately
that
$$
L_iW_2(A,m)\simeq \begin{cases} A\otimes \mathbb Z/2,\ i=m\\
\Tor(A,\mathbb Z/2),\ i=m+1\\
\, 0,\quad  i \neq m, m+1
\end{cases}
$$
for all  $m$. Let us define a new  functor $\lam^2(A)$ by: \bee
\label{newfunct} \lambda^2(A):= \Lambda^2(A) \oplus \tor (A,\Z/2).
\ee  The long exact sequence (\ref{sequence}), the
connectivity result (\ref{dp}), and the d\'ecalage formulas
(\ref{bq1})and  (\ref{bq2}) produce the following  complete
description of the derived functors of the symmetric power
functor $SP^2$.
\begin{prop}{\rm
\label{derSP2}
\begin{align*}
& L_iSP^2(A,n)=\begin{cases} SP^2(A),\ i=0,\ n=0\\ \EuScript
S_2(A),\
  i=1,\ n=0\\
  \Lambda^2(A),\ i=2,\ n=1\\
A\otimes \mathbb Z/2,\ i=n+2,n+4,\dots, n+2[\frac{n-1}{2}]\\
\Tor(A,\mathbb Z/2),\ i=n+3,n+5,\dots, n+2[\frac{n-1}{2}]+1,\ i\neq 2n\\
 \Gamma_2(A),\ i=2n,\ n\neq 0\ \text{even}\\
\lambda^2(A)
,\ i=2n,\ n\neq 1\ \text{odd}\\
R_2(A),\ i=2n+1,\ n\neq 0\ \text{even}\\
\Omega_2(A),\ i=2n+1,\ n\ \text{odd}\\
0\ \text{for all other $i$.}
\end{cases}
\end{align*} }
\end{prop}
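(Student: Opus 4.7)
The plan is to establish the formula by induction on $n$, using the long exact sequence \eqref{sequence} with $n=2$, the connectivity bound \eqref{dp}, and the d\'ecalage isomorphisms \eqref{bq1} and \eqref{ddec}. The starting observation is that $W_2(A)\cong A\otimes \mathbb{Z}/2$ by \eqref{exsp2}, so the additive functor $W_2$ has $L_i W_2(A,m)$ vanishing outside $i=m$ (value $A\otimes \mathbb{Z}/2$) and $i=m+1$ (value $\Tor(A,\mathbb{Z}/2)$). Substituting into \eqref{sequence} yields, for every $m\geq 0$, a periodicity isomorphism
\[
L_i SP^2(A,m) \cong L_{i+4} SP^2(A,m+2) \quad \text{for } i\notin\{m-1,m,m+1\},
\]
together with a 6-term exact sequence controlling the three remaining degrees.

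The base cases $n=0,1,2$ are immediate: $n=0$ is classical; the cases $n=1$ and $n=2$ follow from the d\'ecalage isomorphisms \eqref{bq1} and \eqref{ddec} applied to $L_*\Lambda^2(A)$ and $L_*\Gamma_2(A)$, which vanish outside degrees $0$ and $1$ with respective values $(\Lambda^2,\Omega_2)$ and $(\Gamma_2,R_2)$. For $n\geq 3$, connectivity forces $L_i SP^2(A,n)=0$ for $i<n+2$, and the 6-term sequence with $m=n-2$ then pins down the first two nonvanishing terms as $L_{n+2} SP^2(A,n)=A\otimes \mathbb{Z}/2$ and $L_{n+3} SP^2(A,n)=\Tor(A,\mathbb{Z}/2)$. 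The alternating pattern in the middle range $n+4\leq i\leq 2n-1$ then propagates by iterating the periodicity isomorphism, while vanishing for $i>2n+1$ follows by iterating to a degree where the base case is zero. The top degrees $i=2n$ and $i=2n+1$ are reached for $n\geq 4$ by the same periodicity, reducing inductively to the base case of the relevant parity ($\Gamma_2$ or $\lambda^2$ at $i=2n$; $R_2$ or $\Omega_2$ at $i=2n+1$).

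The main obstacle is the case $n=3$ at $i=6$, where the periodicity isomorphism from $n=1$ fails because $i-4=2$ lies in the exceptional window $\{0,1,2\}$. Here the 6-term sequence produces instead a short exact sequence
\[
0 \longrightarrow \Lambda^2(A) \longrightarrow L_6 SP^2(A,3) \longrightarrow \Tor(A,\mathbb{Z}/2) \longrightarrow 0,
\]
and to identify $L_6 SP^2(A,3)$ with $\lambda^2(A)=\Lambda^2(A)\oplus \Tor(A,\mathbb{Z}/2)$ from \eqref{newfunct} one must establish that this sequence splits naturally. I would pursue this either by exhibiting a natural section using explicit cycles in the simplicial model $SP^2 K(P_\ast[3])$ for a flat resolution $P_\ast$ of $A$, or by a naturality argument reducing the possible extension class to a universal example where it is forced to vanish. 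Once secured, this splitting propagates through the periodicity isomorphism to all odd $n\geq 5$, closing the induction.
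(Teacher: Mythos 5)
Your skeleton coincides with the paper's: the long exact sequence \eqref{sequence} applied with $W_2(A)\simeq A\otimes\Z/2$, the connectivity bound \eqref{dp}, the d\'ecalage isomorphisms \eqref{bq1} and \eqref{ddec} for the base cases $n=1,2$, and the resulting four-fold periodicity $L_iSP^2(A,m)\simeq L_{i+4}SP^2(A,m+2)$ away from the exceptional window. You have also correctly located the one step that is not formal, namely the natural splitting of
\[
0\to\Lambda^2(A)\to L_6SP^2(A,3)\to\Tor(A,\Z/2)\to 0.
\]
But you leave precisely that step unresolved: you name two possible strategies without carrying either out, and the second one (reduction to a universal example) is genuinely problematic here, because $\Lambda^2$ vanishes on cyclic groups, so the cyclic test objects which settle extension problems of this shape elsewhere in the paper (compare the treatment of \eqref{bv12}) impose no constraint at all; one would have to control the extension on groups of rank at least $2$, which is exactly the content of the claim. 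This is therefore a real gap, not a routine verification.

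The paper closes it by a suspension argument. Rewrite the sequence via double d\'ecalage as $0\to L_2SP^2(A,1)\to L_2\Gamma_2(A,1)\to\Tor(A,\Z/2)\to 0$ and compare it, under the suspension maps, with the degree-zero sequence $0\to L_1SP^2(A)\to L_1\Gamma_2(A)\to\Tor(A,\Z/2)\to 0$ obtained by deriving \eqref{exsp2}. The left-hand suspension $L_1SP^2(A)\to L_2SP^2(A,1)$ is the zero map by \cite{DoldPuppe} corollary 6.6, since every element of $L_1SP^2(A)=\EuScript S_2(A)$ lies in the image of the epimorphism \eqref{epi} from $\Tor(A,A)$ and such decomposable classes die under suspension. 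The suspended sequence is therefore the pushout of the unsuspended one along the zero map, hence splits, and a diagram chase shows the splitting is natural in $A$. With this input your periodicity argument does propagate the splitting to all odd $n\geq 5$ as you claim; to make the proof complete you should either reproduce this suspension argument or actually exhibit the explicit natural section you allude to.
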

 We will only sketch  the
proof of this computation  in the present quadratic situation,
and  will discuss the more elaborate
case of cubical functors  in the following  section. These quadratic results were also
obtained   in  \cite{BauesPirashvili} \S 4  by a different method
(see also \cite{MPBook} \S A.15).

\bigskip

\noindent {\bf Proof:} The first two equations follow from the definitions. By
double d\'ecalage \eqref{ddec}, there is  an iterated isomorphism
\[ \Gamma_2(A) = L_0\Gamma_2(A,0) \simeq
L_4SP^2(A,2)\]
 which determines the sixth   equation above  for $n=2$. The general case of the sixth equation   then follows by induction when we consider the isomorphism $L_{2n}SP^2(A,n)  \simeq L_{2n+2}SP^2(A,n+2)$ from  \eqref{sequence} for $n$ even.      D\'ecalage also implies  that
\[ \Lambda^2A \simeq L_2SP^2(A, 1) \]
and  the sequence \eqref{lw3} then determines a short exact
sequence
\[  0 \to L_2SP^2(A,1) \to  L_2\Gamma_2(A,1) \to \text{Tor}(A,
\mathbb{Z}/2) \to 0\] Consider the following  diagram, in which
the vertical arrows are the suspension maps:
\[
\xymatrix{0 \ar[r]  & L_1SP^2A \ar[r]\ar[d]^{0}  & L_1\Gamma_2(A)
\ar[r] \ar[d]
 &\mathrm{Tor}(A,\mathbb{Z}/2)  \ar@{=}[d] \ar[r] & 0\\
0 \ar[r] & L_2SP^2(A,1) \ar[r] & L_2\Gamma_2(A,1) \ar[r] &
\mathrm{Tor}(A,\mathbb{Z}/2) \ar[r] & 0 }
\]
The left-hand vertical  arrow is trivial by \cite{DoldPuppe}
corollary 6.6, since all elements of $L_1SP^2A$ are in the image
of the arrow \eqref{epi} ($n=2)$. The lower sequence is therefore
split since it is a  pushout by the trivial map, and  a diagram chase
makes it clear
that this splitting is functorial. This proves
the seventh equation in  proposition \eqref{derSP2} for $n=3$  by the  double
d\'ecalage isomorphism
\[ L_2\Gamma_2(A,1) \simeq L_6SP^2(A,3) \]
 The general case of the sixth  equation now  follows by induction since \eqref{lw3} and d\'ecalage imply that
\[ L_iSP^2(A,n) \simeq L_i\Gamma_2(A,n) \simeq L_{i+4}SP^2(A, n+2) \]
for all $n$.  A similar discussion,  in the next degree, shows that
the seventh
and eight equations are also satisfied. The remaining  fourth and
fifth equations  are
proved by considering once more the sequence \eqref{sequence}, and
observing that the
 functors  $L_iSP^2(A,n)$  vanish by
 \cite{DoldPuppe} whenever $i$ is  sufficiently large.
\qed

\bigskip

As a corollary, one finds  that
this computation (and even the  inductive
 reasoning that led to it) can be carried over by the d\'ecalage isomorphisms
 \eqref{bq1}, \eqref{ddec}
 to the derived functors of $\Lambda^2$ and
 $\Gamma_2$.
 We simply state the result:
 \begin{align} \hspace{-1.3cm}  \label{derlam2ga2}
& L_i\Lambda^2(A,n)=\ \ \begin{cases}  \Lambda^2(A),\ i=0,\ n=0,\\
A\otimes \mathbb Z/2,\ i=n+1,n+3\dots, n+2[\frac{n-1}{2}]+1,\
i\neq
2n \\
\Tor(A,\mathbb Z/2),\ i=n+2,n+4,\dots, n+2[\frac{n-1}{2}]\\
\Gamma_2(A),\ i=2n,\ n\ \text{odd}\\
\lam^2(A)
,\ i=2n,\ n\neq 0\ \text{even}\\
R_2(A),\ i=2n+1,\ n\ \text{odd}\\
\Omega_2(A),\ i=2n+1,\ n\ \text{even}\\
0\ \text{for all other $i$.}
\end{cases}
\end{align}
\begin{align}
\label{derga2} & L_i\Gamma_2(A,n)=\ \ \begin{cases}
A\otimes \mathbb Z/2,\ i=n, n+2,\dots, n+2[\frac{n-1}{2}],\ n>0\\
\Tor(A,\mathbb Z/2),\ i=n+1, n+3,\dots, n+2[\frac{n-1}{2}]+1,\ n>0,\ i\neq 2n\\
\Gamma_2(A),\ i=2n,\ n\ \text{even}\\
\lam^2(A)
,\ i=2n,\ n\ \text{odd}\\
R_2(A),\ i=2n+1,\ n\ \text{even}\\
\Omega_2(A),\ i=2n+1,\ n\ \text{odd}\\
0\ \text{for all other $i$.}
\end{cases}
\end{align}

\section{The derived functors of certain  cubical functors}
\label{cub}  It follows from  (\ref{w3})  that
\bee
\label{defw3}
W_3(A)=(A\otimes A \otimes \mathbb{Z}/2)\oplus (A \otimes
\mathbb{Z}/3),
\ee
 and this derives to  an  isomorphism
\bee
\label{deflw3}
LW_3(A) \simeq (A \lot A \lot \mathbb{Z}/2) \oplus( A \lot \mathbb{Z}/3)
 \ee
in the derived category, from which the values of $L_iW_3(A)$ follow
immediately (consistently with the two equations  \eqref{l1ga2}).
 This implies  that
\begin{equation}\label{wf1}
L_iW_3(A,1)=\ \,\,\,\begin{cases} A\otimes \mathbb Z/3,\ i=1\\
A\otimes A\otimes \mathbb Z/2\oplus \Tor(A,\mathbb Z/3),\ i=2\\
\Tor_1(A,A,\mathbb Z/2),\ i=3\\
\Tor_2(A,A,\mathbb Z/2),\ i=4\\
0,\ \text{for all other $i$}
\end{cases}
\end{equation}
and, for $n>1$:
\begin{equation}\label{wf2}
\hspace{-.7cm} L_iW_3(A,n)=\begin{cases} A\otimes \mathbb Z/3,\ i=n\\
\Tor(A,\mathbb Z/3),\ i=n+1\\
A\otimes A\otimes \mathbb Z/2,\ i=2n,\\
\Tor_1(A,A,\mathbb Z/2),\ i=2n+1,\\
\Tor_2(A,A,\mathbb Z/2),\ i=2n+2,\\
0,\ \text{for all other  $i$.}
\end{cases}
\end{equation}

\bigskip

 We will now use this  computation in order to determine the
derived functors of $SP^3$ in all degrees.  Let $X$ be a free
abelian simplicial group. The natural map of simplicial groups
$$
E: SP^3(X)\otimes K(\mathbb Z,2)\to SP^3(X\otimes K(\mathbb Z,2))
$$
induces the pairing map \cite{Bou}:
$$
\epsilon_3: \pi_iSP^3(X)\otimes H_6K(\mathbb Z,2)\to
\pi_{i+6}SP^3(X\otimes K(\mathbb Z,2))
$$
Observe that the following diagram is commutative:
$$
\xyma{\pi_iSP^3(X) \ar@{->}[d] \ar@{->}[r]^{\epsilon_3\ \ \ \ \ \
\ \ \ \ } &
\pi_{i+6}SP^3(X\otimes K(\mathbb Z,2))\\
\pi_i\Gamma_3(X) \ar@{->}[r]^{\simeq\ \ \ \ \ \ \ \ \ \ \ \ } &
\pi_{i+6}(\Gamma_3(X)\otimes SP^3K(\mathbb Z,2)) \ar@{->}[u]}
$$
where the right hand vertical arrow is the homomorphism
\eqref{pen12}.
  It follows from
  \eqref{sequence}, \eqref{wf1} and \eqref{wf2} that the maps
\begin{equation}
\epsilon_3: L_iSP^3(A,n)\to L_{i+6}SP^3(A,n+2)\notag
\end{equation}
are isomorphisms for $,\ i\neq
n-1,n,n+1,n+2,2n-1,2n,2n+1,2n+2$. In addition the sequence
\begin{multline}\label{exactbou} 0\to L_{2n+2}SP^3(A,n)\to
L_{2n+8}SP^3(A,n+2)\to \Tor_2(A,A,\mathbb Z/2)\to\\
L_{2n+1}SP^3(A,n)\to
L_{2n+7}SP^3(A,n+2)\to \Tor_1(A,A,\mathbb Z/2)\to \\
L_{2n}SP^3(A,n)\to
L_{2n+6}SP^3(A,n+2)\to A\otimes A\otimes \mathbb Z/2\to \\
L_{2n-1}SP^3(A,n)\to L_{2n+5}SP^3(A,n+2)
\end{multline}
is exact by \eqref{sequence}. Furthermore, for $n>1$,
\begin{align*}
& L_{n+7}SP^3(A,n+2)\simeq \Tor(A,\mathbb Z/3)\\
& L_{n+6}SP^3(A,n+2)\simeq A\otimes \mathbb Z/3
\end{align*}
by (\ref{dp}). Finally, according to \cite{Bou} corollary 4.3 ,
the
 pension maps
$$
\varepsilon_3: L_iSP^3(A,n)\to L_{i+6}SP^3(A,n+2)
$$
are  split injections,   for all $i\geq 0$ and all  $n>1$.  The
long  exact sequence (\ref{exactbou}) therefore decomposes for $n>1$  into
short exact sequences:
\begin{align*}
0 \to &L_{2n+2}SP^3(A,n) \to  L_{2n+8}SP^3K(A,n+2) \to
\Tor_2(A,A,\mathbb
Z/2)  \to 0\\
0 \to & L_{2n+1}SP^3(A,n) \to  L_{2n+7}SP^3K(A,n+2)\to
\Tor_1(A,A,\mathbb
Z/2)\to 0\\
0 \to  &L_{2n}SP^3(A,n) \to  L_{2n+6}SP^3(A,n+2)\to A\otimes
A\otimes \mathbb Z/2  \to 0
\end{align*}
and an isomorphism
$$L_{2n+5}SP^3(A,n+2)\simeq  L_{2n-1}SP^3(A,n). $$
\begin{example}
\label{sp3a3} {\rm Since the values taken by the derived functors of
  $W_3$ in  \eqref{wf1} and \eqref{wf2}
are distinct, we must   consider the implications of\eqref{wf1}
separately.
Observe that
exact sequence (\ref{sequence}) and the equations \eqref{wf1} imply that
\begin{align*}
& L_{11}SP^3(A,3)=\Omega_3(A),\\
& L_8SP^3(A,3)=A\otimes A\otimes \mathbb Z/2\oplus \Tor(A,\mathbb
Z/3)
,\\ & L_7SP^3(A,3)=A\otimes \mathbb Z/3,\\
\end{align*}
and that the groups $L_iSP^3(A,3)$ for $i= 9,10$ live in the long  exact
sequence \begin{multline}\label{zzzzz1} 0\to L_1\Lambda^3(A)\to
L_{10}SP^3(A,3)\to \Tor_2(A,A,\mathbb Z/2)\buildrel{\partial}\over\to\\
\Lambda^3(A)\buildrel{\varepsilon_3}\over\to L_9SP^3(A,3)\to
\Tor_1(A,A,\mathbb Z/2)\to 0\,.
\end{multline}
The diagram
\[ \xymatrix{
\Lam^3(A)  \ar@{^{(}->}[rr]^{g_3} \ar[d]^{\wr} &&  \ot^{3}(A) \ar[d]^\wr  \\
  L_3SP^3(A,1) \ar[r] & L_3\Ga_3(A,1) \ar[r]^{h_3}  & L_3\ot^3(A,1)
}\]
commutes, where  the left-hand vertical arrow is the map
\eqref{bq1}, and the
right-hand one the corresponding obvious d\'ecalage map for tensor powers. It follows
that the composite map
\[ \epsilon_3: \Lambda^3A \la     L_3\Ga_3(A,1) \simeq L_9SP^3(A,3) \]
is injective so that
 the boundary map $\partial$  in (\ref{zzzzz1})
is trivial. The complete description of the $L_iSP^3(A,3)$  is
therefore given by
\begin{align*}
& L_iSP^3(A,3)=\begin{cases} \Omega_3(A),\ i=11\\
A\otimes A\otimes \mathbb Z/2\oplus \Tor(A,\mathbb Z/3),\ i=8\\
A\otimes \mathbb Z/3,\ i=7\\
0,\ i\neq 7,8,9,10,11
\end{cases}
\end{align*}
and the exactness of the sequences
\begin{align*}
& 0\to L_1\Lambda^3(A)\to L_{10}SP^3(A,3)\to \Tor_2(A,A,\mathbb Z/2)\to 0\\
& 0\to \Lambda^3(A)\to L_9SP^3(A,3)\to \Tor_1(A,A,\mathbb Z/2)\to
0\,.
\end{align*}
}
\end{example}

The discussion  in example \ref{sp3a3} does not only apply to the
derived functors $L_iSP^3(A,n)$ with $n=3$. The corresponding
assertion  for a
general $n$
is the following one:

\begin{theorem}
\label{sp3an} {\rm \noindent Case I: $n\geq 3$ is odd.
$$
L_iSP^3(A,n)=\begin{cases}
A\otimes \mathbb Z/3,\ n+4\leq i<2n+2,\ i-n\equiv 0\mod 4\\
\Tor(A,\mathbb Z/3),\ n+4\leq i<2n+2,\ i-n\equiv 1\mod 4\\
A\otimes A\otimes \mathbb Z/2,\ i=2n+2,\ n\equiv 1\mod 4,
\\ \Tor(A,\mathbb Z/3)\oplus
A\otimes A\otimes \mathbb Z/2,\ i=2n+2,\ n\equiv 3\mod 4\\
\Tor_1(A,A,\mathbb Z/2),\ 2n+3\leq i\leq 3n-2,\ i-n\equiv 2\mod
4,\\ A\otimes \mathbb Z/3\oplus \Tor_1(A,A,\mathbb Z/2),\ 2n+3\leq
i\leq 3n-2,\ i-n\equiv 0 \mod 4,\\
\Omega_3(A),\ i=3n+2,\\
 0,\
\text{for all other $i$.}
\end{cases}
$$
In addition, the folllowing sequences are exact:
\begin{align*}
& 0\to \Tor(A,\mathbb Z/3)\oplus A\otimes A\otimes \mathbb Z/2\to
L_iSP^3(A,3)\to \Tor_2(A,A,\mathbb Z/2)\to 0,\\ & \ \ \ \ \ \ \ \
\ \ \ \ \ \ \ \ \ \ \ \ \ \ \ 2n+4\leq
i\leq 3n-1,\ i-n\equiv 1\mod 4\\
& 0\to A\otimes A\otimes \mathbb Z/2\to L_iSP^3(A,n)\to
\Tor_2(A,A,\mathbb Z/2)\to 0,\\ & \ \ \ \ \ \ \ \ \ \ \ \ \ \ \ \
\ \ \ \ \ \ \ 2n+3\leq i\leq 3n-1,\ i-n\equiv
3 \mod 4,\\ & 0\to L_1\Lambda^3(A)\to L_{3n+1}SP^3(A,n)\to \Tor_2(A,A,\mathbb
Z/2)\to 0\\
& 0\to \Lambda^3(A)\to L_{3n}SP^3(A,n)\to \Tor_1(A,A,\mathbb Z/2)\to 0.
\end{align*}
Case II: $n>3$ is even.
$$
L_iSP^3(A,n)=\begin{cases} A\otimes \mathbb Z/3,\ n+4\leq i<2n+2,\
i-n\equiv 0\mod 4
\\
\Tor(A,\mathbb Z/3),\ n+4\leq i<2n+2,\ i-n\equiv 1\mod 4
\\
A\otimes A\otimes \mathbb Z/2,\ i=2n+2,\ n\equiv 0\mod 4,\\
 A\otimes \mathbb Z/3\oplus A\otimes A\otimes \mathbb Z/2,\ i=2n+2,\
 n\equiv 2\mod 4,\\
\Tor(A,\mathbb Z/3)\oplus \Tor_1(A,A,\mathbb Z/2),\ 2n+3\leq i\leq
3n-1,\ i-n\equiv 1\mod 4,\\ \Tor_1(A,A,\mathbb Z/2),\ 2n+3\leq
i\leq 3n-1,\ i-n\equiv 3\mod 4,\\
L_1\Gamma_3(A),\ i=3n+1, \\
 R_3(A),\ i=3n+2,\\
 0,\
\text{for all other $i$.}
\end{cases}
$$
In addition, the following sequences are exact:
\begin{align*}
& 0\to A\otimes \mathbb Z/3\oplus A\otimes A\otimes \mathbb Z/2\to
L_iSP^3(A,n)\to \Tor_2(A,A,\mathbb Z/2)\to 0,\\ & \ \ \ \ \ \ \ \
\ \ \ \ \ \ \ \ \ \ \ \ \ \ \ 2n+4\leq i\leq 3n-2,\ i-n\equiv
0\mod
4 \\
& 0\to A\otimes A\otimes \mathbb Z/2\to L_iSP^3(A,n)\to
\Tor_2(A,A,\mathbb Z/2)\to 0,\\ & \ \ \ \ \ \ \ \ \ \ \ \ \ \ \ \
\ \ \ \ \ \ \ 2n+4\leq i\leq 3n-2,\
i-n\equiv 2\mod 4,\\
& 0\to \Gamma_3(A)\to L_{3n}SP^3(A,n)\to \Tor_2(A,A,\mathbb
Z/2)\to 0.
\end{align*}
}\end{theorem}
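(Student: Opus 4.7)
The plan is to prove Theorem \ref{sp3an} by induction on $n$, stepping by $2$ and keeping the two parity cases in parallel. The base case $n = 3$ of Case I is Example \ref{sp3a3}. The base case $n = 4$ of Case II is to be established in the same spirit, by applying the exact sequence \eqref{sequence} at $m = 2$, itself fed by a preliminary determination of $L_iSP^3(A,2)$ obtained from \eqref{sequence} at $m = 0$, the known values of $L_iSP^3(A)$ recorded in \eqref{syder}, and the canonical splitting $LW_3(A) \simeq (A \lot A \lot \mathbb Z/2) \oplus (A \lot \mathbb Z/3)$ of \eqref{deflw3}.

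The engine of the inductive step, already used for $n = 3$, is \cite{Bou} Corollary 4.3: for every $n > 1$, the pension map
\begin{equation*}
\varepsilon_3 : L_iSP^3(A,n) \to L_{i+6}SP^3(A,n+2)
\end{equation*}
is a split injection. Consequently the long exact sequence \eqref{sequence} decomposes into short exact sequences
\begin{equation*}
0 \to L_iSP^3(A,n) \to L_{i+6}SP^3(A,n+2) \to L_iW_3(A,n) \to 0,
\end{equation*}
and combined with the explicit tables \eqref{wf1}, \eqref{wf2} for $L_iW_3(A,n)$ this determines each $L_{i+6}SP^3(A,n+2)$ from the inductive values at level $n$. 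The low-dimensional terms $L_jSP^3(A,n+2)$ with $j < n+6$ vanish by the Dold-Puppe connectivity estimate \eqref{dp}, which matches the lower bound $i \geq n + 6 = (n+2) + 4$ in the statement.

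Translating the recursion into the tables is essentially bookkeeping. Passing from level $n$ to level $n+2$ increases $i$ by $6$ and $n$ by $2$, so $i - n$ increases by $4$ and its residue modulo $4$ is preserved; this accounts for the periodic patterns $i - n \equiv 0, 1, 2, 3 \pmod 4$ stated in the theorem. The five types of contributions to $L_iW_3(A,n)$, namely $A \otimes \mathbb Z/3$, $\Tor(A, \mathbb Z/3)$, $A \otimes A \otimes \mathbb Z/2$, $\Tor_1(A,A,\mathbb Z/2)$ and $\Tor_2(A,A,\mathbb Z/2)$, slot into exactly the ``new'' positions at level $n+2$ and account for every direct summand appearing at those positions. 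The non-split extensions at $i = 3n, 3n+1$ in Case I and at $i = 3n$ in Case II are inherited verbatim from the base cases: one checks from \eqref{wf2} that $L_iW_3(A,m) = 0$ in these degrees once $m > 2$, so the pension transports the extension along unchanged.

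The main obstacle is the base case $n = 4$, and in particular the identification of the non-split extension $0 \to \Gamma_3(A) \to L_{12}SP^3(A,4) \to \Tor_2(A, A, \mathbb Z/2) \to 0$. This is to be established by combining the d\'ecalage isomorphism $L_i\Gamma_3(A,n) \simeq L_{i+3}\Lambda^3(A,n+1)$ with the short exact sequences \eqref{l1ga2} relating $L_i\Gamma_3$ to $L_iSP^3$, together with a diagram chase analogous to the one used in Example \ref{sp3a3} to place $\Lambda^3(A)$ inside $L_9SP^3(A,3)$, via the compatibility of the pension with the natural transformations $g_n$ on $\Lambda^n$ and $h_n$ on $\Gamma_n$ of \eqref{rrref}. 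A secondary point to verify is that the splitting afforded by \cite{Bou} is natural enough for the ``double'' summands such as $A \otimes A \otimes \mathbb Z/2 \oplus \Tor(A, \mathbb Z/3)$ to be genuinely canonical, which follows at once from the canonical splitting of $LW_3$ recorded in \eqref{deflw3}.
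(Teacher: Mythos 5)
Your proposal is correct and follows essentially the same route as the paper: the long exact sequence \eqref{sequence} combined with the values \eqref{wf1}, \eqref{wf2} of $L_iW_3(A,n)$, the connectivity bound \eqref{dp}, and the split injectivity of the pension $\varepsilon_3$ from \cite{Bou} yield the same short exact sequences and isomorphisms that the paper uses to propagate Example \ref{sp3a3} (odd case) and the $n=2$ values $L_{i+6}SP^3(A,2)\simeq L_i\Gamma_3(A)$ given by double d\'ecalage and \eqref{l1ga2} (even case) up to general $n$. One small remark: since $\varepsilon_3$ is a \emph{split} injection, the residual extensions you label ``non-split'' are in fact (unnaturally) split, and in particular the sequence $0\to\Gamma_3(A)\to L_{12}SP^3(A,4)\to\Tor_2(A,A,\mathbb Z/2)\to 0$ requires no extra diagram chase beyond the identification $L_6SP^3(A,2)\simeq\Gamma_3(A)$.
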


 \bigskip

The corresponding  description of the derived functors of
$\Lambda^3$ and $\Gamma_3$ now follow by  the d\'ecalage
isomorphisms (\ref{bq1}), (\ref{bq2}).

\section{Some derived functors of $SP^4$.}
\vspace{.5cm} We will now make use of the computation of the
homology of the dual de Rham complex $C^4(A)$ in proposition   \ref{jean} in order to
investigate some of  the derived functors of $SP^4$. For $A$  a
free abelian group, we now consider the following diagram with
exact rows and columns, which extends diagram \eqref{compkos}
when  $n=4$:
$$
\xymatrix@R=7pt{\Lambda^4(A) \ar@{=}[d] \ar@{^{(}->}[r] &
\Lambda^3(A)\otimes A \ar@{=}[d] \ar@{->}[r] & \Lambda^2(A)\otimes
SP^2(A) \ar@{^{(}->}[d] \ar@{->}[r]
& A\otimes SP^3(A) \ar@{^{(}->}[d] \ar@{->>}[r] & SP^4(A) \ar@{^{(}->}[d]\\
\Lambda^4(A)\ar@{^{(}->}[r] & \Lambda^{3}(A)\otimes A \ar@{->}[r]
& \Lambda^2(A)\otimes \Gamma_2(A) \ar@{->>}[d] \ar@{->}[r] &
A\otimes \Gamma_3(A)
\ar@{->}[r] \ar@{->>}[d] & \Gamma_4(A) \ar@{->>}[d]\\
& & \Lambda^2(A)\otimes A\otimes\mathbb Z/2
\ar@{->}[r]^(.57)\lambda & A\otimes W_3(A) \ar@{->}[r] & W_4(A)}
$$
By proposition \ref{jean}, this determines a  functorial  diagram of exact sequences
\begin{equation}
\label{diag-w4}
\xymatrix@R=7pt{& SP^2(A)\otimes A\otimes \mathbb Z/2 \ar@{^{(}->}[d]\\
H_1D^4(A) \ar@{^{(}->}[r] & (A\otimes W_3(A))/\mathrm{im}(\lambda)
\ar@{->>}[d] \ar@{->}[r] & W_4(A)
\ar@{->>}[r] & \Gamma_2(A\otimes \mathbb Z/2)\\
& A\otimes A\otimes \mathbb Z/3}
\end{equation}
The map (\ref{c4iso}) defines  canonical isomorphisms
$$
H_1D^4(A)\simeq H_1C^4(A)\simeq \Lambda^2(A\otimes \mathbb
Z/2)\simeq L_1SP^2(A\otimes \mathbb Z/2).
$$
Let us define a  map
$$
\delta: \Lambda^2(A\otimes \mathbb Z/2)\to SP^2(A)\otimes A\otimes
\mathbb Z/2\ (\subset (A\otimes W_3(A))/\mathrm{im}(\lambda))
$$
  as follows, where $a, b$ are representatives in $A$ of the classes
  $\bar{a}$ and $\bar{b}$:
$$
\delta: \bar a\wedge \bar b\mapsto aa\otimes \bar b-bb\otimes \bar
a,\ a,b\in A,\ \bar a,\bar b\in A\otimes \mathbb Z/2.
$$
It follows  from this discussion that diagram  \eqref{diag-w4}
induces a short exact sequence
$$
0\to \frac{SP^2(A)\otimes A\otimes \mathbb Z/2}{\Lambda^2(A\otimes
\mathbb Z/2)}\oplus A\otimes A\otimes \mathbb Z/3\to W_4(A)\to
\Gamma_2(A\otimes \mathbb Z/2)\to 0.
$$
The filtration on $\Gamma_4(A)$  provided by this description of
$W_4(A)$ is consistent with that in \cite{Jean} proposition 3.1.2.
Together with long exact sequence (\ref{sequence}), it allows one
to  compute derived functors of the functor $SP^4$ by comparing them
to those of $\Gamma_4$ and taking into account the double d\'ecalage. For example,
one finds:
\begin{align*}
& L_9SP^4(A,3)\simeq A\otimes \mathbb Z/2,\\
& L_{10}SP^4(A,3)\simeq A\otimes A\otimes \mathbb Z/3\oplus
\Lambda^2(A\otimes \mathbb Z/2)\oplus \Tor(A,\mathbb Z/2
),\\
& L_{10}SP^4(A,4)\simeq A\otimes \mathbb Z/2,
\\
& L_{11}SP^4(A,4)\simeq \Tor(A,\mathbb Z/2
).
\end{align*}

\section{Lie and super-Lie functors}
We will now  consider the structure theory of Lie and super-Lie
functors.
\subsection{The third Lie functor}
\label{subsection-3lie} For any free abelian group $A$,  consider
the Koszul resolution:
\begin{equation}
\label{im-f} 0\to \Lambda^3(A)\to \Lambda^2(A)\otimes
A\buildrel{f}\over\rightarrow A\otimes SP^2(A)\to SP^3(A)\to 0,
\end{equation}
in which  the map $f$ is defined by
$$
f: a \wedge b\otimes c\mapsto a\otimes bc-b\otimes a c,\ a,b,c\in
A
$$
It  decomposes as
\begin{equation}\label{l3d}
\xyma{\Lambda^2(A)\otimes A \ar@{->}[r]^u & A\otimes A\otimes A
\ar@{->}[r]^v & A\otimes SP^2(A) }\end{equation} where
\begin{align}
\label{descr-uv} & u: a\wedge b\otimes c\mapsto a\otimes b\otimes
c-b\otimes a\otimes c -c\otimes a\otimes b+c\otimes b\otimes a,\\
\notag & v: a\otimes b\otimes c\mapsto a\otimes bc,\ a,b,c\in A.
\end{align}
Since  the expressions
$ u(a\wedge b \otimes  c$
 generate
$\EuScript L^3(A)$, the long exact sequence  \eqref{im-f}  decomposes as a
pair of short exact sequences
\begin{align}
\label{seq1}
&0\to \Lambda^3(A)\to \Lambda^2(A)\otimes A\to \EuScript L^3(A)\to 0\\
\label{sp3and} & 0\to \EuScript L^3(A)\buildrel{p_3}\over\to
A\otimes SP^2(A)\to
SP^3(A)\to 0\,,
\end{align}
In  particular   the map
\[ \xymatrix@R=3.5pt{
\EuScript L^3(A) \ar@{->>}[r]^{p_3} & J^3(A) \\
[a,b,c] \ar@{|->}[r] & (a,b,c) }\]
induced by  $p_3$ \eqref{defpn} is an isomorphism.

\bigskip

\begin{remark} \label{eliec}  {\rm {\it i})
The sequences \eqref{seq1} and \eqref{sp3and}  both remain exact
for an arbitrary group $A$. Indeed, \eqref{sp3and} derives for any
$A$ to  long exact sequences, and the arrow
\[\pi_1\left(A
\stackrel{L}{\otimes} LSP^2A \right) \la L_1SP^3A \] is
surjective, as follows from the presentation \eqref{syder} of
$L_1SP^3A$.

\medskip

\hspace{2cm} {\it ii}) There is a natural isomorphism
\bee
\label{natural}
\EuScript L^3(A)\simeq E^3(A):=\ker\{\Gamma_2(A)\otimes A\to
\Gamma_3(A)\}\,,
\ee
as  follows from the following prolongation of part of diagram \eqref{compkos}
for $n=3$:
\bee
\label{prolong}
\xymatrix@R=7pt{& \pi_1\left(A\buildrel{L}\over\otimes
A\buildrel{L}\over\otimes \mathbb Z/2 \right) \ar@{^{(}->}[r]
\ar@{->}[d] & \pi_1\left(A\buildrel{L}\over\otimes
A\buildrel{L}\over\otimes
\mathbb Z/2 \right)\oplus \Tor(A,\mathbb Z/3)\ar@{->}[d]\\
\EuScript L^3(A) \ar@{^{(}->}[r] \ar@{->}[d] & SP^2(A)\otimes A
\ar@{->>}[r] \ar@{->}[d]&
SP^3(A) \ar@{->}[d] \\
E^3(A) \ar@{^{(}->}[r] & \Gamma_2(A)\otimes A \ar@{->}[r]
\ar@{->>}[d] &
\Gamma_3(A) \ar@{->>}[d] \ar@{->>}[r] & A\otimes \mathbb Z/3\ar@{=}[d]\\
& A\otimes A\otimes \mathbb Z/2 \ar@{->}[r] & (A\otimes A\otimes
\mathbb Z/2)\oplus A\otimes \mathbb Z/3 \ar@{->}[r] & A\otimes
\mathbb Z/3}\ee
}
\end{remark}

\subsection{The Curtis decomposition} We will now  consider higher Lie
functors. Curtis gave in  \cite{Curtis:63} (see also
\cite{MPBook}) a decomposition of the functors $\EuScript L^n (A)$
into functors $SP^n$, $J^n$ and their iterates.
For example, when $A$ is a free abelian group  we have  the
following  decompositions in low degrees:
\begin{align}
\label{curdec}
\EuScript L^1(A)=& A\\
\EuScript L^2(A)=& J^2(A)   \notag\\
\EuScript L^3(A)=& J^3(A)\notag  \\
\EuScript L^4(A)=& J^2(J^2(A))\oplus J^4(A)  \notag \\
\EuScript L^5(A)=& (J^3(A)\otimes J^2(A))\oplus J^5(A) \notag\\
\EuScript L^6(A)=& J^3(J^2(A))\oplus J^2(J^3(A))\oplus
(J^4(A)\otimes J^2(A))\oplus J^6(A) \notag  \\
\EuScript L^7(A)=& (J^3(A)\otimes SP^2(J^2(A)))\oplus
(J^5(A)\otimes J^2(A))\oplus (J^2(J^2(A))\otimes J^3(A))\notag
\\ &
\oplus (J^4(A)\otimes J^3(A))\oplus J^7(A) \notag   \\
\EuScript L^8(A)=& J^2(J^2(J^2(A)))\oplus J^2J^4\oplus
(J^3(A)\otimes J^2(A)\otimes J^3(A))\notag    \\ & \oplus
J^5(A)\otimes J^3(A)\oplus J^4(J^2(A))\oplus (J^4(A)\otimes
SP^2(J^2(A))) \notag   \\ &\oplus (J^6(A)\otimes J^2(A))\oplus
J^8(A)\notag
\\
 & \dots  \notag
\end{align}

We will refer to these descriptions of the Lie functors  as their {\it Curtis decompositions}. It should be understood that the splittings  into direct sums displayed here  are not functorial, and
that all that exists functorially   are  filtrations  of
the $\EuScript L^n(A)$, whose associated graded  components are
the expressions displayed.  As a matter of convenience, we  will nevertheless
 refer to  these expressions
 as summands of the Lie functors. We have already come across the cases
$n=2,3$ of these decompositions (prop. \ref{slez}).
The next two  cases are the short exact sequences
\begin{align}
\label{curtis4} 0\to &\Lambda^2\Lambda^2(A)\to \EuScript  L^4(A)
\stackrel{p_4}{\to} J^4(A) \to 0\\\label{curtis5}
 0\to &\Lambda^2(A)\otimes J^3(A)\to \EuScript L^5(A)    \stackrel{p_5}{\to}  J^5(A)
\to 0,
\end{align}
where the left-hand arrows are respectively defined by
\begin{align*}
(a\wedge b) \wedge (c\wedge d) &\mapsto [[a,b],[c,d]] \\
(a\wedge b) \otimes (c,d,e) &\mapsto [[a,b],[c,d,e]]\,.
\end{align*}
 It is a general fact that   the final term in the decompostion of
$\EuScript L^n(A)$ is always $J^n(A)$, the projection  of $
\EuScript L^n(A)$ onto $J^n(A)$ being the map $p_n$ \eqref{defpn}.

\subsection{Super-Lie functors}
We will now  define  super-Lie functors
\begin{equation}\label{slie}
\EuScript L_{s}^n: {\sf Ab\to Ab},\ n\geq 1\,.
\end{equation}

\begin{defi} \cite{Leibowitz}
A {\it graded Lie ring with squares}  (GLRS for short) is a graded
abelian group $B=\bigoplus_{i=0}^\infty B_i$ with homomorphisms
\begin{align} & \{\ ,\ \}: B_i\otimes B_j\to B_{i+j},\ \label{superbracket}\\ & ^{[2]}: B_n\to
B_{2n}\ \text{for}\ n\ \text{odd}
\end{align}
such that the following conditions are satisfied (for elements
$x\in B_i,\ y\in B_j,\ z\in B_k$):
\begin{align}
& 1)\ \{x,y\}+(-1)^{ij}\{y,x\}=0 \label{santi}\\
& 2)\ \{x,x\}=0\ \ \text{for}\ i\ \text{even} \notag\\
& 3)\
(-1)^{ik}\{\{x,y\},z\}+(-1)^{ji}\{\{y,z\},x\}+(-1)^{kj}\{\{z,x\},y\}=0\label{sjacob}\\
& 4)\ \{x,x,x\}=0 \notag \\
& 5)\ (ax)^{[2]}=a^2x^{[2]}\ \text{for}\ \ i\ \text{odd},\ a\in
\mathbb Z  \notag   \\
& 6)\ (x+y)^{[2]}=x^{[2]}+y^{[2]}+\{x,y\}\ \text{for}\ \ i=j\
\text{odd}  \notag  \\
& 7)\ \{y,x^{[2]}\}=\{y,x,x\}\ \ \text{for}\ i\
\text{odd}.\label{yxx}
\end{align}
\end{defi}
For an abelian group $A$, define $\EuScript L_s(A)$ to be the
graded Lie ring with squares freely generated by $A$ in degree 1.
It may be defined as a GLRS together with a homomorphism of
abelian groups $l: A\to \EuScript L_s(A)$ such that for every map
$f: A\to B$ with $B$ a GLRS, there is a unique morphism of GLRS
$d: \EuScript L_s(A)\to B$ such that $f=d\circ l$. The abelian
group $\EuScript L_s(A)$ is naturally graded by  $\EuScript
L_s(A)=\bigoplus_{n=1}^\infty \EuScript L_s^n(A)$ and for any $x \in
\Le_s(A)$, we set $|x|= n$
whenever $x \in \Le^n_s(A)$. The $n$th
graded piece $\EuScript L_s^n(A)$ is called the $n$th super-Lie
functor applied to $A$. In particular, there is a natural
isomorphism
\begin{align*}
 \Gamma_2(A) &\simeq   \EuScript L_s^2(A)\\
\gamma_2(a)  &\mapsto a^{[2]}
\end{align*}
analogous to  \eqref{simplest}.

\bigskip

For any free abelian group $A$, the natural monomorphism
$$
z_n: \EuScript L_s^n(A)\to \otimes^n(A),
$$
is defined inductively on homogeneous elements  by
\begin{align*}
& \{x,y\}\mapsto z_{|x|}(x)\otimes
z_{|y|}(y)-(-1)^{|x||y|}z_{|y|}(y)\otimes z_{|x|}(x),\\ &
x^{[2]}\mapsto z_{|x|}(x)\otimes z_{|x|}(x).
\end{align*}

\subsection{The third super-Lie functor.} We  will now adapt the discussion of section
 \ref{subsection-3lie}  to the
context of the super-Lie functors. The relations (\ref{sjacob})
and (\ref{yxx}) imply that the group $\EuScript L_s^3(A)$ can be
identified with the subgroup of $A\otimes A\otimes A$ generated by
the elements \bee \label{slie-3} a\otimes b\otimes c+b\otimes
a\otimes c-c\otimes a\otimes b-c\otimes b\otimes a,\ a,b,c\in A,
\ee which are the super analogues of the generators \eqref{lie-3}
of $\EuScript L^3(A)$. Let us now show that there is a natural
isomorphism
\begin{align}
\label{l3sy3}
\EuScript L_s^3(A)&\simeq Y^3(A) \\
\{\{a,b\},c\} &\mapsto a\otimes b  \wedge c+b\otimes a\wedge c,\
a,b,c\in A. \notag
\end{align}
Consider, for  any  free abelian group $A$, the Koszul resolution
$\mathit{Kos}^3(A)$ \eqref{koszul2}
\begin{equation}
\label{kos2-4} 0\to \Gamma_3(A)\buildrel{i}\over\to
\Gamma_2(A)\otimes A\buildrel{\bar f}\over\rightarrow A\otimes
\Lambda^2(A)\to \Lambda^3(A)\to 0,
\end{equation}
where the maps $\bar f$  and $i$ are  defined  by
\begin{align*}
&\bar f: \gamma_2(a) \otimes b\mapsto a\otimes a\wedge b, a,b\in A \\ &\\
& i: \begin{cases}\gamma_3(a)\mapsto \gamma_2(a)\otimes a,\ \ a\in A\\
 \gamma_2(a)b\mapsto \gamma_2(a)\otimes b+ab\otimes a, \ \ \  \ a,b\in
A.
\end{cases}
\end{align*}
The map $\bar f$ factors as
\begin{equation}\label{l3d1}
\xyma{\Gamma_2(A)\otimes A
 \ar@{->}[r]^{\bar u} & A\otimes A\otimes A \ar@{->}[r]^{\bar v} & A\otimes \Lambda^2(A) }\end{equation}
where
\begin{align*}
& \bar u: \gamma_2(a)\otimes b\mapsto a\otimes a\otimes b-b\otimes a\otimes a,\\
& \bar v: a\otimes b\otimes c\mapsto a\otimes b\wedge c,\ \ \
a,b,c\in A.
\end{align*}
It follows that
$$
\bar u(ab\otimes c)=\{\{a,b\},c\},\ \ \ a,b,c\in A
$$
so that  \eqref{kos2-4}  decomposes as  a pair of short exact
sequences
\[ 0 \to \Gamma_3(A) \to \Gamma_2(A) \otimes A \to  \EuScript L_s^3(A)
\to 0 \]
\begin{equation}
\label{sp3and1} 0  \to\EuScript L_s^3(A)  \to A \otimes
\Lambda^2(A) \to \Lambda^3(A) \to 0.
\end{equation}
It follows from the presentation \eqref{derlam} of  $L_1\Lambda^3A$
that
these two  sequences remain exact  when  $A$ is  an arbitrary
abelian group.

\subsection{Higher super-Lie functors.}
 There  exists a decomposition of   super-Lie functors,
analogous to the Curtis decomposition of Lie functors, which we
will now describe in low degrees. We begin by defining inductively
an  extension of the  Lie super-bracket \eqref{superbracket} to
 left-normalized $n$-fold brackets,   by setting
$$\{a_1,\dots,
a_n\}=\{\{a_1,\dots,a_{n-1}\},a_n\}.$$ The relations
(\ref{santi}), (\ref{sjacob}) and (\ref{yxx}) imply that the group
$\EuScript L_s^n(A)$ is generated by the  elements
$\{a_1,\dots, a_n\}$ and  $b^{[2]}$ for all $\ a_i\in A,$ and  all $b\in \EuScript L^k_s(A)$ (with  $k$  odd and $n= 2k$).

For all  $n\geq 2$ and  abelian group $A$, the natural epimophism
$ \bar{p}_n: \EuScript L_s^n(A)\to Y^n(A), $ is defined by
\begin{align}
\label{barin} 
& \{a_1,a_2,\dots, a_n\}\mapsto
a_1\otimes a_2\wedge \dots \wedge a_n+a_2\otimes a_1\wedge
a_3\wedge \dots \wedge a_n
\\
& \qquad  \qquad \quad \  \   b^{[2]}\mapsto 0
.
\end{align}
\begin{prop}
\label{super} For any  free abelian group $A$, the  sequence of
abelian groups   \begin{equation}\label{equ75} 0\to
\Lambda^2\Gamma_2(A)\buildrel{j}\over\to \EuScript
L_s^4(A)\buildrel{\bar{p}_4}\over\to A\otimes \Lambda^3(A)\to
\Lambda^4(A)\to 0
\end{equation}
is exact, with  $j$  and $\bar{p}_4$ respectively defined by
\begin{align}
\label{defj} &j: \gamma_2(a_1)\wedge \gamma_2(a_2)\mapsto
\{a_1,a_2,a_1,a_2\}
\\ &
\bar{p}_4: \{a_1,a_2,a_3,a_4\}\mapsto a_1\otimes a_2\wedge
a_3\wedge a_4+a_2\otimes a_1\wedge a_3\wedge a_4,\ \notag
\end{align}
\end{prop}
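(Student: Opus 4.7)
The strategy is to adapt the proof of Curtis's decomposition \eqref{curtis4} of the ordinary Lie functor $\EuScript L^4$ to the super-Lie setting, systematically replacing $\Lambda^2\Lambda^2$ by $\Lambda^2\Gamma_2$ and $J^4$ by $Y^4$ along the super-analogy. First, I would realize $\bar p_4$ as the restriction to $\EuScript L_s^4(A)\subset \otimes^4(A)$, via $z_4$, of the natural projection $\pi:\otimes^4(A) \to A\otimes \Lambda^3(A)$ which retains the first tensor factor and antisymmetrizes the last three. An inductive expansion of $z_4(\{a_1,a_2,a_3,a_4\})$ from the defining formula for $z_n$ produces eight tensor terms; the four in which $a_3$ or $a_4$ sits in the first slot contribute wedge combinations in $\Lambda^3(A)$ that cancel in pairs, leaving precisely $a_1\otimes a_2\wedge a_3\wedge a_4 + a_2\otimes a_1\wedge a_3\wedge a_4$. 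Since $z_4$ and $\pi$ are both natural, $\bar p_4$ is well-defined on $\EuScript L_s^4(A)$.

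Next I would construct $j$ via the identification $\Gamma_2(A)\simeq \EuScript L_s^2(A)$ given by $\gamma_2(a)\mapsto a^{[2]}$. Under this identification the super-bracket $\{-,-\}:\EuScript L_s^2(A)\otimes \EuScript L_s^2(A) \to \EuScript L_s^4(A)$ is antisymmetric by \eqref{santi} (the parity sign $(-1)^{2\cdot 2}$ is trivial) and vanishes on the diagonal by relation (2), so it descends to a map $\Lambda^2\EuScript L_s^2(A) = \Lambda^2\Gamma_2(A) \to \EuScript L_s^4(A)$. A brief manipulation combining \eqref{yxx} with super-antisymmetry gives $\{a^{[2]},b^{[2]}\} = -\{a,b,a,b\}$, so the descended map coincides with $j$ up to sign. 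Exactness at $A\otimes \Lambda^3(A)$ is then immediate: $\mathrm{im}(\bar p_4)\subseteq Y^4(A)$ since $a_1\wedge a_2\wedge a_3\wedge a_4 + a_2\wedge a_1\wedge a_3\wedge a_4 = 0$, and the reverse inclusion follows from the acyclicity of the Koszul complex $Kos^4(A)$ of \eqref{koszul2}, which identifies $Y^4(A)$ with the image of $\kappa^2:\Gamma_2(A)\otimes \Lambda^2(A)\to A\otimes \Lambda^3(A)$ generated by the elements $a\otimes a\wedge b\wedge c = -\bar p_4(\{b,a,a,c\})$.

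The main obstacle is exactness at $\EuScript L_s^4(A)$. The inclusion $\mathrm{im}(j)\subseteq \ker(\bar p_4)$ is immediate from $\bar p_4(\{a_1,a_2,a_1,a_2\}) = a_1\otimes a_2\wedge a_1\wedge a_2 + a_2\otimes a_1\wedge a_1\wedge a_2 = 0$. For the reverse inclusion, I would first establish injectivity of $j$ by tracking $\{a_1,a_2,a_1,a_2\}$ to an explicit nonzero element of $\otimes^4(A)$ via $z_4$, and then compare ranks: for $A$ of rank $q$, direct computation yields $\mathrm{rk}\,\Lambda^2\Gamma_2(A) = \tfrac{1}{8}q(q-1)(q+1)(q+2)$ and $\mathrm{rk}\,Y^4(A) = \tfrac{1}{8}q(q-1)(q-2)(q+1)$, summing to $\tfrac{1}{4}q^2(q^2-1)$, which matches the rank of $\EuScript L_s^4(A)$ (computable via PBW applied to the free super-Lie algebra on $A\otimes \mathbb Q$). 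Hence the proposed sequence is exact after tensoring with $\mathbb Q$. The genuinely delicate task is to promote this rational statement to the integral one by ruling out torsion in $\ker(\bar p_4)/\mathrm{im}(j)$. I would accomplish this either by constructing an explicit integral section of $\bar p_4$ on the Koszul generators $a\otimes a\wedge b\wedge c$ of $Y^4(A)$, or by carefully exploiting the divided-square operation \eqref{yxx} of \cite{Leibowitz}, which is precisely the feature that distinguishes $\EuScript L_s^4(A)$ from the naive graded-commutative analogue and prevents extraneous $2$-torsion in the extension.
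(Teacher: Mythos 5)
Your treatment of the two maps is sound and in fact more complete than what the paper records: the paper's entire argument for Proposition \ref{super} consists of checking that $j$ is well defined (the antisymmetry $\{a_1,a_2,a_1,a_2\}=-\{a_2,a_1,a_2,a_1\}$ from \eqref{santi}, \eqref{sjacob}, and compatibility with $\gamma_2(a+b)-\gamma_2(a)-\gamma_2(b)$), the remaining exactness being absorbed into the structure theory of free graded Lie rings with squares imported from \cite{Leibowitz}. Your identification of $\bar p_4$ as the restriction of $z_4$ followed by partial antisymmetrization is correct (though six of the eight tensor terms cancel, not four), and your proof of exactness at $A\otimes\Lambda^3(A)$ via the acyclicity of $Kos^4(\mathrm{id}_A)$ and the identity $a\otimes a\wedge b\wedge c=-\bar p_4(\{b,a,a,c\})$ is a clean and legitimate route.

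The genuine gap is exactly where you locate it, at exactness in the middle term, but neither of your proposed repairs closes it as stated. The rank computation over $\mathbb Q$ (which is correct: both sides have rank $\tfrac14 q^2(q^2-1)$) together with injectivity of $j$ only shows that $\ker(\bar p_4)/\mathrm{im}(j)$ is finite; and even if you additionally proved $\EuScript L_s^4(A)$ torsion-free, a full-rank free subgroup of a free abelian group need not be the whole group ($2\Z\subset\Z$), so no amount of rank counting can force $\mathrm{im}(j)=\ker(\bar p_4)$. Similarly, an integral section $s$ of $\bar p_4$ by itself only splits off $Y^4(A)$; what it buys you is the reduction to the statement $\EuScript L_s^4(A)=\mathrm{im}(j)+\mathrm{im}(s)$, i.e.\ a \emph{generation} claim: every generator $\{a_1,a_2,a_3,a_4\}$ and every $b^{[2]}$ with $b\in\EuScript L_s^2(A)$ must be rewritten, modulo $\mathrm{im}(j)$, as $s$ of its image under $\bar p_4$, using only the GLRS relations \eqref{santi}--\eqref{yxx}. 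That rewriting (equivalently, an explicit basis of $\EuScript L_s^4(A)$ for $A$ free with chosen basis, adapted to the filtration --- the super-analogue of the Hall-basis argument underlying the Curtis decomposition \eqref{curtis4}) is the substantive combinatorial input, and it is missing from your proposal. Until it is supplied, the argument establishes the sequence \eqref{equ75} only up to a finite error group in the middle.
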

 The
relations (\ref{santi}) and (\ref{sjacob}) imply that
$$
\{a_1,a_2,a_1,a_2\}=-\{a_2,a_1,a_2,a_1\},\ a_1,a_2\in A.
$$
In addition
$$
j: (\gamma_2(a+b)-\gamma_2(a)-\gamma_2(b))\wedge
\gamma_2(c)\mapsto \{a,c,b,c\}-\{b,c,a,c\},\ a,b,c\in A
$$
so that the map $j$ is well-defined.

\begin{remark} {\rm  For an arbitrary abelian group $A$, the sequence
(\ref{equ75}) must be replaced by:
\begin{equation}\label{equ76}
0\to \Lambda^2Y^2(A)\to \EuScript L_s^4(A)\to A\otimes
\Lambda^3(A)\to \Lambda^4(A)\to 0.
\end{equation}
and the following long exact sequence describes
the relation between the functors $\Gamma_2$ and $Y^2$:
$$
0\to R_2(A)\to \Tor(A,A)\to \Omega_2(A)\to \Gamma_2(A)\to Y^2(A)\to
0.
$$}
\end{remark}

 Let us  define  a functor $\widetilde{Y}^n$
 by the short exact sequence
\begin{equation}
\label{exactyn}
 \xymatrix{0 \ar[r] & \widetilde{Y}^n(A)  \ar[r]^j &  \EuScript
L_s^n(A)  \ar[r]^{\bar{p}_n} & Y^n(A)  \ar[r] &0\,.
}\end{equation} Sequence (\ref{equ76}) asserts in particular that
\begin{equation}
\label{exacty4a}\widetilde{Y}^4(A) =
\Lambda^2Y^2(A),\end{equation}
 so that we have the following
 super-analogue for $n=4$  of the short exact sequence \eqref{defjntilde}:
\begin{equation}
\label{exacty4}
 \xymatrix{0 \ar[r] &\Lambda^2Y^2(A)  \ar[r]^j &  \EuScript
L_s^4(A)  \ar[r]^{\bar{p}_4} & Y^4(A)  \ar[r] &0 \,.
}\end{equation}
\begin{flushright}
$\square$
\end{flushright}

Similarly, the     short exact sequence \eqref{exactyn} for $n=5$,
which is the  super-analogue of the decomposition \eqref{curtis5}
of $\EuScript L^5(A)$, is   described more precisely by the short exact
sequence
\begin{equation}
\label{exacty5} \xymatrix{ 0 \ar[r] &  Y^3(A) \otimes  \Ga_2(A)
\ar[r]^(.6)h & \EuScript L^5_s(A)
 \ar[r]^{\bar{p}_5}
& Y^5(A) \ar[r] &0\, ,}
\end{equation}
where the arrow $h$ is defined by
\begin{align*}
h: \{a,b,c\} \otimes \ga_2(d) & \mapsto \{a,b,c,d,d\}
\end{align*}
\begin{remark}{\rm
One can show  that there is a natural filtration on the  term
$\widetilde{Y}^6$ has a decomposition with an associated component
$\Gamma_2Y^3(A)$, so that  $\EuScript L_s^6(A)$ can have some
4-torsion whenever  there is some  2-torsion in the group $A$. In
fact, this is a general phenomenon: for all $k \geq 1$, there may
be some 4-torsion in $\EuScript L_s^{4k+2}(A)$ whenever $A$ is a
2-torsion group, whereas there will only be 2-torsion in all other
components of the super-Lie algebra  $\Le_s(A)$.}
\end{remark}
\subsection{Relations between Lie and super-Lie functors} Let $A$ be a free
abelian group, and consider, for  $n\geq 2,$ the  natural
monomorphisms
\begin{align*}
& c_n: \EuScript L^n(A)\to \otimes^n A ,\\
& z_n: \EuScript L_s^n(A)\to \otimes^n A.
\end{align*}
For $n=3$, we have by \eqref{descr-uv}, \eqref{slie-3}, for $\
a,b,c\in A$:
\begin{align*}
& c_3: [a,b,c]\mapsto a\otimes b\otimes c-b\otimes a\otimes c-c\otimes a\otimes b+c\otimes b\otimes a\\
&   z_3: \{a,b,c\}\mapsto a\otimes b\otimes c+b\otimes a\otimes
c-c\otimes a\otimes b-c\otimes b\otimes a.
\end{align*}

For any  pair of free abelian groups $A$ and $B$, and $n\geq 2,$
we  define a pair of  morphisms
\begin{align}
& \chi_n: \EuScript L_s^n(A)\otimes \Lambda^n(B)\to \EuScript
L^n(A\otimes B)\label{defchin}\\
& \bar \chi_n: \EuScript L^n(A)\otimes \Lambda^n(B)\to \EuScript
L_s^n(A\otimes B)\label{defchin8}
\end{align}
for  $a_1,\dots, a_n\in A$ and  $b_1,\dots, b_n\in B$, by
\begin{align*}
\chi_n: \{a_1,\dots, a_n\}\otimes b_1\wedge \dots \wedge b_n
&\mapsto \sum_{\sigma\in \Sigma_n}\text{sign}(\sigma)[a_1\otimes
b_{\sigma_1},\dots, a_n\otimes b_{\sigma_n}]
\\
\bar\chi_n: [a_1,\dots, a_n]\otimes b_1\wedge \dots \wedge
b_n&\mapsto \sum_{\sigma\in
\Sigma_n}\text{sign}(\sigma)\{a_1\otimes b_{\sigma_1},\dots,
a_n\otimes b_{\sigma_n}\} \,.
\end{align*}
For $n=2k$ with $k$ odd, we set
$$
\chi_n: \{a_1,\dots, a_k\}^{[2]}\otimes b_1\wedge\dots \wedge
b_n\mapsto \sum_{\sigma\in A_n}[[a_1\otimes b_{\sigma_1},\dots,
a_k\otimes b_{\sigma_k}],[a_1\otimes b_{\sigma_{k+1}},\dots,
a_k\otimes b_{\sigma_{2k}}]],
$$

.
\begin{theorem} Let $A$ and $B$ be free abelian groups. The  following  diagrams with arrows defined by
 \eqref{defchin}, \eqref{defchin8},  \eqref{rrref} are commutative:
\begin{equation}\label{diagramslie}
\xyma{\EuScript L_s^n(A)\otimes \Lambda^n(B)
\ar@{->}[d]^{z_n\otimes g_n}\ar@{->}[r]^(.55){\chi_n} & \EuScript L^n(A\otimes B) \ar@{->}[d]^{c_n}\\
(\otimes^nA)\otimes (\otimes^nB) \ar@{->}[r]^(.58){\lambda_n} &
\otimes^n(A\otimes B)} \qquad \qquad
 \xyma{\EuScript L^n(A)\otimes \Lambda^n(B)
\ar@{->}[d]^{c_n\otimes g_n}\ar@{->}[r]^(.55){\bar\chi_n} & \EuScript L_s^n(A\otimes B) \ar@{->}[d]^{z_n}\\
(\otimes^nA)\otimes (\otimes^nB) \ar@{->}[r]^(.58){\lambda_n} &
\otimes^n(A\otimes B)}
\end{equation}
\end{theorem}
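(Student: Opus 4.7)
The strategy is to verify commutativity term by term on generators of each domain, reducing the problem in both cases to a combinatorial sign identity relating left-normalized Lie and super-Lie brackets. By additivity of all the arrows and by the relations \eqref{santi}--\eqref{yxx} defining $\EuScript L_s^n$, the group $\EuScript L_s^n(A)\otimes\Lambda^n(B)$ is generated (as an abelian group) by the elementary tensors $\{a_1,\ldots,a_n\}\otimes b_1\wedge\cdots\wedge b_n$, together with elements of the form $\{a_1,\ldots,a_k\}^{[2]}\otimes b_1\wedge\cdots\wedge b_n$ when $n=2k$ with $k$ odd; similarly, $\EuScript L^n(A)\otimes\Lambda^n(B)$ is generated by the tensors $[a_1,\ldots,a_n]\otimes b_1\wedge\cdots\wedge b_n$.

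The principal ingredient I would establish is the following sign lemma, proved by induction on $n$: for any sequence $a_{i_1},\ldots,a_{i_n}$ of elements of a free abelian group $A$, if one writes
\[ c_n([a_{i_1},\ldots,a_{i_n}])=\sum_{\tau\in\Sigma_n}\epsilon_\tau\, a_{\tau(1)}\otimes\cdots\otimes a_{\tau(n)}, \]
then the corresponding super-bracket has expansion
\[ z_n(\{a_{i_1},\ldots,a_{i_n}\})=\sum_{\tau\in\Sigma_n}\text{sign}(\tau)\,\epsilon_\tau\, a_{\tau(1)}\otimes\cdots\otimes a_{\tau(n)}. \]
The base case $n=2$ is immediate from \eqref{lie-3} and \eqref{slie-3}. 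For the inductive step, one compares
\[ c_n([x,a_n])=c_{n-1}(x)\otimes a_n - a_n\otimes c_{n-1}(x) \]
with
\[ z_n(\{x,a_n\})=z_{n-1}(x)\otimes a_n+(-1)^n\,a_n\otimes z_{n-1}(x), \]
where the sign in the second formula comes from \eqref{santi} applied to the pair $(x,a_n)$ of degrees $n-1$ and $1$; since cycling $a_n$ from the rightmost to the leftmost slot is an $n$-cycle of sign $(-1)^{n-1}$, one checks that the predicted signs match term by term.

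Granted this identity, the commutativity of the first diagram on $\{a_1,\ldots,a_n\}\otimes b_1\wedge\cdots\wedge b_n$ is a direct verification. The composite $c_n\circ\chi_n$ produces
\[ \sum_{\sigma\in\Sigma_n}\text{sign}(\sigma)\sum_{\tau\in\Sigma_n}\epsilon_\tau\,(a_{\tau(1)}\otimes b_{\sigma\tau(1)})\otimes\cdots\otimes (a_{\tau(n)}\otimes b_{\sigma\tau(n)}), \]
whereas $\lambda_n\circ(z_n\otimes g_n)$ yields
\[ \sum_{\tau,\rho\in\Sigma_n}\text{sign}(\tau)\text{sign}(\rho)\,\epsilon_\tau\,(a_{\tau(1)}\otimes b_{\rho(1)})\otimes\cdots\otimes(a_{\tau(n)}\otimes b_{\rho(n)}). \]
The substitution $\rho=\sigma\tau$, together with $\text{sign}(\sigma)=\text{sign}(\rho)\text{sign}(\tau)$, identifies the two expressions. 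The second diagram is treated symmetrically, with the roles of $c_n$ and $z_n$ interchanged.

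The main obstacle I anticipate is the supplementary verification, for the first diagram in even degree $n=2k$ with $k$ odd, on squared generators $\{a_1,\ldots,a_k\}^{[2]}\otimes b_1\wedge\cdots\wedge b_n$. Here $z_n$ carries the square to $z_k(\{a_1,\ldots,a_k\})^{\otimes 2}$, while $\chi_n$ is defined as a sum $\sum_{\sigma\in A_n}[L_1(\sigma),L_2(\sigma)]$ of commutators of half-sized sub-brackets indexed only by the alternating group. The key observation that bridges the gap is that the swap permutation exchanging the two blocks of size $k$ has sign $(-1)^k=-1$, which allows one to rewrite the sum after $c_n$ as a single sign-weighted expression $\sum_{\sigma\in\Sigma_n}\text{sign}(\sigma)\,c_k(L_1(\sigma))\otimes c_k(L_2(\sigma))$. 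Applying the sign lemma in degree $k$ to each factor and reindexing by $\rho=\sigma\,(\tau_1\oplus\tau_2)$, with $\tau_1\oplus\tau_2$ the block-diagonal element of $\Sigma_n$, one recovers once more the expression for $\lambda_n\circ(z_n\otimes g_n)$ applied to the square.
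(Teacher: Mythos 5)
Your proposal is correct, and it reorganizes the argument around a key lemma that the paper never isolates. The paper proves the first square by a single induction on $n$ carried out directly on the composite $\lambda_n\circ(z_n\otimes g_n)$: it expands $z_n\{a_1,\dots,a_n\}$ recursively, regroups the sum over $\Sigma_n$ according to the image of the last index, and invokes the inductive hypothesis on $c_{n-1}$ and $z_{n-1}$; the squared generators and the second square are then dispatched with ``one can prove in a similar manner'' and ``repeat with appropriate sign changes.'' You instead extract the structural content of that induction into the sign lemma $z_n(\{a_{i_1},\dots,a_{i_n}\})=\sum_\tau \mathrm{sign}(\tau)\,\epsilon_\tau\, a_{\tau(1)}\otimes\cdots\otimes a_{\tau(n)}$ whenever $c_n([a_{i_1},\dots,a_{i_n}])=\sum_\tau\epsilon_\tau\,a_{\tau(1)}\otimes\cdots\otimes a_{\tau(n)}$ (your inductive step is correct: the term $a_n\otimes c_{n-1}(x)$ corresponds to composing with an $n$-cycle of sign $(-1)^{n-1}$, which exactly absorbs the discrepancy between the Lie sign $-1$ and the super sign $-(-1)^{n-1}$). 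Once this is in place, both squares of \eqref{diagramslie} collapse to the substitution $\rho=\sigma\tau$ and the identity $\mathrm{sign}(\tau)\,\mathrm{sign}(\sigma\tau)=\mathrm{sign}(\sigma)$, and the squared-generator case reduces to the observation that the block swap has sign $(-1)^k=-1$ for $k$ odd, converting the sum over $A_n$ of commutators into a signed sum over all of $\Sigma_n$, followed by the substitution $\rho=\sigma(\tau_1\oplus\tau_2)$. This buys genuine uniformity: the two cases the paper leaves to the reader become formal corollaries of the same computation rather than parallel inductions. The only point worth making explicit in a final write-up is that $\EuScript L_s^n(A)$ is generated by left-normalized brackets together with squares of left-normalized brackets (general squares $b^{[2]}$ reduce to these via the relation $(x+y)^{[2]}=x^{[2]}+y^{[2]}+\{x,y\}$ and the Jacobi identity), so that checking commutativity on the generators you list does suffice.
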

\begin{proof}
Let us begin by considering the first diagram (\ref{diagramslie})
for  $n=2$. The commutativity of  diagram
$$
\xyma{\Gamma_2(A)\otimes \Lambda^2(B)
\ar@{->}[d]^{z_2\otimes g_2}\ar@{->}[r]^{\chi_2} & \Lambda^2(A\otimes B) \ar@{->}[d]^{c_2}\\
(A\otimes A)\otimes (B\otimes B) \ar@{->}[r]^{\lambda_2} &
(A\otimes B)\otimes (A\otimes B)}
$$
can be checked directly: for any $a_1,\in A,\ b_1,b_2\in B$:
\begin{align*}
& \lambda_2\circ(z_2\otimes g_2)(\gamma_2(a_1)\otimes b_1\wedge
b_2)=c_2\circ \chi_2(\gamma_2(a_1)\otimes b_1\wedge b_2)=\\ &
(a_1\otimes b_1)\otimes (a_1\otimes b_2)-(a_1\otimes b_2)\otimes
(a_1\otimes b_1).
\end{align*}
By induction on $n$, we find that
\begin{multline*}
z_n\otimes g_n(\{a_1,\dots, a_n\}\otimes b_1\wedge\dots\wedge
b_n)=\sum_{\sigma\in \Sigma_n}\text{sign}(\sigma)\,z_n\{a_1,\dots,
a_n\}\otimes b_{\sigma_1}\otimes \dots \otimes b_{\sigma_n}=\\
\sum_{\sigma\in
\Sigma_n}\text{sign}(\sigma)\,z_{n-1}\{a_1,\dots,a_{n-1}\}\otimes
a_n\otimes b_{\sigma_1}\otimes \dots\otimes b_{\sigma_n}\\+
(-1)^n\sum_{\sigma\in \Sigma_n}\text{sign}(\sigma)\,a_n\otimes
z_{n-1}\{a_1,\dots,a_{n-1}\}\otimes b_{\sigma_1}\otimes
\dots\otimes b_{\sigma_n}
\end{multline*}
Hence
\begin{multline*}
\lambda_n\circ(z_n\otimes g_n)(\{a_1,\dots, a_n\}\otimes
b_1\wedge\dots\wedge b_n)=\\
\sum_{j=1}^n\lambda_{n-1}\left(\sum_{\eta=\{\sigma_1,\dots,
\sigma_{n-1}\}\in \{1,\dots, \hat j,\dots,
n\}}\text{sign}(\eta,j)\,z_{n-1}\{a_1,\dots,a_{n-1}\}\otimes
b_{\sigma_1}\otimes \dots \otimes b_{\sigma_{n-1}}\right)\otimes
(a_n\otimes b_j)\\
+ (-1)^n\sum_{j=1}^n(a_n\otimes b_j)\otimes\lambda_{n-1}\left(
\sum_{\eta=\{\sigma_1,\dots,\sigma_{n-1}\}\in\{1,\dots,\hat
j,\dots,
n\}}\text{sign}(j,\eta)\,z_{n-1}\{a_1,\dots,a_{n-1}\}\otimes
b_{\sigma_1}\otimes \dots \otimes b_{\sigma_{n-1}}\right)\\=
\sum_{j=1}^n(\sum_{\eta=\{\sigma_1,\dots, \sigma_{n-1}\}\in
\{1,\dots, \hat j,\dots,
n\}}\text{sign}(\eta,j)\,(c_{n-1}[a_1\otimes b_{\sigma_1},\dots,
a_{n-1}\otimes b_{\sigma_{n-1}}]\otimes (a_n\otimes b_j) \ -\\
(a_n\otimes b_j)\otimes c_{n-1}[a_1\otimes
b_{\sigma_1},\dots,a_{n-1}\otimes b_{\sigma_{n-1}}]))=\\
\sum_{\sigma\in \Sigma_n}\text{sign}(\sigma)\,c_n[a_1\otimes
b_{\sigma_1},\dots, a_n\otimes b_{\sigma_n}]=c_n\circ
\chi_n(\{a_1,\dots, a_n\}\otimes b_1\wedge \dots\wedge b_n)
\end{multline*}
 One can prove in a similar manner  that
\begin{equation}\label{oddc}
c_{2k}\circ \chi_{2k}(\{a_1,\dots, a_k\}^{[2]}\otimes b_1\wedge
\dots\wedge b_{2k})= \lambda_{2k}\circ(z_{2k}\otimes
g_{2k})(\{a_1,\dots, a_k\}^{[2]}\otimes b_1\wedge \dots\wedge
b_{2k})
\end{equation} for an odd $k$,
 so that   the commutativity of the first
diagram (\ref{diagramslie}) is proved. The commutativity of the
second diagram
 is proved in a similar manner: for $n=2$, it
follows from the commutativity of the diagram (\ref{pension1}) and
one then simply repeats the previous computation for a general
$n$,
 with appropriate changes in  the signs of the various  expressions.
\end{proof}
For any pair of abelian groups $A,B$, we define as follows
 a natural arrow:
\begin{equation}
\label{defbetan} \beta_n: Y^n(A)\otimes \Lambda^n(B)\to
J^n(A\otimes B)
\end{equation}
\begin{multline*}
\beta_n: \bar p_n\{a_1,\dots, a_n\}\otimes b_1\wedge \dots\wedge
b_n\mapsto \sum_{\sigma\in
\Sigma_n}\text{sign}(\sigma)\,p_n[a_1\otimes b_{\sigma_1},\dots,
a_n\otimes b_{\sigma_n}],\\ a_1,\dots, a_n\in A,\ b_1,\dots,
b_n\in B.
\end{multline*}

\begin{prop}\label{jfunctor}
There is a natural commutative diagram with exact columns
$$
\xymatrix{Y^n(A)\otimes \Lambda^n(B)
\ar@{^{(}->}[d]\ar@{->}[r]^{\beta_n} &
J^n(A\otimes B)\ar@{^{(}->}[d] \\
A\otimes \Lambda^{n-1}(A)\ar@{->>}[d]\otimes \Lambda^n(B)
\ar@{->}[r]^(.48){\eta_n'} & (A\otimes B)\otimes SP^{n-1}(A\otimes
B)\ar@{->>}[d]\\
\Lambda^n(A)\otimes \Lambda^n(B) \ar@{->}[r]^{\eta_n} &
SP^n(A\otimes B) \,,}
$$
where the map $\eta_n $ is defined by  \eqref{etan} and $\eta_n'$
is  by
$$
\eta_n': a_1\otimes a_2\wedge \dots \wedge a_n\otimes b_1\wedge
\dots \wedge b_n\mapsto \sum_{\sigma\in \Sigma_n}
\mathrm{sign}(\sigma)\,(a_1\otimes b_{\sigma_1})\otimes
(a_2\otimes b_{\sigma_2})\dots (a_n\otimes b_{\sigma_n})\,.
$$
\end{prop}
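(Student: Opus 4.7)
The plan is to establish the exactness of the two columns first and then to verify the commutativity of each square separately. The left column is obtained by tensoring the defining short exact sequence $0\to Y^n(A)\to A\otimes \Lambda^{n-1}(A)\to \Lambda^n(A)\to 0$ from \eqref{schur} with $\Lambda^n(B)$; as is implicit from the fact that $Y^n(A)$ is defined as a kernel, compare \eqref{J2}, we may assume $A$ and $B$ are free abelian, so $\Lambda^n(B)$ is free and tensoring preserves short-exactness. The right column is, by definition, the defining sequence \eqref{defjn} of $J^n$ applied to $A\otimes B$.

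For the bottom square, I would evaluate both composites on a generator $a_1\otimes a_2\wedge \ldots \wedge a_n\otimes b_1\wedge \ldots\wedge b_n$. Going down first produces $a_1\wedge a_2\wedge \ldots\wedge a_n\otimes b_1\wedge \ldots\wedge b_n$ and then, by the formula \eqref{etan} for $\eta_n$, the element $\sum_\sigma \text{sign}(\sigma)(a_1\otimes b_{\sigma(1)})\cdots (a_n\otimes b_{\sigma(n)})$ of $SP^n(A\otimes B)$. Going right first applies $\eta_n'$ to produce $\sum_\sigma \text{sign}(\sigma)(a_1\otimes b_{\sigma(1)})\otimes (a_2\otimes b_{\sigma(2)})\cdots (a_n\otimes b_{\sigma(n)})$, which yields the same element after multiplication in $SP^n(A\otimes B)$.

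For the top square, the key observation is that the commutativity just proved forces $\eta_n'$ to send $Y^n(A)\otimes \Lambda^n(B)$ into $J^n(A\otimes B)$: if $y\in Y^n(A)$, its image in $\Lambda^n(A)$ vanishes by definition of $Y^n$, so the bottom composite applied to $y\otimes b_1\wedge \ldots\wedge b_n$ is zero, forcing $\eta_n'(y\otimes b_1\wedge \ldots\wedge b_n)$ to lie in the kernel $J^n(A\otimes B)$ of the right vertical arrow. I would then define $\beta_n$ to be this corestriction of $\eta_n'$, so that the top square commutes by construction.

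It remains to verify that this $\beta_n$ agrees with the explicit formula stated in terms of $\bar p_n$ and $p_n$. Using $\bar p_n\{a_1,\ldots,a_n\}=a_1\otimes a_2\wedge \ldots\wedge a_n+a_2\otimes a_1\wedge a_3\wedge \ldots\wedge a_n$, I would apply $\eta_n'$ to each summand and compare with $\sum_\sigma \text{sign}(\sigma)\,p_n[a_1\otimes b_{\sigma(1)},\ldots,a_n\otimes b_{\sigma(n)}]$ expanded via $p_n[m_1,\ldots,m_n]=m_1\otimes m_2\cdots m_n-m_2\otimes m_1 m_3\cdots m_n$ from Proposition \ref{slez}. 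The first term of $p_n$ matches $\eta_n'$ of the first summand of $\bar p_n\{a_1,\ldots,a_n\}\otimes b_1\wedge \ldots\wedge b_n$ on the nose, whereas the minus sign in the second term is absorbed by the reindexing $\sigma\mapsto \sigma\cdot(1\,2)$, after which it identifies with $\eta_n'$ of the second summand. This sign-bookkeeping is the only real obstacle; once it is carried out, all the assertions of the proposition follow.
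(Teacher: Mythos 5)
Your proof is correct and is exactly the verification the paper has in mind: the paper simply states that the proposition ``follows directly from the definition of the various maps'' and gives no further detail. Your route of obtaining the top arrow as the corestriction of $\eta_n'$ (forced to land in $J^n(A\otimes B)$ by the commutativity of the bottom square) and then matching it against the explicit formula for $\beta_n$ via the reindexing $\sigma\mapsto\sigma\cdot(1\,2)$ is the natural way to carry this out — it also disposes of the well-definedness of $\beta_n$ on the generators $\bar p_n\{a_1,\dots,a_n\}\otimes b_1\wedge\dots\wedge b_n$ — and your restriction to free $A$, $B$ for the injectivity at the top of the left-hand column is consistent with the standing hypotheses of this section.
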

The proof of this proposition follows directly from the definition
of the
various  maps.
\section{Derived functors of  Lie functors}\vspace{.5cm}
 It is asserted in \cite{Schlesinger:66} that  if
$p$ is an odd prime then the groups $L_{n+k} \EuScript L^p(\mathbb
Z, n)$ are $p$-torsion for all $k$, and in particular
\begin{equation}\label{primedec}
L_{n+k}\EuScript L^p(\mathbb Z,n)=\begin{cases} \mathbb Z/p,\
k=2i(p-1)-1,\ i=1,2,\dots, [n/2]\\
0,\ \text{otherwise}\end{cases}
\end{equation}
In the next three subsections, we will give a direct  proof of
this fact for $p=3$, in other words show that
\begin{equation}\label{dfr}
L_{n+k}\EuScript L^3(\mathbb Z,n)=\begin{cases} \mathbb Z/3,\
k=4i-1,\
i=1,2,\dots, [n/2]\\
0,\ \text{otherwise}
\end{cases}
\end{equation}
and we will more generally compute  in theorem \ref{derl3-1}  the
derived functors  $L_{n+k}\EuScript L^3(A,n)$  for a general
abelian group $A$. Note that  the derived functors of  the  Lie
functors $\EuScript L^q$ are complicated when $q$ a composite
number, and we refer to \cite{MPBook}, page 280 for a description
of these in low degrees. One finds for example that
$$
L_i\EuScript L^8(\mathbb Z,1)=\begin{cases} \mathbb Z/2,\
i=4,5,7\\
0,\ i\neq 4,5,7
\end{cases}
$$
The gap in the homotopy groups which occurs here for $i=6$ is the
illustration of a general phenomenon which, as we will see for example
in
\eqref{exa3}, also occurs in more elaborate contexts.

Returning to the $p=3$ case, let us
 first observe that  for any abelian group
$A$ the exact sequence (\ref{sp3and}) derives to a long exact
sequence
\begin{multline}
\dots\to L_{i+1}\EuScript L^3(A,n)\to
\pi_{i+1}\left(A[n]\buildrel{L}\over\otimes LSP^2(A,n)\right)\to
L_{i+1}SP^3(A,n)\to \\
L_{i}\EuScript L^3(A,n) \to
\pi_{i}\left(A[n]\buildrel{L}\over\otimes LSP^2(A,n)\right)\to
L_{i}SP^3(A,n)\to \dots\label{longsequ}
\end{multline}
In addition, the isomorphism \eqref{declie} implies  that
\begin{equation}\label{lie3}
    L_{i}Y^3(A,n)\simeq L_{i+3}\,\EuScript L^3(A,n+1),\ \text{for all $
i\geq 3$}.
\end{equation}

\subsection{The derived functors $L_i\EuScript L^3(A)$}
The Koszul  sequence \eqref{y-kos} associated for $n=3$ to a flat
2-term resolution $f:L \to M$ of an abelian group $A$ may be written
as follows:
$$
0\to \EuScript L_s^3(L)\to L\otimes M\otimes L
\buildrel{\delta}\over\to L\otimes M\otimes M\to \EuScript L^3(M)
\to \EuScript L^3(A) \to 0
$$
where
$$
\delta(l\otimes m\otimes l')=l\otimes m\otimes f(l')+l'\otimes
f(l)\otimes m-l\otimes f(l')\otimes m,\ \   l,l'\in L, m\in M.
$$
The three middle terms constitute a   complex which represents the object
$L\EuScript L^3(A)$ of the derived category.  In particular, if
one considers the resolution $\mathbb Z\buildrel{m}\over\to
\mathbb Z$ of $\mathbb Z/m$, one finds  that
$$
L_i\EuScript L^3(\mathbb Z/m)=\begin{cases} \mathbb Z/m,\ i=1,\\
0,\ i\neq 1\end{cases}
$$
By \eqref{syder}, the natural transformation  $\Tor(\EuScript
S_2(A),A)\to \EuScript S_3(A)$ is an epimorphism. More generally,
the exact sequence (\ref{longsequ}) provides the following
description of the derived functors of $\EuScript L^3$:
\begin{align*}
& L_2\EuScript L^3(A)=ker\{\Tor(\EuScript S_2(A),A)\to \EuScript
S_3(A)\}\\
& 0\to ker\{\EuScript S_2(A)\otimes A\to L_1SP^3(A)\}\to
L_1\EuScript L^3(A)\to \Tor(SP^2(A),A)\to 0\\
& L_i\EuScript L^3(A)=0,\ i>2.
\end{align*}
\subsection{The derived functors $L_i\EuScript L^3(A,1)$.}\label{l3a1}
Similarly, the short exact sequence \eqref{sp3and1} derives to a
long exact sequence
\begin{multline}
\dots\to L_{i+1}\EuScript L^3_s(A,n)\to
\pi_{i+1}\left(A[n]\buildrel{L}\over\otimes
L\Lambda^2(A,n)\right)\to L_{i+1}\Lambda^3(A,n)\to
\\
L_{i}\EuScript L^3_s(A,n) \to
\pi_{i}\left(A[n]\buildrel{L}\over\otimes
L\Lambda^2(A,n)\right)\to L_{i}\Lambda^3(A,n)\to
\dots\label{slongsequ}
\end{multline}
analogous to \eqref{longsequ}. For $n= 0$ this reduces to the
exact sequence
\begin{multline}\label{longex}
0\to L_2 Y^3(A)\to \Tor(\Omega_2(A),A)\to
\Omega_3(A)\to\\
L_1Y^3(A)\to \pi_1\left(L\Lambda^2(A)\buildrel{L}\over\otimes A
\right)\to L_1\Lambda^3(A)\to 0
\end{multline} This exact sequence is consistent with the results of
\cite{Breen} prop. 6.15, and with the presentation \eqref{derlam}
of the groups $L_i\Lambda^n(A)$. The latter implies that the
composite arrow
\begin{equation}
\label{surjl1lam3}
 L_1\Lambda^2(A) \ot A \to \pi_1\left(L\Lambda^2(A)
\buildrel{L}\over\otimes A\right)
 \to L_1\Lambda^3(A)\end{equation}
 is an
 epimorphism. The K\"{u}nneth formula, together with the exact
 sequence \eqref{longex},  determines a 3-step filtration of
 $L_1Y^3(A)$. Taking into account
the isomorphism (\ref{lie3}) for $n=0$, we obtain the following
description of the derived functors $ L_i\EuScript L^3(A,1)$:
\begin{align}
\label{derl3} & L_3\EuScript L^3(A,1)\simeq Y^3(A)\\\notag
 gr_1&L_4\EuScript L^3(A,1)\simeq gr_1L_1Y^3(A)\simeq
\text{coker}\{\Tor(\Omega_2(A),A)\to \Omega_3(A)\}\\\notag
 gr_2&L_4\EuScript L^3(A,1)\simeq gr_2L_1Y^3(A)\simeq
\ker\{\Omega_2(A)\otimes A\to L_1\Lambda^3(A)\}\\\notag
 gr_3&L_4\EuScript L^3(A,1)\simeq gr_3L_1Y^3(A)\simeq
\Tor(\Lambda^2(A),A)\\ & L_5\EuScript L^3(A,1)\simeq
L_2Y^3(A)\simeq \ker\{\Tor(\Omega_2(A),A)\to \Omega_3(A)\}\notag
\end{align}

\bigskip

\begin{remark}\label{ourremark}
{\rm The natural map
\begin{equation}
\label{defmap1} \mathrm{Tor}(\Omega_2(A), A) \simeq
 \pi_2\left(L\Lambda^2A \buildrel{L}\over\otimes A\right)
\to \Omega_3(A)  \end{equation}
 in the exact sequence \eqref{longex} is in general neither injective
 nor surjective.
 This can be seen by  considering the generators
$\om_i^h(x)$   \eqref{torprod} of the groups $\Om_n(A)$. We know
by \cite{Breen} (5.14) that the diagram
\begin{equation}
\label{diagom3}\xymatrix{ \Ga_2({}_hA) \ot A \ar[d] \ar[r]
\ar[d]_{\lambda^2_h \otimes 1}&
 \Gamma_3({}_hA) \ar[d]^{\lambda^3_h} \\
\mathrm{Tor}(\Omega_2(A),\, A) \ar[r] & \Om_3(A) }\end{equation}
is commutative. It follows from  the relation $\ga_2(x) x =
3\,\ga_3(x)$ in $\Ga_3(A)$ that, with the notation introduced in
\eqref{torprod}, the corresponding relation
\[\omega^h_2(x) \ast x = 3\,\omega^h_3(x)\]
is satisfied in $\Om_3(A)$ for all $x \in {}_hA$. In particular,
this implies that the arrow \eqref{defmap1} is trivial for $h= 3$
and $A= \mathbb{Z}_3$. Moreover, it is asserted in \cite{BB} that
\begin{equation}\label{buth} L_1Y^3(\mathbb Z/3)=\mathbb Z/9.\end{equation}
We refer to proposition \ref{ly3z3}  for a proof by our methods  of this
assertion.}
\end{remark}

\subsection{The derived functors $L_i\EuScript L^3(A,2)$.} The d\'ecalage
isomorphisms (\ref{bq1}) and K\"{u}nneth formula yield the
following description of the  groups
$\pi_r\left(LSP^j(A,2)\buildrel{L}\over\otimes A[2]\right)$:
\begin{align*}
& \pi_{2j+2}\left(LSP^j(A,2)\buildrel{L}\over\otimes
A[2]\right)\simeq\Gamma_j(A)\otimes A\\
& 0\to L_{i-2}\Gamma_j(A)\otimes A\to
\pi_{2j+i}\left(LSP^j(A,2)\buildrel{L}\over\otimes A[2]\right)\to
\Tor(L_{i-3}\Gamma_j(A),A)\to 0,\\ & \ \ \ \ \ \ \ \ \ \ \ \ \ \ \
\ \ \ \ \ \ \ \ \ \ \ \ \ \
\ \ \ \ \ \ \ \ \ \ \ \ \ \ \ \ \ \ \ \ \ \ \ \ \  \ \ \ \ \ \ \ \ \ \ \ \ \ \ \ \ \ \ \  i=3,\dots, j+1\\
& \pi_{3j+2}\left(LSP^j(A,2)\buildrel{L}\over\otimes
A[2])\right)\simeq \Tor(L_{j-1}\Gamma_j(A),A)
\end{align*}

From \eqref{exsp2} and \eqref{deflw3}
 we have the following commutative diagram, a prolongation of \eqref{prolong}:
\begin{equation}
\label{mapcub} { \xymatrix@C=4.5pt@R=7pt{\EuScript S_2(A)\otimes A
\ar@{^{(}->}[r] \ar@{->}[dd]  &
\pi_1\left(LSP^2(A)\buildrel{L}\over\otimes A\right)
\ar@{->}[dd]^q \ar@{->>}[dr] \ar@{->>}[rr] & & \Tor(SP^2(A),A)
\ar@{^{(}->}[dd] \\  & & L_1SP^3(A) \ar@{^{(}-}[dd] \\
R_2(A)\otimes A \ar@{^{(}->}[r] \ar@{->>}[dd]  &
\pi_1\left(L\Gamma_2(A)\buildrel{L}\over\otimes A\right)
\ar@{->}[dr]_(.6){t_1} \ar@{->>}[dd] \ar@{-}[r] & \ar@{->}[r]
 & \Tor(\Gamma_2(A),A) \ar@{->}[dd] \\
& & L_1\Gamma_3(A) \ar@{->>}[dd] \\
\Tor(A,\mathbb Z/2)\otimes A \ar@{^{(}->}[r]  & \text{coker}(q)
\ar@{->}[dr] \ar@{-}[r] & \ar@{->}[r] &
\Tor(A\otimes \mathbb Z/2,A)\\
& & ( \Tor (A,\Z/2)
 \otimes A) \oplus\ \Tor(A,\Z/3) }}
\end{equation}
\medskip
A  diagram chase yields  a  canonical isomorphism
 $\text{coker}(t_1) \simeq\ \Tor(A,\mathbb Z/3). $
We obtain the
following description of a portion of the long exact sequence \eqref{longsequ} for $n=2$:
\begin{equation}\label{g2}
\xymatrix{ R_2(A)\otimes A \ar@{->}[r]^{t_2} & L_1\Gamma_3(A) \ar[r] &
 L_6\EuScript L^3(A,2) \ar@{->}[r] & \Gamma_2(A)\otimes A\ar@{->}[r] &
\Gamma_3(A) \ar@{->>}[r] & A\otimes \mathbb Z/3
}
\end{equation}

\bigskip

\noindent Here $t_2 := t_1\circ q[2]$, up to a double d\'ecalage map.
There is a  functorial direct sum decomposition of  the term $L_6\EuScript
L^3(A,2)=L_3Y^3(A,1)$,
 as can be seen  from the following diagram, in which the
vertical arrows are suspension maps:
$$
\xymatrix@R=9pt{\Tor(A,\mathbb Z/3) \ar@{^{(}->}[r] \ar@{->}[d]^(.42){\wr} &
L_6\EuScript L^3(A,2) \ar@{->}[d] \ar@{->>}[r] & \EuScript L^3(A)\\
\Tor(A,\mathbb Z/3) \ar@{->}[r]^{\sim} & L_7\EuScript L^3(A,3)}
$$
(we refer to theorem \ref{derl3-1} below for this description of
$L_7\EuScript L^3(A,3)$). We may prolong   diagram (\ref{g2}) by
the following exact sequence:
$$
0\to L_8\EuScript L^3(A,2)\to \Tor(R_2(A),A)\to R_3(A)\to
L_7\EuScript L^3(A,2)\to \text{ker}(t_2)\to 0.
$$
The diagram
$$
\xymatrix@R=9pt{\Tor(\EuScript S_2(A),A) \ar@{^{(}->}[r] \ar@{->>}[d] &
\Tor(R_2(A), A)
\ar@{->}[r] \ar@{->}[d] & \Tor_2(A,A,\mathbb Z/2) \ar@{=}[d] \ar@{->>}[r] & \text{ker}\{\EuScript S_2(A)\otimes A\to R_2(A)\otimes A\}\\
\EuScript S_3(A) \ar@{^{(}->}[r] & R_3(A) \ar@{->>}[r]\ &
\Tor_2(A,A,\mathbb Z/2) }
$$
then implies that
$$
\text{ker}\{\Tor(\EuScript S_2(A),A)\to \EuScript
S_3(A)\}=\text{ker}\{\Tor(R_2(A),A)\to R_3(A)\}
$$
and
$$
\text{coker}\{\Tor(R_2(A),A)\to R_3(A)\}=\text{ker}\{\EuScript
S_2(A)\otimes A\to R_2(A)\otimes A\}.
$$
 Taking once more into account the d\'ecalage
 isomorphisms  \eqref{lie3}, this provides a complete description of
 the functors $L_i \Le^3(A,2)$:
\begin{align}\label{l3a2}
& L_5\EuScript L^3(A,2)=L_2Y^3(A,1)=A\otimes \mathbb Z/3\\ \notag
& L_6\EuScript L^3(A,2)=L_3Y^3(A,1)= \EuScript L^3(A)\oplus \Tor(A,\mathbb Z/3)\\
\notag
 gr_1&L_7\EuScript L^3(A,2)=gr_1L_4Y^3(A,1)=\text{ker}\{\EuScript S_2(A)\otimes A\to R_2(A)\otimes A\}\\ \notag
 gr_2&L_7\EuScript L^3(A,2)=gr_2L_4Y^3(A,1)= \ker\{R_2(A)\otimes A\to
L_1\Gamma_3(A)\}\\ \notag
 gr_3&L_7\EuScript L^3(A,2)=gr_3L_4Y^3(A,1)=\Tor(\Gamma_2(A),A)\\ \notag
& L_8\EuScript L^3(A,2)=L_5Y^3(A,1)=L_2\EuScript L^3(A).
\end{align}
\noi For all other values of $i, \  L_i\Le^3(A,2)= 0$.

\vspace{.5cm} As an illustration of these results, we will now
give an explicit description of the isomorphism
\begin{equation}\label{isoder1}
A\otimes \mathbb Z/3\to L_2Y^3(A,1)=L_2\EuScript L_s^3(A,1)
\end{equation} occuring in the first equation of \eqref{l3a2}, even
though this will not be used in the sequel. Consider the
simplicial model \eqref{lowdeg}   of  $L\EuScript L_s^3(A,1)$
associated to a free resolution \eqref{res11} of $A$.
 The isomorphism (\ref{isoder1}) is induced
by the map
$$
A\otimes \mathbb Z/3\to \EuScript L_s^3(L\oplus s_1(M)\oplus
s_0(L))/\partial_0(\cap_{i=1}^3\partial_i)
$$
defined, for a chosen lifing $a$ to $ M$ of $\bar a\in A\otimes \mathbb
Z/3$,   by
$$
\bar a\mapsto \{s_1(a), s_0(a), s_1(a)\}.
$$
In order for this map to be well-defined, we must verify that
$$3\{s_1(a),s_0(a),s_1(a)\}\in \partial_0(ker(\partial_1)\cap ker(\partial_2)\cap
ker(\partial_3)).$$ This is true since  the element
\begin{multline*}
\eta=3\{s_2s_0(a),s_1s_0(a),s_2s_0(a)\}-\{s_2s_1(a),s_1s_0(a),s_2s_0(a)\}+\{s_2s_1(a),s_2s_0(a),s_1s_0(a)\}\in\\
\EuScript L_s^3(s_0(A_1)\oplus s_1(A_1)\oplus s_2(A_1)\oplus
s_1s_0(A_0)\oplus s_2s_0(A_0)\oplus s_2s_1(A_0))
\end{multline*}
satisfies the equations  $\partial_i(\eta)=0,\ i=1,2,3$ and
$\partial_0(\eta)=3\{s_1(a),s_0(a),s_1(a)\}$.
\subsection{The derived functors $L_i \EuScript {L}^3(A,n) \ \text{for} \  n\geq 3$}
In each of the three following commutative diagrams, the exactness
of the upper short exact sequence follows from proposition
\ref{derSP2} and the exactness of the lower one from theorem
\ref{sp3an}. For $n \geq 3$ odd:
$$
\xymatrix@R=11pt{ \pi_1\left(\Lambda^2(A)\buildrel{L}\over\otimes
A\right)\ar@{^{(}->}[r] \ar@{->}[d] &
\pi_{3m+1}\left(LSP^2(A,n)\buildrel{L}\over\otimes A[n]\right)
\ar@{->}[d] \ar@{->>}[r] & \Tor_2(A,A,\mathbb Z/2) \ar@{=}[d]\\
L_1\Lambda^3(A) \ar@{^{(}->}[r] & L_{3n+1}SP^3(A,n) \ar@{->>}[r] &
\Tor_2(A,A,\mathbb Z/2)}
$$
$$
\xymatrix@R=11pt{ \Lambda^2(A)\otimes A \ar@{^{(}->}[r] \ar@{->}[d]
& \pi_{3n}\left(LSP^2(A,n)\buildrel{L}\over\otimes A[n]\right)
\ar@{->}[d] \ar@{->>}[r] & \Tor_1(A,A,\mathbb Z/2) \ar@{=}[d]\\
\Lambda^3(A) \ar@{^{(}->}[r] & L_{3n}SP^3(A,n) \ar@{->>}[r] &
\Tor_1(A,A,\mathbb Z/2)}
$$
and for $n >3$ even:
$$
\xymatrix@R=11pt{ \Gamma_2(A)\otimes A \ar@{^{(}->}[r] \ar@{->}[d]
& \pi_{3n}\left(LSP^2(A,n)\buildrel{L}\over\otimes A[n]\right)
\ar@{->}[d] \ar@{->>}[r] & \Tor_2(A,A,\mathbb Z/2) \ar@{=}[d]\\
\Gamma_3(A) \ar@{^{(}->}[r] & L_{3m}SP^3(A,n) \ar@{->>}[r] &
\Tor_2(A,A,\mathbb Z/2)}
$$
The sequence (\ref{longsequ}) therefore determines  the following
exact sequences:

 \medskip

\noindent{\it Case I: $n$ odd $\geq 3$}
\begin{multline*}
0\to L_{3n+2}\EuScript L^3(A,n)\to \Tor(\Omega_2(A),A)\to \Omega_3(A)\to\\
L_{3n+1}\EuScript L^3(A,n)\to
\pi_1\left(\Lambda^2(A)\buildrel{L}\over\otimes A\right)\to L_1\Lambda^3(A)\to\\
L_{3n}\EuScript L^3(A,n)\to \Lambda^2(A)\otimes A\to
\Lambda^3(A)\to L_{3n-1}\EuScript L^3(A,n)
\end{multline*}

\noindent{\it Case II: $n$ even}
\begin{multline*}
0\to L_{3n+2}\EuScript L^3(A,n)\to \Tor(R_2(A),A)\to R_3(A)\to\\
L_{3n+1}\EuScript L^3(A,n)\to
\pi_1\left(L\Gamma_2(A)\buildrel{L}\over\otimes A\right)\to L_1\Gamma_3(A)\to\\
L_{3n}\EuScript L^3(A,n)\to \Gamma_2(A)\otimes A\to \Gamma_3(A)\to
L_{3n-1}\EuScript L^3(A,n)
\end{multline*}

We may now summarize this discussion as follows:
\begin{theorem}
\label{derl3-1}
\noindent{\it Case I: $n$ odd}
$$ L_i\EuScript L^3(A,n)=\begin{cases} \ker\{\tor(\Omega_2(A),
A)\to
\Omega_3(A)\},\ i=3n+2,\\
gr_1L_{3n+1}\EuScript
L^3(A,n)=\mathrm{ker}\{\pi_1\left(L\Lambda^2(A)\buildrel{L}\over\otimes
A\right)\to L_1\Lambda^3(A)\}\\
gr_2L_{3n+1}\EuScript
L^3(A,n)=\mathrm{coker}\{\Tor(\Omega_2(A),A)\to \Omega_3(A)\}
\\Y^3(A),\ i=3n\\
 A\otimes \mathbb Z/3,\ n+3\leq i< 3n-1,\ i\equiv n+3 \mod 4\\
\tor(A,\mathbb Z/3),\ n+4\leq i\leq 3n-1,\ i\equiv n\mod
4\end{cases}
$$
\noindent and  $L_i\Le^3(A, n) = 0$ for all other values of $i$.

\bigskip
\noindent{\it Case II: $n$ even}
$$
L_i\EuScript L^3(A,n)=\begin{cases} \ker\{\Tor(R_2(A), A)\to
R_3(A)\},\ i=3n+2\\ gr_1L_{3n+1}\EuScript L^3(A,n)=
\ker\{\pi_1\left(L\Gamma_2(A)\buildrel{L}\over\otimes A\right)\to
L_1\Gamma_3(A)\}\\
gr_2L_{3n+1}\EuScript L^3(A,n)=\mathrm{coker}\{\tor(R_2(A),A)\to
R_3(A)\}.\\
\Tor(A,\mathbb Z/3)\oplus \EuScript L^3(A),\ i=3n\\
 A\otimes \mathbb Z/3,\ n+3\leq i\leq 3n-1,\ i\equiv n+ 3 \mod 4\\
\Tor(A,\mathbb Z/3),\ n+4\leq i<3n-1,\ i\equiv n\mod 4
\end{cases}
$$
\noindent  and  $L_i\Le^3(A, n) = 0$ for all other values of $i$.
\end{theorem}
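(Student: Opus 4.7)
The plan is to use the long exact sequence (\ref{longsequ}) coming from the short exact sequence (\ref{sp3and}), together with Proposition \ref{derSP2} (for $L_*SP^2$), Theorem \ref{sp3an} (for $L_*SP^3$), the K\"unneth formula (for the middle term $\pi_*(A[n]\lot LSP^2(A,n))$), and the d\'ecalage identification (\ref{lie3}). Much of the argument is the same as in examples \ref{sp3a3} and in the derivation of (\ref{l3a2}), but now carried out for arbitrary $n \geq 3$.

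First, I would compute $\pi_i(A[n]\lot LSP^2(A,n))$ by K\"unneth from Proposition \ref{derSP2}. The non-trivial contributions fall into three ranges: a ``middle'' range $n+2 \leq i \leq 2n-1$ coming from the alternating $A\otimes \Z/2$ and $\tor(A,\Z/2)$ values of $L_jSP^2(A,n)$, giving after tensoring with $A[n]$ exactly the $A\otimes A\otimes \Z/2$, $\tor_1(A,A,\Z/2)$ and $\tor_2(A,A,\Z/2)$ pieces that also appear in $L_iSP^3(A,n)$ (Theorem \ref{sp3an}) via the $W_3$ filtration; a ``top'' range $i\in\{3n-1,3n,3n+1,3n+2\}$ coming from $L_{2n}SP^2(A,n)$ and $L_{2n+1}SP^2(A,n)$, whose values depend on the parity of $n$ — $\Lambda^2 A,\Omega_2 A$ for $n$ odd and $\Ga_2 A,R_2 A$ for $n$ even. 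This parity split is the source of the two-case statement in the theorem.

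Next, in the middle range $n+2\leq i\leq 3n-2$, the natural map
\[
\pi_i(A[n]\lot LSP^2(A,n))\;\la\; L_iSP^3(A,n)
\]
is induced by the multiplication $A\otimes SP^2(A)\to SP^3(A)$; via the presentation (\ref{syder}) and (\ref{defw3}), it is an isomorphism onto the $W_3$-subquotient built from $A\otimes A\otimes\Z/2$ and its derived companions, while the $A\otimes\Z/3$ and $\tor(A,\Z/3)$ pieces of $L_iSP^3(A,n)$ lie outside its image. The long exact sequence then extracts them as the values of $L_{i-1}\EuScript L^3(A,n)$, which upon reindexing gives exactly the stated $A\otimes\Z/3$ and $\tor(A,\Z/3)$ entries at $i\equiv n+3,\,n\pmod 4$.

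For the top degrees $i=3n,3n+1,3n+2$, I would invoke the three commutative diagrams displayed just before the theorem statement. Their exact rows show that the maps
\[
\pi_{3n}(A[n]\lot LSP^2(A,n))\la L_{3n}SP^3(A,n),\quad \pi_{3n+1}(\cdots)\la L_{3n+1}SP^3(A,n)
\]
are split compatibly with the $\tor_1(A,A,\Z/2)$ and $\tor_2(A,A,\Z/2)$ quotients, so that on passage to kernels and cokernels one is left with the pair $(\Lambda^2 A\otimes A \to \Lambda^3 A)$ and $(\pi_1(\Lambda^2 A\lot A)\to L_1\Lambda^3 A)$ for $n$ odd, and the parallel $\Gamma_2$-version for $n$ even. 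Reading off the cokernels at $i=3n$ yields $Y^3(A) = \EuScript L^3(A)$ in the odd case (using (\ref{sp3and})) and $\tor(A,\Z/3)\oplus\EuScript L^3(A)$ in the even case (using the short exact sequence $0\to \EuScript L^3(A)\to \Ga_2 A\otimes A\to \Ga_3 A\to A\otimes\Z/3\to 0$ from (\ref{prolong})). The two-step filtration at $i=3n+1$ and the kernel at $i=3n+2$ follow directly. Finally, the vanishing for all other $i$ follows from the corresponding vanishings in Theorem \ref{sp3an} and Proposition \ref{derSP2}.

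The main obstacle will be bookkeeping rather than genuine difficulty: one must verify that the map $\pi_i(A[n]\lot LSP^2(A,n))\to L_iSP^3(A,n)$ really does respect the $W_3$-splitting cleanly in the middle range, so that the long exact sequence decomposes into short exact fragments isolating the $3$-torsion. This reduces, via the functorial description (\ref{syder}) of $L_iSP^3$ by generators $\beta_h$ and Jacobi-type relations, to the fact that $\Tor(A,\Z/3)$ is not in the image of any map coming from $SP^2$-derived functors (as those only produce $2$-torsion and free pieces). With this in hand, each degree can be read off mechanically from the long exact sequence.
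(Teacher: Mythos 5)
Your route is the same as the paper's: derive the long exact sequence \eqref{longsequ} from \eqref{sp3and}, feed in Proposition \ref{derSP2} via K\"unneth for the middle term $\pi_*\bigl(A[n]\lot LSP^2(A,n)\bigr)$ and Theorem \ref{sp3an} for $L_*SP^3(A,n)$, and use the three commutative diagrams displayed just before the theorem to split off the $\Tor_i(A,A,\Z/2)$ quotients in the top degrees; the middle-range bookkeeping (the $2$-primary pieces match and the $3$-primary pieces of $L_{i+1}SP^3(A,n)$ survive as $L_i\Le^3(A,n)$) is likewise the argument the paper leaves implicit. Three specific points, however, need repair or completion.

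First, in the odd case at $i=3n$ you write ``$Y^3(A)=\EuScript L^3(A)$ (using \eqref{sp3and})''. This identification is wrong: the kernel appearing there is $\ker\{\Lambda^2(A)\otimes A\to\Lambda^3(A)\}=Y^3(A)\simeq\EuScript L^3_s(A)$, the \emph{super}-Lie functor of \eqref{l3sy3} and \eqref{sp3and1}, not $\EuScript L^3(A)\simeq J^3(A)$; these are distinct $\Z$-forms, and the theorem deliberately records $Y^3(A)$ here. Second, to get $L_{3n}\EuScript L^3(A,n)$ ($n$ odd) equal to that kernel on the nose you must also annihilate the contribution of $\mathrm{coker}\{\pi_1(L\Lambda^2(A)\lot A)\to L_1\Lambda^3(A)\}$ arriving from one degree higher in \eqref{longsequ}; this is exactly the surjectivity of \eqref{surjl1lam3}, which the paper flags as an explicit input and which your write-up omits. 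Third, in the even case at $i=3n$ the summand $\Tor(A,\Z/3)$ is not produced by the four-term sequence you quote (that sequence yields only the kernel $E^3(A)\simeq\EuScript L^3(A)$ and pushes $A\otimes\Z/3$ down to $L_{3n-1}$); it arises as $\mathrm{coker}\{\pi_1(L\Gamma_2(A)\lot A)\to L_1\Gamma_3(A)\}\simeq\Tor(A,\Z/3)$ via the diagram chase in \eqref{mapcub}, and the direct-sum (rather than mere extension) statement requires the suspension argument used for \eqref{l3a2}. With these three items supplied, the proposal reproduces the paper's proof.
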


\noindent  Note that in the computation of $L_{3n}\EuScript L^3(A,n)$ for
$n$ odd, we
 relied on the surjectivity of the map \eqref{surjl1lam3}.

\bigskip

\noindent{\bf Example.} The previous discussion shows in
particular that
\begin{equation}\label{exa3}
L_i\EuScript L^3(\mathbb Z/3,5)=\begin{cases} \mathbb Z/9,\
i=16,\\ \mathbb Z/3,\ i=8,9,12,13,17\\ 0,\
\text{otherwise}\end{cases}
\end{equation}
A  functorial description of some of these groups will be given in  lemma  \ref{derj3n}.
\bigskip

 For a given a free abelian group $A$ and an
integer $n\geq 2$ the composite map
$$
\EuScript L^n(A)\to \otimes^n(A)\to \EuScript L^n(A)
$$
is simply  multiplication by $n$ (\cite{Schlesinger:66} proposition 3.3). It follows that for an odd prime
$p$, the derived functors $L_i\EuScript L^p(\mathbb Z,n)$ are
$p$-groups (see (\ref{primedec})). The torsion part of  the
derived functors can be usually be  determined by  general
arguments. Recall that by  Bousfield \cite{Bou1}, corollary 9.5,
if
 $T:{\sf Ab\to Ab}$ is  a functor of finite degree which preserves
 direct limits, then  $L_qT(A,n)$ is a torsion group for every
abelian group $A$, unless $q$ is divisible by $n$.

\bigskip

The $p$-components of the derived functors of  $\EuScript
L^p$ and $J^p$ are connected by the following relation
(\cite{Schlesinger:66} proposition 4.7): for every prime $p$,
there are natural isomorphisms
$$
\pi_i\left(\mathbb Z/p\buildrel{L}\over\otimes L\EuScript
L^p(\mathbb Z,n)\right)\simeq \pi_i\left(\mathbb
Z/p\buildrel{L}\over\otimes LJ^p(\mathbb Z,n)\right),\ i\geq 0,\
n\geq 2.
$$
However the formulas for the full  derived functors $L_iJ^p(\mathbb
Z,n)$ are more complicated than those for the functors  $L_i\EuScript
L^p(\mathbb Z,n)$  (\ref{primedec}). For example, we know by
theorem \ref{derl3-1} that
\begin{align*}
& L\EuScript L^5(\mathbb Z,3)\simeq K(\mathbb Z/5, 10)
\end{align*}
so  that by \eqref{derlam2ga2}
$$
LJ^2(\mathbb Z,3)\buildrel{L}\over\otimes LJ^3(\mathbb Z,3)\simeq
K(\mathbb Z/3,12).
$$
On the other hand $ LJ^3(\mathbb Z,3)\simeq K(\mathbb Z/3,6)$ by
\eqref{primedec}, and the values of the derived functors
$L_iJ^5(\mathbb Z,3)$ now follow from those  of  $\EuScript
L^5(\mathbb Z,3)$ and the Curtis decomposition of $\EuScript L^5$.
One finds:
$$
L_iJ^5(\mathbb Z,3)\simeq \begin{cases} \mathbb Z/3,\ i=13\\
\mathbb Z/5,\ i=10\\ 0,\ i\neq 10,13\end{cases}
$$
One can  compute the groups $L_iJ^5(\mathbb Z,n)$  for a general
$n$ by similar methods . One finds that  $L_iJ^5(\mathbb Z,n)$
  is isomorphic to $
L_i\EuScript L^5(\mathbb Z,n)$ for $\ i\geq 1$ and even $n$, and
that $L_iJ^5(\mathbb Z,n)$ contains only 3-torsion and 5-torsion
elements whenever  $n$  is odd. A similar computation detects  a
non-trivial 11-torsion element in $L_{29}J^{13}(\mathbb Z,3)$,
whereas the corresponding groups $L_i\EuScript L^{13}(\mathbb Z,3)$  are
13-torsion for all $i$.

\section{Derived functors of composite functors}

\label{filder}

Consider a pair of composable functors
\[\xymatrix{ \mathcal{A} \ar[r]^G & \mathcal{B} \ar[r]^F &
  \mathcal{C} }\]
between abelian categories, and in which the categories
$\mathcal{A}$ and $\mathcal{B}$ have enough projectives. In
addition, we assume that $G(A)$ is of finite projective dimension
for each object $A \in \mathcal{A}$.
 When these functors are additive, the
composite functor spectral sequence \cite{weibel} \S 5.8
describes the derived functors of the composite functor $G\circ F$
in terms of those of $F$ and  $G$, under the condition that the
objects $G(P)$ are $F$-acyclic for any projective object $P$ of
$\mathcal{A}$ (we will refer to  this as  the
$F$-acyclicity hypothesis).  We  will
now carry out a similar discussion   when  $G$ and $F$ are no longer additive,
 in which case  chain complexes must be  replaced by
simplicial abelian groups.

\bigskip

 Let $P_\ast$ be  projective  resolution of an object $A \in
 \mathcal{A}$. Following the notations of \cite{weibel} \S 5.7, we
  construct a
 Cartan-Eilenberg  resolution  $\mathbb{P}_{\ast. \ast}$    of the
 simplicial object $G(P_\ast)$,
with $\mathbb{P}^B_{p, \ast}$ ({\it resp.}
$\mathbb{P}^Z_{p,\ast},\  \text{{\it resp.}} \ \mathbb{P}^H_{p,\ast}$) the chosen
projective resolution of $B_pG(P_\ast)$ ({\it resp.}
 $Z_pG(P_\ast),\  \text{\it resp. } L_pG(A)$). By the Dold-Kan correspondence this yields in
particular a  projective bisimplicial resolution of $LG(A))$ which we will also
denote by  $\mathbb{P}_{\ast. \ast}$, as well as a corresponding
projective  simplicial model   $\mathbb{P}^ H_{p,\ast}$   for
the Eilenberg-Mac Lane spaces $K(L_pG(A),\,p)$.
 The two filtrations on the complex associated to
the bisimplicial object
$F\mathbb{P}_{\ast. \ast}$ determine a
pair of  spectral sequences with common abutment $\pi_n(LF\circ
LG(A))$. The  initial terms of the first of these
 are given by \bee \label{ss2}
 E^2_{p,q} = L_{p+q}F\,\,(\bigoplus_{\sum_i q_i = q} \! \!K(L_{q_i}G(A),q_i))
\ee as in \cite{B-S}.
When the functor $F$ is of finite degree, we may  decompose this initial term according to
cross-effects of $F$ \cite{DoldPuppe} \S 4.18, so that the spectral sequence can be expressed
as:
\bee \label{sseq} E^2_{p,q} =  \bigoplus_{r\geq 1}
\bigoplus_{q_1
  + \ldots + q_r = q}
 L_{p+q}F_{[r]}(L_{q_1}G(A),q_1| \ldots |
L_{q_r}G(A),q_r)  \Longrightarrow \pi_{p+q}((LF\circ LG)(A))\,.
\ee The $F$-acyclicity hypothesis, which here asserts that $L_iF(G(P))=0$ for
any projective  abelian group $P$ and all $i>0$, implies that  the morphism
 $(LF\circ G)(P)
\la FG(P)$ is  a quasi-isomorphism for any projective object $P$ in $A$,
  and so is  the  induced
map \bee \label{qis} (LF \circ LG)(A) \la L(FG)(A)\,.\ee
 The   spectral sequence \eqref{sseq} can now be
written  as: \bee \label{sseq1} E^2_{p,q} =  \bigoplus_{r\geq 1}
\bigoplus_{q_1
  + \ldots + q_r = q}
 L_{p+q}F_{[r]}(L_{q_1}G(A),q_1| \ldots |
L_{q_r}G(A),q_r)  \Longrightarrow L_{p+q}(FG)(A)\,. \ee

\bigskip

Replacing the object  $A$ by the shifted derived category object
$A[n]$, in other words by the Eilenberg-Mac Lane  object $K(A,n)$,
we may now  compute under the same hypotheses the derived functors \linebreak
$L_r(FG)(A,n)$ for all $n$. The $F$-acyclicity hypothesis implies
inductively that the quasi-isomorphism \eqref{qis} determines a
quasi-isomorphism \bee \label{qis1} (LF \circ LG)(A,n) \la
L(FG)(A,n) \ee
 for all $n \geq 0$,  since we can choose  as a simplicial model
 for $K(A,n)$ the bisimplicial model
\[ \ldots \la K(A^3,n-1) \la  K(A^2,n-1)  \la  K(A,n-1) \la \{e\} \]
and work componentwise. Since  no change  is
necessary  in the discussion of   the  spectral sequence \eqref{sseq} when passing
from the case $n=0$ to the general situation, we finally obtain
for any  positive $n$, when $F$ is of finite degree and the
$F$-acyclicity hypothesis is satisfied,
 a functorial  spectral sequence:
\bee \label{sseq2} E^2_{p,q} =  \bigoplus_{r\geq 1}
\bigoplus_{q_1
  + \ldots + q_r = q}
 L_{p+q}F_{[r]}(L_{q_1}G(A,n),q_1| \ldots |
L_{q_r}G(A,n),q_r)  \Longrightarrow L_{p+q}(FG)(A,n)\,. \ee
\bigskip

We now restrict ourselves to a special  case, that  in which $F$
and $G$ are endo-functors on the category of abelian groups (or
more generally   the category of $R$-modules, with $R$ a
principal ideal  domain or even a hereditary  ring).
By construction, the  total complex of $\mathbb{P}_{\ast, \ast}$ and the
complex  $\oplus_q \,\mathbb{P}^H_{q,\ast}$  are both projective and have
as homology
$\oplus_q L_qG(A)[q]$, viewed as a complex with trivial differentials. It follows as in \cite{Dold1} \S II.4 that this identification of their
homology may be realized   by a chain homotopy equivalence between the
complexes, which in turn induces a simplicial homotopy equivalence between the
corresponding simplicial groups  $ \mathbb{P}$ and  $\oplus_q\,
\mathbb{P}^H_{q,\ast} $. The induced homotopy equivalence between
$F(\mathbb{P})$ and $F(\mathbb{P}^H_{q,\ast}) $
makes it clear that in this case the $E^2$ term of the spectral
sequence \eqref{sseq2} is (non-canonically) isomorphic to its abutment. It follows that the spectral sequence
 degenerates  at the $E^2$ level, so that this proves the following proposition:
\begin{prop}
\label{sseqprop} Let $F$ and $G$ be a pair of endofunctors on
the category of abelian groups, with $F$ of finite degree. Suppose  that for
any projective  abelian group
  $P$,  $L_qF(G(P)) =0$
whenever   $q >0$.
Then the spectral sequence \eqref{sseq2} degenerates at $E^2$, and the graded components associated to the filtration on the abutment  $L_m(FG)(A)$   of  the
spectral sequence are  described by the formula: \bee
\label{propiso}
 gr_p L_{p+q}(FG)(A,n) \simeq  \bigoplus_{r\geq 1}  \bigoplus_{q_1
  + \ldots + q_r = q}
 L_{p+q}F_{[r]}(L_{q_1}G(A,n),q_1| \ldots |
L_{q_r}G(A,n),q_r) \ee
 \end{prop}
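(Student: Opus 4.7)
The plan is to unpack the construction of the spectral sequence \eqref{sseq2} and exhibit an explicit splitting of the filtration on its abutment, which will simultaneously force degeneration at $E^2$ and identify the graded pieces. The degeneration is essentially an application of the uniqueness up to chain homotopy of projective resolutions, lifted from chain complexes to simplicial abelian groups via Dold--Kan, and then pushed through the (not necessarily additive) functor $F$ using the fact that $F$ preserves simplicial homotopy equivalences.

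Concretely, I would first recall the two projective bisimplicial objects in play: the Cartan--Eilenberg resolution $\mathbb{P}_{\ast,\ast}$ of the simplicial object $G(P_\ast)$ (built from the chosen projective resolutions $\mathbb{P}^H_{p,\ast}$ of the homology objects $L_pG(A,n)$ via the resolutions $\mathbb{P}^B_{p,\ast}$ and $\mathbb{P}^Z_{p,\ast}$), and the ``split'' object $\bigoplus_q \mathbb{P}^H_{q,\ast}$ viewed as a simplicial model for $\bigoplus_q K(L_qG(A,n),q)$. Both are projective; the associated normalized chain complexes have the same homology, namely $\bigoplus_q L_qG(A,n)[q]$ with trivial differentials. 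By the classical comparison theorem for projective chain complexes (cf. \cite{Dold1} II.4), the identity on homology lifts to a chain homotopy equivalence between these two complexes, and Dold--Kan translates this into a simplicial homotopy equivalence of the underlying bisimplicial abelian groups.

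Next, I would invoke the fact that any endofunctor on abelian groups (additive or not) sends simplicial homotopy equivalences to simplicial homotopy equivalences, because these are defined by simplicial identities that are preserved by any functor. Applying $F$ therefore produces a homotopy equivalence
\begin{equation*}
F(\mathbb{P}_{\ast,\ast}) \;\simeq\; F\Bigl(\bigoplus_{q} \mathbb{P}^H_{q,\ast}\Bigr)
\end{equation*}
of bisimplicial abelian groups. Taking $\pi_\ast$ of the total object on the left gives, by construction, the abutment $\pi_\ast\bigl((LF\circ LG)(A,n)\bigr)$, which by the $F$-acyclicity hypothesis and the quasi-isomorphism \eqref{qis1} equals $L_\ast(FG)(A,n)$. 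Taking $\pi_\ast$ on the right and decomposing $F$ applied to a direct sum via its cross-effects (as in \cite{DoldPuppe} \S 4.18, valid because $F$ is of finite degree) produces the $E^2$ term of \eqref{sseq2}. Hence $E^2$ and the abutment are abstractly isomorphic as graded groups, which forces all differentials $d^r$ for $r \geq 2$ to vanish and all extension problems to split (non-canonically), yielding the formula \eqref{propiso} for the associated graded.

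The main subtle point, and the step I would be most careful about, is the passage from the chain homotopy equivalence of the total complexes to a simplicial homotopy equivalence of the underlying bisimplicial objects before applying the non-additive functor $F$: this is exactly where non-additivity forces us to work at the simplicial rather than the chain-complex level, since an additive quasi-isomorphism would not in general be preserved by $F$. One must also check that the cross-effect decomposition matches the bigrading contributed by $\bigoplus_q \mathbb{P}^H_{q,\ast}$ under $F$, i.e.\ that the $r$-th cross-effect summand $F_{[r]}(L_{q_1}G(A,n),q_1|\ldots|L_{q_r}G(A,n),q_r)$ is correctly identified with the corresponding $(p,q)$-piece of $E^2$; this is a bookkeeping exercise using the naturality of the cross-effect splitting.
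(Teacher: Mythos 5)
Your proposal is correct and follows essentially the same route as the paper: compare the total complex of the Cartan--Eilenberg resolution with $\bigoplus_q \mathbb{P}^H_{q,\ast}$ via the projective comparison theorem (Dold, \emph{loc.\ cit.}\ II.4), upgrade the chain homotopy equivalence to a simplicial one through Dold--Kan, apply $F$ (which preserves simplicial homotopy equivalences), and conclude that $E^2$ is non-canonically isomorphic to the abutment, forcing degeneration and the stated description of the graded pieces via cross-effects. The only difference is expository: you spell out the cross-effect bookkeeping and the role of the $F$-acyclicity quasi-isomorphism \eqref{qis1} slightly more explicitly than the paper does.
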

When $F$ is one of the  functor $SP^s,\Lam^s$ or $\Ga_s$,  such an
assertion may  also be deduced, under the $F$-acyclicity
hypothesis, from the formula  \cite{illusie} V (4.2.7) of Illusie.

\subsection{The derived functors of $\Lambda^2\Lambda^2$ }

As an illustration of proposition \ref{sseqprop}, we will now
compute the derived functors of the functor
$L_i(\Lambda^2\Lambda)(A,n)$ for all
 for  $n=0,1,2$. Such results are of interest to us, since $\Lambda^2\Lambda^2$
 is the first composite functor arising in the decomposition
 \eqref{curdec} of the
 Lie functors $\Le^n A$.

\bigskip

We know by \eqref{derlam2ga2}  that
\begin{equation}
\label{lilam2}
  L_i\Lambda^2(A) =  \begin{cases} \Om_2(A),\ i=1\,\\
    \Lam^2(A),\ i=0 \\ 0, \ \ \ \ \ \ \ i\neq 0,1
\end{cases}  \qquad  \qquad L_i\Lambda^2(A,1)=\begin{cases} R_2(A),\ i=3\\
\Gamma_2(A),\ i=2,\\ 0, \ \ \ \ \ \   \      i\neq 2,3\end{cases}
\qquad
\end{equation}
\begin{equation*}
\label{lilam2a}L_i\Lambda^2(A,2)=\begin{cases}\Omega_2(A),\ \ \ \  i=5\\
\lam^2(A) ,\ i=4\\ A\otimes \mathbb Z/2,\ \ i=3\\ 0,\ \ \ \ \ \ \
\ \ \ \ \:i\neq 3,4,5\end{cases} \quad \quad
 L_i\Lambda^2(A,3) =
\begin{cases}
R_2(A),\ \ \ \ i=7\\
\Ga_2(A), \ \ \ \  i= 6\\
\tor(A,\Z/2),\ i=5\\
A \ot \Z/2,\ \ \ i=4 \\
0\ \ \ \ \ \ \ \ \ \ \ \ i \neq 4,5,6,7.
\end{cases}
\end{equation*}

\begin{equation*}
\label{lilam2b}
 L_i\Lambda^2(A,4) =
\begin{cases}
\Om_2(A), \   i=9\\
\lam^2(A), \   i= 8\\
\tor(A,\Z/2), \ i = 6\\
A \ot \Z/2, \  i = 5,7\\
0 \ \ \ \ \text{otherwise}
\end{cases} \qquad \qquad L_i\Lam^2(A,5) =
\begin{cases}
R_2(A), \  i = 11 \\
\Gamma_2(A), \ i= 10\\
\tor(A,\Z/2), \ i = 7,9 \\
A \ot \Z/2, \ i = 6,8\\
0 \ \ \ \ \text{otherwise}.
\end{cases}
\end{equation*}
with $\lam^2(A)$ defined in \eqref{newfunct}. Proposion \ref{sseqprop}
 yields the following table for the
functors $L_i\Lambda^2 \Lambda^2(A)$:

\begin{table}[H]
\def\objectstyle{\scriptstyle}
\[
\xymatrix@R=4pt@C=4pt{&&&&&&&&\\
 &\quad \quad 2&&   0  &&  R_2\Om_2(A)  &          \\
&\quad \quad 1& &\Om_2\Lam^2(A) & &   \Ga_2\Om_2(A) \oplus
\tor(\Lam^2(A)
,\Om_2(A))     &  &\\
&p= 0 && \Lam^2\Lam^2(A)&&  \Lam^2A \ot \Om^2(A)   &&\\
\ar[rrrrrr]&&&&&&\\
& & \ar[uuuu] & q= 0 && 1 &  &&& }\] \vspace{.5cm} \caption{
$gr_p(L_{p+q}(\Lam^2\Lam^2)(A))$}
\label{tab:2}
\end{table}

The functors $L_n(\Lam^2\Lam^2)(A)$ can be read off from the
$p+q=n$ line  of this table. In particular, there is a
(non-naturally split) short exact sequence
\[ 0 \la \Lam^2(A) \ot \Om_2(A) \la L_1(\Lam^2\Lam^2)(A) \la
\Om_2\Lam^2(A) \la 0\, \] which may be viewed as a symmetrized
version of a K\"{u}nneth formula for $\pi_1\left(\Lam^2(A) \lot
\Lam^2(A)\right)$.

\bigskip

We now pass to the derived functors $L_n(\Lam^2\Lam^2(A,1))$.
 The corresponding table of values  of these derived functors may be
 read off from  the values \eqref{lilam2} and \eqref{lilam2a} of the
 derived functors of $\Lam^2$:

\begin{table}[H]
\def\objectstyle{\scriptstyle}
\[
\xymatrix@R=4pt@C=4pt{
&\quad \quad &&&&\\
&\quad \quad  4&&0&R_2R_2(A)&\ \ \ \ 0\ \ \ \ &0\\
&\quad \quad  3&&\Om_2\Ga_2(A)&\Ga_2R_2(A)&0&0\\
&\quad \quad  2&&
\lam^2\Ga_2(A)
&\tor(R_2(A) ,\Z/2)&0&0\\
&\quad \quad  1&&\Ga_2(A) \ot \Z/2&R_2(A) \ot \Z/2&0&\tor(\Ga_2(A),R_2(A))\\
& p = 0   &&0&0&0&\Ga_2(A) \ot R_2(A)\\
\ar[rrrrrrr]&&&&&&&\\
& & \ar[uuuuuuu] &\hspace{-.7cm} q =
  2 &3&4&5&
}\] \vspace{.5cm} \caption{ $gr_p(L_{p+q}(\Lam^2\Lam^2)(A,1))$}
\label{tab:3}
\end{table}

When one  also takes  into account the values of $L_i\Lambda(A,n)$
 for $n= 4,5$, one finds the
following values for the derived functors of $\Lam^2\Lam^2(A,2)$:
\begin{table}[H]
\def\objectstyle{\scriptstyle}
\[
\xymatrix@R=4pt@C=4pt{
& \quad \quad 6 &&0&0&R_2\Om_2(A)&0&0&0&0&\\
& \quad \quad 5&&0&\Om_2\lam^2(A)&\Ga_2\Om_2(A)&0&0&0&0&\\
& \quad \quad 4 &&R_2(A\ot \Z/2)&\lam^2\lam^2(A)&\tor(\Om_2(A),\Z/2)&0&0&0&0&\\
& \quad \quad 3 &&\Ga_2(A\ot \Z/2)&\lam^2(A) \ot \Z/2&\Om_2(A) \ot \Z/2&0&0&0&0&\\
& \quad \quad 2&&A \ot\Z/2&\tor(\lam^2(A),\Z/2)&\tor(\Om_2(A),Z/2)&0&0&0&0&\\
& \quad \quad 1 &&A\ot \Z/2 &\lam^2A \ot \Z/2&\Om_2(A) \ot
\Z/2&0&\tor(A \ot \Z/2,\lam^2(A))     &\tor(A \ot
\Z/2,\Om_2(A))&\tor(\lam^2(A), \Om_2(A))&\\
& p = 0  &&0&0&0&0&(A\ot \Z/2) \ot \lambda^2(A) &A \ot \Z/2 \ot
\Om_2(A)
&\lam^2(A)\ot \Om_2(A)&  \\
\ar[rrrrrrrrr]&&&&&&&&&&\\
&&\ar[uuuuuuuu] & q= 3  &4&5&6&7&8&9& }\] \vspace{.5cm}
\caption{$gr_p(L_{p+q}(\Lam^2\Lam^2)(A,2))$}
\label{tab:4}
\end{table}

\subsection{The derived functors of $\Lambda^2\Gamma_2$}

We will now carry out a similar discussion for the derived functors of
$\Lambda^2\Gamma_2$.
By \eqref{derga2},
\begin{equation}
\label{derga0}
L_i\Gamma_2(A) = \begin{cases}
R_2(A) , \ i=1 \\
\Gamma_2(A) , \ i= 0 \\
0, \ \ \ \ \ \   \
i\neq 0,1\end{cases}
\quad \quad
 L_i\Gamma_2(A,1)=\begin{cases} \Om_2(A) ,\  \ i=3\\
\lambda^2(A),\ \ i=2,\\  A \ot \Z/2 , \ i=1 \\0, \ \ \ \ \ \   \ \
i\neq 1, 2,3\end{cases}
\end{equation}
 Since $\Gamma_2(P)$ is torsion-free for any
torsion-free
group $P$,  the derived functors of $\Lam^2\Ga_2$ may   be
computed by formula \eqref{propiso}.  The following tables may now be
deduced  from   \eqref{lilam2} and \eqref{derga0}:

\begin{table}[H]
\def\objectstyle{\scriptstyle}
\[
\xymatrix@R=4pt@C=4pt{&&&&&&&&\\
 &\quad \quad 2&&     && R_2R_2(A)   &          \\
&\quad \quad 1& & \Om_2 \Ga_2(A)   & & \Ga_2R_2(A) \oplus  \tor(\Ga_2(A),R_2(A))
   &  &\\
&p= 0 &&\Lam^2\Ga_2(A)  && \Ga_2(A) \ot R_2(A)  &&\\
\ar[rrrrrr]&&&&&&\\
& & \ar[uuuuu] & q= 0 &\quad \quad & 1 &  &&& }\] \vspace{.5cm} \caption{
$gr_p(L_{p+q}(\Lam^2\Gamma_2)(A))$}
\label{tab:41}
\end{table}

\begin{table}[H]
\def\objectstyle{\scriptstyle}
\[
\xymatrix@R=4pt@C=3pt{
&&&&&&&&&\\
& \quad \quad 4 & &0&0&R_2\Om_2(A)&0&0&&&\\
& \quad \quad 3 &&0&\Om_2\lam^2(A)&\Ga_2\Om_2(A)&0&0&&&&\\
& \quad \quad 2 &&R_2(A \ot \Z/2)&\lam^2\lam^2(A)&\tor(\Om_2(A), \Z/2)&0&0&&&\\
& \quad \quad 1 &&\Ga_2(A \ot \Z/2)&\lam^2(A) \ot \Z/2&\Om_2(A) \ot
\Z/2 \oplus \tor (A \ot \Z/2, \lam^2(A))&\tor (A \ot \Z/2,
\Om_2(A))&\tor(\lam^2(A), \Om_2(A)) &&&\\
&\ \  p = 0         &&0&0&A \ot \Z/2 \ot \lam^2(A)&A \ot \Z/2 \ot
\Om_2(A)&\lam^2(A) \ot \Om_2(A)&&&  \\
\ar[rrrrrrrr]  &&&&&&&&&&\\
&&\ar[uuuuuuu] & q= 1  &2&3&4&5&&& }\] \vspace{.5cm}
\caption{$gr_p(L_{p+q}(\Lam^2\Ga_2)(A,1))$}
\label{tab:42}
\end{table}

The coincidence, up to changes in degree, between certain terms in
table \ref{tab:41} and those in table \ref{tab:3}, and (more strikingly) between
certain terms in table \ref{tab:42} and those in table \ref{tab:4} is
explained by the d\'ecalage isomorphisms \eqref{bq2}  between the derived functors
of $\Gamma_2$ and those of $\Lam^2$.
\section{Derived functors of super-Lie functors}
In view of \eqref{l3sy3},  the d\'ecalage
isomorphisms(\ref{declie}) and formulas (\ref{dfr}) imply that for
$(n\geq 1)$:
\begin{equation}\label{dfr1}
L_{n+k}\EuScript L_s^3(\mathbb Z,n)=\begin{cases} \mathbb Z/3,\
k=4i+1,\
i=0, 1,\dots, [\frac{n-1}{2}]\\
0,\ \text{otherwise}
\end{cases}
\end{equation}
We will now examine the relations between the derived functors of
$\Le^n$ and $\Le^n_s$. For any free simplicial abelian group
$A_*$,   the  maps $\chi_n$ and $\bar\chi_n$ \eqref{defchin}
induce arrows :
\begin{align*}
& \chi_n^*: \pi_m\left(L\EuScript
L_s^n(A_*)\buildrel{L}\over\otimes L\Lambda^n(\mathbb
Z,1)\right)\to \pi_m(L\EuScript L^n(A)\otimes
\mathbb Z[1]),\ m\geq 0\\
& \bar\chi_n^*: \pi_m\left(L\EuScript
L^n(A_*)\buildrel{L}\over\otimes L\Lambda^n(\mathbb Z,1)\right)\to
\pi_m(L\EuScript L_s^n(A)\otimes \mathbb Z[1]),\ m\geq 0
\end{align*}
For $A_*=K(A,m)$, these determine by adjunction pension  maps
\begin{align}
\label{pensionmaps}
& \chi_n^*: L_m\EuScript L_s^n(A,k)\to L_{m+n}\EuScript L^n(A,k+1)\\
& \bar\chi_n^*: L_m\EuScript L^n(A,k)\to L_{m+n}\EuScript
L_s^n(A,k+1).\notag
\end{align}
which may be viewed as generalized d\'ecalage transformations,
even though the maps $\chi_n^\ast$ are  no longer isomorphisms. We
will for this reason refer to such maps as {\it semi-d\'ecalage}
morphisms. Similarly, the pairing $\beta_n$ \eqref{defbetan}
determines a family of pension isomophisms \eqref{declie} which we
now denote $\zeta_n$: \bee \label{def:zetan}
 \xymatrix{L_m Y^n(A,k) \ar[r]^(.4){\zeta_n}  &  L_{m+n} J^n(A,k+1)\,.
} \ee
  Proposition \ref{jfunctor} now  implies the following assertion:
\begin{theorem}\label{semidec}The following  diagram is commutative:
$$
\xymatrix@R=7pt{L_m\EuScript L_s^n(A,k) \ar@{->}[r] \ar@{->}[d]_{\chi_n^*}&
L_m
  Y^n(A,k)
 \ar@{->}[d]^{\wr}_{\zeta_n} \\
L_{m+n}\EuScript L^n(A,k+1) \ar@{->}[r] & L_{m+n} J^n(A,k+1) }
$$
\end{theorem}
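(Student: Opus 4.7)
\medskip

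\noindent\textbf{Proof plan.} My plan is to deduce the theorem from the commutativity of a single square of natural transformations at the level of underlying functors, which then propagates, by componentwise application to simplicial models and adjunction with the volume class, to the diagram of pension morphisms in the statement.

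First I would identify the four arrows. The top horizontal map is induced by the natural epimorphism $\bar p_n:\Le_s^n \to Y^n$ of \eqref{barin}, and the bottom by $p_n:\Le^n \to J^n$ of \eqref{defpn}. The left vertical $\chi_n^{*}$ is obtained, as in \eqref{pensionmaps}, by applying the pairing $\chi_n$ of \eqref{defchin} to free simplicial models for $A[k]$ and $K(\Z,1)$ and adjuncting with the volume $n$-cycle in $\Lam^n(\Z,1)_n = \Lam^n(\Z^n)$. The right vertical $\zeta_n$ is obtained in exactly the same way (see \eqref{def:zetan}) from the pairing $\beta_n$ of \eqref{defbetan}. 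Since both adjunction procedures are functorial and independent of which underlying pairing one feeds in, the theorem reduces to the commutativity, for any pair $(A,B)$ of free abelian groups, of the square of natural transformations
\[
\xymatrix@R=14pt{
\Le_s^n(A)\otimes \Lam^n(B) \ar@{->}[r]^{\chi_n} \ar@{->}[d]_{\bar p_n\otimes 1} & \Le^n(A\otimes B) \ar@{->}[d]^{p_n}\\
Y^n(A)\otimes \Lam^n(B) \ar@{->}[r]^{\beta_n} & J^n(A\otimes B).
}
\]

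Second, I would verify this functor-level square on generators of $\Le_s^n(A)$. On elements $\{a_1,\dots,a_n\}\otimes b_1\wedge\cdots\wedge b_n$, the formulas \eqref{defchin}, \eqref{barin} and \eqref{defbetan} make both routes equal to $\sum_{\sigma\in\Sigma_n}\mathrm{sign}(\sigma)\,p_n[a_1\otimes b_{\sigma_1},\dots,a_n\otimes b_{\sigma_n}]$. On the remaining square generators $\{c_1,\dots,c_k\}^{[2]}\otimes(b_1\wedge\cdots\wedge b_n)$ (present only when $n=2k$ with $k$ odd), the down--right route vanishes by \eqref{barin}; for the right--down route, $\chi_n$ sends such an element to a sum of brackets of the form $[[\,\cdot\,],[\,\cdot\,]]$ whose two inner entries are themselves length-$k$ iterated brackets, so the result lies in the ideal $\Le^2\Le^2(A\otimes B)$ and hence in $\tilde J^n(A\otimes B)=\ker p_n$ by Proposition~\ref{slez}. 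The low-degree cases $n\leq 3$, in which no squares appear and $\bar p_n,\,p_n$ are isomorphisms by \eqref{simplest}, \eqref{J2} and \eqref{l3sy3}, are immediate.

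Third, I would lift this commutativity to derived functors. Choosing a free resolution $P_\ast\to A$ and applying the square componentwise to the bisimplicial object $FK(P_\ast[k])\otimes \Lam^n(\Z,1)$, passing to the total complex and taking $\pi_{m+n}$, then adjuncting with the volume $n$-cycle in $\Lam^n(\Z,1)_n$ exactly as in the construction of the pension maps \eqref{pensionmaps} and \eqref{def:zetan}, one recovers precisely the square in the statement. Naturality of $\pi_{m+n}$ and of the adjunction ensures that the commutativity is preserved at each step.

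The main obstacle is the square-generator verification in the second step. The subtlety is that $\Le^2\Le^2$ in Proposition~\ref{slez} denotes the full ideal generated in $\Le(A\otimes B)$ by the length-four elements $[[\,\,],[\,\,]]$, not just the degree-$4$ subspace $\Lam^2\Lam^2$; one therefore has to show that a bracket $[u,v]$ with $u,v$ themselves iterated brackets of length $\geq 2$ belongs to this ideal. A short Jacobi induction, expanding the outer structure of $u$ one step at a time, rewrites $[u,v]$ as a sum of terms each containing an explicit $[[\,],[\,]]$ subexpression, which closes the argument and hence the proof.
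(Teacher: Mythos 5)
Your proof is correct and takes essentially the same route as the paper, which deduces the theorem in one line from Proposition~\ref{jfunctor}: the whole content is the functor-level identity $p_n\circ\chi_n=\beta_n\circ(\bar p_n\otimes 1)$, tautological on bracket generators by the very definition of $\beta_n$ in \eqref{defbetan}, followed by passage to derived functors and adjunction with the volume class. Your explicit verification that $\chi_n$ sends the square generators $\{a_1,\dots,a_k\}^{[2]}\otimes\omega$ into $\tilde J^n=\EuScript L^n\cap\EuScript L^2\EuScript L^2=\ker p_n$ is exactly the detail the paper leaves implicit (it handles well-definedness instead via the embedding of $\beta_n$ as a restriction of $\eta_n'$ in Proposition~\ref{jfunctor}), and your ideal-membership argument is a legitimate way to supply it.
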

We will now consider  the boundary maps:
\begin{align} \label{thetak}
& \theta_m: L_m J^n(A,k)\to L_{m-1}\tilde J^n(A,k),\\
\label{thetabark} & \bar\theta_m: L_m Y^n(A,k)\to L_{m-1}
\widetilde{Y}^n(A,k).
\end{align}
 induced   by the short
exact sequences \eqref{defjntilde} and \eqref{exactyn}. The following proposition is a corollary of theorem \ref{semidec}:
\begin{prop}\label{longlie}  For $m, n\geq 1,$  and an abelian group $A$,
the following   diagram, in which the vertical arrows are
d\'ecalage and semi-d\'ecalage morphisms,  commutes: \bee
\label{diag:semidec} \xymatrix@R=9pt@C=32pt{ L_{m+1}Y^n(A,k)
\ar@{->}[r]^{\bar\theta_{m+1}} \ar@{->}[d]^{\wr}_{\zeta_{n}} &
L_{m} \widetilde{Y}^n(A,k) \ar@{->}[r]
 \ar@{->}[d]^{\psi_n}
& L_m\EuScript L_s^n(A,k)
 \ar@{->}[r]^{\bar{p}_n}
\ar@{->}[d]_{\chi_n^\ast} & L_mY^n(A,k)
\ar@{->}[d]^{\wr}_{\zeta_{n}}
 \\
L_{m+n+1}J^n(A,k+1)\ar@{->}[r]^(.53){\theta_{m+n+1}} & L_{m+n}\tilde
J^n(A,k+1) \ar@{->}[r] & L_{m+n}\EuScript L^n(A,k+1)
\ar@{->}[r]^{p_n} & L_{m+n}J^n(A,k+1)} \ee
\end{prop}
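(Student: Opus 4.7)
The plan is to realize the displayed diagram as the morphism of derived long exact sequences associated to a single morphism of short exact sequences of functors, bridging \eqref{defjntilde} and \eqref{exactyn}. The right-hand square is precisely theorem \ref{semidec}, so the work lies in constructing $\psi_n$ and checking the middle and left squares.

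First I would establish that the pension map $\chi_n$ of \eqref{defchin} restricts to the tilded subfunctors, producing a natural pairing
\[
\tilde\chi_n: \widetilde Y^n(A) \ot \Lam^n(B) \la \tilde J^n(A \ot B).
\]
To see this, compute $p_n \circ \chi_n$ on generators $\{a_1,\dots,a_n\} \ot b_1\wedge\dots\wedge b_n$ using \eqref{defchin}, \eqref{defpn}; by a direct check on the universal generators, it agrees with $\beta_n \circ (\bar p_n \ot \mathrm{id})$ (essentially proposition \ref{jfunctor} read in the super setting, both sides admitting the same image in $(A\ot B) \ot SP^{n-1}(A\ot B)$ via $\eta_n'$). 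Since $\bar p_n$ annihilates $\widetilde Y^n$ by definition \eqref{exactyn}, $\chi_n$ sends $\widetilde Y^n \ot \Lam^n(B)$ into $\ker p_n = \tilde J^n(A\ot B)$, yielding $\tilde\chi_n$. Adjoining against the volume $n$-cycle in $\Lam^n(\Z,1)_n$ and deriving then gives the desired semi-d\'ecalage morphism
$\psi_n: L_m\widetilde Y^n(A,k) \to L_{m+n}\tilde J^n(A,k+1)$, paralleling the construction of $\chi_n^\ast$.

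Next I would observe that $\tilde\chi_n$, $\chi_n$ and $\beta_n$ assemble into a morphism of short exact sequences of functors
\[
\xymatrix@C=14pt{
0 \ar[r] & \widetilde Y^n(A) \ot \Lam^n(B) \ar[r] \ar[d]^{\tilde\chi_n} & \Le_s^n(A) \ot \Lam^n(B) \ar[r]^(.58){\bar p_n \ot 1} \ar[d]^{\chi_n} & Y^n(A) \ot \Lam^n(B) \ar[r] \ar[d]^{\beta_n} & 0 \\
0 \ar[r] & \tilde J^n(A \ot B) \ar[r] & \Le^n(A \ot B) \ar[r]^{p_n} & J^n(A \ot B) \ar[r] & 0,
}
\]
both rows being exact on free abelian groups. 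Replacing $A$ by $K(A,k)$ and $B$ by $\Lam^n(\Z,1)$ (realized via flat simplicial resolutions), passing to homotopy and adjoining against the volume element converts this into a morphism between the derived long exact sequences associated to \eqref{exactyn} (top row) and \eqref{defjntilde} (bottom row). Its vertical arrows are exactly $\zeta_n$, $\psi_n$, $\chi_n^\ast$ and $\zeta_n$, and its three commutative squares — including compatibility with the connecting morphisms $\bar\theta_{m+1}$ and $\theta_{m+n+1}$ — are the assertion of the proposition.

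The principal technical point is the restriction statement $\chi_n(\widetilde Y^n \ot \Lam^n) \subset \tilde J^n$, which rests on the identity $p_n \circ \chi_n = \beta_n \circ (\bar p_n \ot \mathrm{id})$. This is a direct verification on generators via the explicit formulas \eqref{defchin}, \eqref{defpn}, \eqref{barin}, \eqref{defbetan}; a small subtlety arises for $n$ even of the form $2k$ with $k$ odd, where the divided square generators $\{a_1,\dots,a_k\}^{[2]}$ must be handled separately using \eqref{oddc}, but $\bar p_n$ vanishes on these as well so they contribute trivially to both sides. Once this is in place, commutativity of all three squares of \eqref{diag:semidec} is the standard naturality of connecting homomorphisms for morphisms of short exact sequences of simplicial abelian groups.
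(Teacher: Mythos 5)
Your proof is correct and follows essentially the same route as the paper, which derives the proposition as a corollary of theorem \ref{semidec}: the identity $p_n \circ \chi_n = \beta_n \circ (\bar p_n \otimes \mathrm{id})$ you verify on generators is exactly the content of that theorem (and of the definition \eqref{defbetan} of $\beta_n$ together with proposition \ref{jfunctor}), after which $\psi_n$ is the induced map on kernels and the remaining squares commute by naturality of the connecting homomorphisms for a morphism of short exact sequences of simplicial abelian groups. Your explicit treatment of the divided-square generators (which land in $\ker p_n$ since their images under $\chi_n$ are brackets of brackets) is a detail the paper leaves implicit, but it does not change the argument.
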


Let $A$ be an abelian group and $n\geq 2$. The  following
diagram, in which the vertical arrows are d\'ecalage maps, is
commutative:
\bee
\label{diag:semi1}
\xymatrix@R=9pt{\pi_{2n}\left(LSP^{n-1}(A,2)\buildrel{L}\over\otimes
A[2]\right)\ar@{->}[r] \ar@{->}[d]_{\wr}& L_{2n}SP^n(A,2)
\ar@{->>}[r] \ar@{->}[d]_{\wr}&
L_{2n-1}J^n(A,2) \ar@{->}[d]_{\wr}\\
\pi_{n}\left(L\Lambda^{n-1}(A,1)\buildrel{L}\over\otimes
A[1]\right)\ar@{->}[r] \ar@{->}[d]_{\wr}& L_{n}\Lambda^n(A,1)
\ar@{->>}[r] \ar@{->}[d]_{\wr}&
L_{n-1}Y^n(A,1) \ar@{->}[d]_{\wr}\\
\Gamma_{n-1}(A)\otimes A\ar@{->}[r] & \Gamma_n(A) \ar@{->>}[r] &
H_0C^n(A)}
\ee
It follows in  particular, by proposition \ref{jean}, that  there
exists  a natural isomorphism
\begin{equation}
\label{corprop2} L_{2n-1}J^n(A,2)\simeq
\bigoplus_{p|n}\Gamma_{n/p}(A\otimes \mathbb Z/p),
\end{equation}
which describes explicitly   the right-hand terms in diagram
\eqref{diag:semi1}.
\subsection{The fourth Lie and super-Lie functors}

We will now discuss certain  derived functors of the functors
$\Le^4$ and $\Le^4_s$.   Recall that by  \eqref{curtis4} and
\eqref{exacty4a},
$$
\tilde J^4(A)\simeq \Lambda^2\Lambda^2(A),\quad \
\widetilde{Y}^4(A)\simeq \Lambda^2\Gamma_2(A).
$$
 for any  free abelian $A$.
By  \eqref{corprop2}, the
right-hand vertical arrows in diagram \eqref{diag:semi1} for $n=4$ are:
\bee
\label{defu}
\xymatrix{  L_7J^4(A,2)\ar[r]^(.48){\sim}  &
  L_3Y^4(A,1)\ar[r]^(.47){\sim}_(.47){u}
  &\Gamma_2(A\otimes \mathbb
Z/2)}.
\ee
\begin{prop}\label{sdiag} For every abelian group $A$, the arrow
$$
\xyma{L_3Y^4(A,1) \ar@{->}[r]^(.48){\bar\theta_3} &
L_2\Lambda^2\Gamma_2(A,1) }
$$
is a natural isomorphism
 between a pair of functors, both
naturally isomorphic to $\Gamma_2(A \otimes \mathbb{Z}/2)$.
\end{prop}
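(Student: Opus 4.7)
Both sides of $\bar\theta_3$ are to be identified with $\Gamma_2(A\otimes\mathbb{Z}/2)$, after which the claim reduces to checking that $\bar\theta_3$ realises this common identification. The source identification is \eqref{defu}: the d\'ecalage isomorphism $\zeta_4:L_3Y^4(A,1)\xrightarrow{\sim}L_7J^4(A,2)$ from \eqref{def:zetan}, followed by the map $u$ of \eqref{corprop2} obtained through Proposition \ref{jean}(2) and diagram \eqref{diag:semi1}. For the target I would invoke the composite functor spectral sequence of Proposition \ref{sseqprop} with $F=\Lambda^2$ and $G=\Gamma_2$: since $\Gamma_2$ carries free abelian groups to free abelian groups, the $F$-acyclicity hypothesis holds, and the spectral sequence degenerates at $E^2$. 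As displayed in Table \ref{tab:42}, the only contribution in total degree $p+q=2$ comes from $(p,q)=(1,1)$, namely $L_2\Lambda^2(L_1\Gamma_2(A,1),1)\simeq L_2\Lambda^2(A\otimes\mathbb{Z}/2,1)\simeq \Gamma_2(A\otimes\mathbb{Z}/2)$ by \eqref{derlam2ga2}.

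To see that $\bar\theta_3$ is an isomorphism I would then apply the long exact sequence of derived functors attached to \eqref{exacty4}:
\[
\cdots\to L_3\Le_s^4(A,1)\to L_3Y^4(A,1)\xrightarrow{\bar\theta_3}L_2\Lambda^2\Gamma_2(A,1)\to L_2\Le_s^4(A,1)\to L_2Y^4(A,1)\to L_1\Lambda^2\Gamma_2(A,1)=0,
\]
the final vanishing again being read off Table \ref{tab:42}. It remains to check that both flanking connecting maps are zero, for which I would transfer the problem to the Lie side via the semi-d\'ecalage commutative square of Proposition \ref{longlie} with $(m,n,k)=(2,4,1)$:
\[
\xymatrix@R=12pt{L_3Y^4(A,1) \ar[r]^{\bar\theta_3} \ar[d]_{\zeta_4}^{\wr} & L_2\Lambda^2\Gamma_2(A,1) \ar[d]^{\psi_4} \\ L_7J^4(A,2) \ar[r]^(.42){\theta_7} & L_6\Lambda^2\Lambda^2(A,2),}
\]
in which $\zeta_4$ is a d\'ecalage isomorphism and the bottom row is the connecting morphism for the Curtis sequence \eqref{curtis4} applied to $K(A,2)$.

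The main obstacle is then the joint analysis of $\theta_7$ and $\psi_4$: by Table \ref{tab:4}, $L_6\Lambda^2\Lambda^2(A,2)$ carries a three-step filtration whose top graded piece $gr_3$ is $\Gamma_2(A\otimes\mathbb{Z}/2)$, and one must verify that $\theta_7$ factors as the canonical inclusion $\Gamma_2(A\otimes\mathbb{Z}/2)\hookrightarrow gr_3L_6\Lambda^2\Lambda^2(A,2)\hookrightarrow L_6\Lambda^2\Lambda^2(A,2)$, with $\psi_4$ realising the corresponding identification of $L_2\Lambda^2\Gamma_2(A,1)$ with this same top piece. I would carry this out by an explicit simplicial computation on the standard two-term model \eqref{lowdeg} for $K(A,1)$, choosing a free resolution $L\to M\twoheadrightarrow A$ and tracking a typical generator $\gamma_2(a_1)\wedge\gamma_2(a_2)\in\Lambda^2\Gamma_2(L\oplus s_0M)$ through the inclusion $j$ of \eqref{defj} into $\Le_s^4$ and through the resulting simplicial differentials. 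This should identify $\bar\theta_3(\gamma_2(\bar a)\gamma_2(\bar b))$ with $\gamma_2(\bar a)\gamma_2(\bar b)\in\Gamma_2(A\otimes\mathbb{Z}/2)$ on the target, and naturality in $A$ promotes the resulting isomorphism on free abelian groups to all abelian groups.
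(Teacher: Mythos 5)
Your identifications of the two sides with $\Gamma_2(A\otimes \mathbb{Z}/2)$ — the source via \eqref{defu} and the target via the degenerate composite-functor spectral sequence recorded in table \ref{tab:42} — coincide with the paper's, and the simplicial computation you sketch at the end (pushing a generator $\gamma_2(a_1)\wedge\gamma_2(a_2)$ through the inclusion $j$ of \eqref{defj} into $\EuScript L_s^4$ on the model \eqref{lowdeg}) is essentially the paper's argument, except that it proves something other than what you say it does: exhibiting $\{s_0(a),s_1(a),s_0(a),s_1(a)\}$ as a boundary shows that the map $L_2\Lambda^2\Gamma_2(A,1)\to L_2\EuScript L_s^4(A,1)$ is zero, hence that $\bar\theta_3$ is \emph{surjective}; it does not evaluate $\bar\theta_3$ on a class of $L_3Y^4(A,1)$, which would require lifting a degree-$3$ cycle of $Y^4$ to $\EuScript L_s^4$ and computing its boundary.

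The genuine gap is on the injectivity side. You reduce the proposition to the vanishing of the map $L_3\EuScript L_s^4(A,1)\to L_3Y^4(A,1)$, which is exactly equivalent to what is to be proved, and your proposed verification via $\theta_7$ and $\psi_4$ has two defects. First, in table \ref{tab:4} the piece $\Gamma_2(A\otimes\mathbb{Z}/2)$ of $L_6\Lambda^2\Lambda^2(A,2)$ is the \emph{top graded quotient} of the filtration (the target of the edge homomorphism $w$), not a subobject, so the ``canonical inclusion'' you want $\theta_7$ to factor through does not exist a priori; that $w$ is split by $\theta_7$ is precisely proposition \ref{propo4}, whose proof in the paper \emph{uses} proposition \ref{sdiag}, so invoking it here is circular unless you prove the injectivity of $\theta_7$ independently — a degree-$7$ computation on a simplicial model of $K(A,2)$ that is far heavier than anything you outline. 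Second, the closing step ``naturality promotes the isomorphism on free abelian groups to all abelian groups'' is not valid for these derived functors (compare the natural map $\tor(\Omega_2(A),A)\to \Omega_3(A)$ of remark \ref{ourremark}, an isomorphism for free $A$ but not in general). The paper avoids injectivity altogether: having shown $\bar\theta_3$ surjective for every $A$ and identified both sides with $\Gamma_2(A\otimes\mathbb{Z}/2)$, it reduces to finitely generated $A$ by filtered colimits and observes that a surjection between finite groups of the same order is an isomorphism. That cardinality argument is the idea your proof is missing.
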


\begin{proof}
Let us first verify that the map $\bar{\theta}_3$ is surjective.
Consider the simplicial model \eqref{lowdeg} of
$L\Lambda^2\Gamma_2(A,1)$ determined by  a flat resolution
\eqref{res11} of $A$.
We define a map
\[ \Ga_2(A \otimes \mathbb{Z}/2) \stackrel{v}{\la}  L_2\Lambda^2\Gamma_2(A,1)\]
explicitly as follows:
$$
\begin{array}{ccc}
\Gamma_2(A\otimes \mathbb Z/2) &\to  & \Lambda^2\Gamma_2(A_1\oplus
s_0(A_0)\oplus s_1(A_0))/\partial_0(ker(\partial_1)\cap
ker(\partial_2)\cap ker(\partial_3))\\
\gamma_2(\bar a)& \mapsto& \gamma_2(s_0(a))\wedge \gamma_2(s_1(a))
\end{array}\ $$
for some lift $a$   to $M$ of $\bar{a} \in A \otimes
\mathbb{Z}/2$. Under the natural transformation $\Lambda^2\Gamma_2
\la \EuScript L_s^4\ $ \eqref{defj}, the image of
$\gamma_2(\bar{a})$ goes to the element
\[
\{s_o(a),s_1(a),s_0(a),s_1(a) \}
\]
in the term $ \EuScript L_s^4(L\oplus s_0(M)\oplus s_1(M)) $ of
the corresponding simplicial model for $L\EuScript L_s^4(A,1)$.
 The
element
\begin{multline*}
\tau: =\{s_1s_0(a),s_2s_0(a),s_1s_0(a),s_2s_0(a)\}-\{s_1s_0(a),s_2s_0(a),s_1s_0(a),s_2s_1(a)\}\in\\
\EuScript L_s^4(s_0(A_1)\oplus s_1(A_1)\oplus s_2(A_1)\oplus
s_1s_0(A_0)\oplus s_2s_0(A_0)\oplus s_2s_1(A_0))
\end{multline*}
 satifies the equations  $\partial_i(\tau)=0,\ i=1,2,3$ and
$$
\partial_0(\tau)=\{s_0(a),s_1(a),s_0(a),s_1(a)\}\in \EuScript
L_s^4(A_1\oplus s_0(A_0)\oplus s_1(A_0)).
$$
It follows that  the map $L_2\Lambda^2\Gamma_2(A,1)\to
L_2\EuScript L_s^4(A,1)$ is trivial so that, by exactness of
the upper line of diagram \eqref{diag:semidec}, the arrow
$\bar{\theta}_3$ is surjective.

\bigskip

We will  now give a more explicit description of  the target
 of $\bar{\theta}_3$.  We have a natural isomorphism
$$
\xymatrix{v:  L_2\Lambda^2\Gamma_2(A,1)\ar[r]^(.55){\sim} & \Gamma_2(A\otimes
\mathbb Z/2)}
$$ since the only non-trivial total degree  2 term  in
table   \ref{tab:42}    is the expression
 $ \Gamma_2(A\otimes \mathbb Z/2)$ in bidegree (1,1). We now have
a pair of
 arrows $u$ \eqref{defu} and $v$, which
provide natural  isomorphisms between  both the  source and target of
$\bar{\theta}_3$ and the group  $\Ga_2(A \otimes \mathbb{Z}/2)$.
 We may now assume that $A$ is
finitely generated. In that case  both  source and
target of the surjective map  $\bar{\theta}_3$ are  finite groups of
the same order, so that $\bar{\theta}_3$ is an isomorphism.
\end{proof}

We know  by the description of homotopy groups of
$L\Lambda^2\Lambda^2(A,2)$
 in table \ref{tab:4}  that there is a natural projection $ L_6\Lambda^2\Lambda^2(A,2) \to \Gamma_2(A\otimes
\mathbb Z/2)$. The following proposition is a consequence of
propositions \ref{longlie} and \ref{sdiag}:

\begin{prop}\label{propo4}
The group $G:=L_6\Lambda^2\Lambda^2(A,2)$ is endowed with a
3-step descending filtration $F^iG\  (1\leq i \leq 3)$ for which the
associated graded components are described by
\[
gr_i G =
\begin{cases}
 \Ga_2(A\ot \Z/2) \quad i= 1\\
\Tor(\lambda^2(A), \Z/2) \quad i= 2\\
\Om_2(A) \ot \Z/2 \quad i=3
\end{cases}
\]
In addition,  the projection $w$ of $G$ on the highest graded component
$gr_1G$ is the map arising from the
edge-homomorphism in the  spectral sequence \eqref{sseq2}
 described in table \ref{tab:4}. This surjection $w$ is split, up to
 isomorphism, by the boundary map $\xymatrix{ L_7J^4(A,2)
   \ar[r]^{\theta_7} \ar[r] & L_6\Lambda^2\Lambda^2(A,2)  }$ provided
 by the decomposition \eqref{curdec} of $\Le^4(A)$.
\end{prop}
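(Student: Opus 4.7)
The 3-step filtration on $G = L_6 \Lam^2 \Lam^2(A,2)$ is obtained directly from the composite functor spectral sequence of Proposition \ref{sseqprop} applied to $F = G = \Lam^2$: the $\Lam^2$-acyclicity hypothesis holds since $\Lam^2(P)$ is free abelian for any free abelian $P$, so the spectral sequence degenerates at $E^2$ and converges to $L_\ast(\Lam^2\Lam^2)(A,2)$. Along the anti-diagonal $p+q=6$ of Table \ref{tab:4}, exactly three entries are non-zero, at $(p,q) = (3,3), (2,4), (1,5)$, yielding respectively $\Ga_2(A\ot\Z/2)$, $\tor(\lam^2(A),\Z/2)$ and $\Om_2(A)\ot\Z/2$. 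These are the three graded pieces, labeled $gr_1G, gr_2G, gr_3G$ with $gr_1 G$ the outermost quotient (largest $p$), and $w$ is by construction the corresponding edge homomorphism.

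Using the isomorphism $L_7 J^4(A,2) \simeq \Ga_2(A \ot \Z/2)$ of \eqref{corprop2} for $n=4$, the splitting claim amounts to the statement that $w \circ \theta_7$ is an automorphism of $\Ga_2(A\ot\Z/2)$. The strategy is to factor $\theta_7$ through the commutative square obtained from Proposition \ref{longlie} with parameters $(n,m,k) = (4,2,1)$:
\begin{equation*}
\xymatrix@C=30pt@R=12pt{
L_3 Y^4(A,1) \ar[r]^(.43){\bar\theta_3} \ar[d]_{\zeta_4}^{\wr} &
L_2 \Lam^2 \Ga_2(A,1) \ar[d]^{\psi_4} \\
L_7 J^4(A,2) \ar[r]^(.40){\theta_7} & G,
}
\end{equation*}
in which $\zeta_4$ is the d\'ecalage isomorphism of \eqref{def:zetan} and, by Proposition \ref{sdiag}, $\bar\theta_3$ is an isomorphism between two copies of $\Ga_2(A\ot\Z/2)$. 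Commutativity reduces the entire claim to showing that $w \circ \psi_4 : L_2 \Lam^2 \Ga_2(A,1) \to gr_1 G$ is an isomorphism.

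The principal obstacle is the explicit identification of $\psi_4$ at the level of the spectral sequence. By Table \ref{tab:42}, the source $L_2 \Lam^2 \Ga_2(A,1) \simeq \Ga_2(A \ot \Z/2)$ is the unique non-zero $E^\infty$ entry at total degree $2$, arising from the $(p,q)=(1,1)$ position as $L_2 \Lam^2(L_1 \Ga_2(A,1),1) = L_2 \Lam^2(A \ot \Z/2, 1) = \Ga_2(A \ot \Z/2)$. On the other side, $gr_1 G \simeq \Ga_2(A \ot \Z/2)$ arises at the $(p,q)=(3,3)$ edge of Table \ref{tab:4} as $L_6 \Lam^2(L_3 \Lam^2(A,2),3) = L_6 \Lam^2(A \ot \Z/2, 3) = \Ga_2(A \ot \Z/2)$. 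The semi-d\'ecalage $\psi_4$ is induced functorially, on the outer $\Lam^2$, by the inner d\'ecalage isomorphism $L_1 \Ga_2(A,1) \simeq L_3 \Lam^2(A,2)$ of \eqref{bq2} (both sides canonically identified with $A \ot \Z/2$); naturality of the composite functor spectral sequence then forces $\psi_4$ to carry the $(1,1)$-edge of $L_2\Lam^2\Ga_2(A,1)$ isomorphically onto the $(3,3)$-edge $gr_1 G$ of $G$, so that $w \circ \psi_4$ is an isomorphism. This in turn provides the desired splitting of $w$ by $\theta_7$. In the finitely generated case this last step can be confirmed by an order-count after checking surjectivity on the explicit generators $\ga_2(s_0(a)) \wedge \ga_2(s_1(a))$ of the simplicial model used in the proof of Proposition \ref{sdiag}, and the general case follows by compatibility with direct limits in $A$.
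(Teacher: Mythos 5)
Your treatment of the filtration itself coincides with the paper's: the composite functor spectral sequence of Proposition \ref{sseqprop} degenerates at $E^2$, the three nonzero entries on the line $p+q=6$ of table \ref{tab:4} (at $(p,q)=(3,3),(2,4),(1,5)$) give the graded pieces, and $w$ is the edge projection onto the $(3,3)$ entry. The reduction of the splitting claim to the commutative square of Proposition \ref{longlie}, using that $\bar\theta_3$ is an isomorphism by Proposition \ref{sdiag} and that $\zeta_4$ is a d\'ecalage isomorphism, is also exactly the paper's route. So the skeleton is right.

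The gap is in the decisive step, namely that $w\circ\psi_4$ (equivalently $w\circ\theta_7$) is an isomorphism. You assert that $\psi_4$ is ``induced on the outer $\Lam^2$ by the inner d\'ecalage'' and that ``naturality of the composite functor spectral sequence forces'' the $(1,1)$-entry of table \ref{tab:42} to map isomorphically onto the $(3,3)$-entry of table \ref{tab:4}. Even granting the first claim (itself unverified: $\psi_4$ is defined via the bracket pairings $\chi_4$ and the embeddings $j$ and $(a\wedge b)\wedge(c\wedge d)\mapsto[[a,b],[c,d]]$, and identifying its restriction to $\widetilde{Y}^4\to\tilde J^4$ with ``$\Lam^2$ of the pension'' is a computation), the conclusion does not follow. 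A filtration-compatible map of spectral sequences induces \emph{some} natural transformation between the two copies of $\Ga_2(A\ot\Z/2)$, but nothing forces it to be an isomorphism rather than, say, multiplication by $2$ or zero: the induced map on outer terms has the shape $L_2\Lam^2(B,1)\to L_6\Lam^2(B,3)$ with $B=A\ot\Z/2$, which is not one of the canonical d\'ecalage isomorphisms \eqref{bq1}--\eqref{bq2}; the entire reason the paper distinguishes d\'ecalage from semi-d\'ecalage is that such shifted comparison maps are in general not isomorphisms. Your fallback (``order-count after checking surjectivity on the explicit generators'') presupposes the surjectivity, which is exactly what must be proved. The paper closes this differently: it first establishes injectivity of $\psi_4$ by examining the decompositions \eqref{propiso} under d\'ecalage, then specializes to $A$ free abelian of finite rank, where the whole line $p+q=6$ of table \ref{tab:4} collapses to the single entry $\Ga_2(A\ot\Z/2)$, so that an injection between finite groups of equal order is an isomorphism; surjectivity of $w\circ\theta_7$ for arbitrary $A$ then follows from the free case by right-exactness, and the general statement by an order count for finitely generated $A$ and passage to direct limits. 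You need some version of this argument (or a genuine verification on generators) to complete the proof.
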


\begin{proof}
The associated graded  terms $\Omega_2(A)\otimes \mathbb Z/2$ and
$\Tor(\lambda^2(A), \mathbb Z/2)$
 in the  line $p+q=6$ of table \ref{tab:4} give us  the required description of
 $F^2G = \text{ker}(w)$.
The previous discussion provides us with a commutative diagram
\[ \xymatrix{\Ga_2(A\ot \Z/2) \ar[r]^{\simeq}_u & L_3Y^4(A,1) \ar[d]^{\simeq}
\ar[r]^(.47){\simeq}_(.47){\bar{\theta}_3} &
L_2(\Lambda^2\Ga_2)(A,1)
\ar@{^{(}->}[d]^{\psi_4}&\\
&L_7J^4(A,2) \ar[r]^(.45){\theta_7} & L_6(\Lambda^2\Lambda^2)(A,2)
\ar[r]^(.53)w & \Ga_2(A\ot \Z/2) }
\]
where the injectivity of the map $\psi_4$ \eqref{diag:semidec}
is obtained by  examining the behavior of the decompositions
\eqref{propiso} of its source and target under d\'ecalage.
It remains to show that the composite map
$$
w\circ\theta_7: L_7J^4(A,2)\to \Gamma_2(A\otimes \mathbb Z/2)
$$
is an isomorphism.  When $A$ is  free
abelian of finite rank,
$L_6\Lambda^2\Lambda^2(A,2)\simeq \Gamma_2(A\otimes \mathbb Z/2)$, so that
the injective map  $$ \psi_4:\Gamma_2(A\otimes \mathbb Z/2)\hookrightarrow
L_6\Lambda^2\Lambda^2(A,2)
$$ is a monomorphism between two finite groups of the same order.
It follows that  the map $w\circ \theta_7$ is an isomorphism
whenever $A$ is free
abelian,  and therefore an epimorphism for an  arbitrary abelian group
$A$.  Returning to the case of an  abelian group $A$ of finite rank,
 we conclude that the epimorphism $w\circ \theta_7$ is an
isomorphism, since  source and target are finite  groups of the same
order. This implies that the corresponding assertion is true  for an arbitrary
 abelian  group.
\end{proof}

In the sequel, we will also need the following result, which
follows since the only non-trivial terms contributed  by the
 Curtis decomposition to
$L_i\EuScript L^4(A,2)$ for $i <7$
are those provided by the derived functors of $\Lam^2\Lam^2$:
\begin{cor}
There group   $L_6\EuScript L^4(A,2)$ is canonically isomorphic to
the direct sum of the two following expressions:
\begin{align*}
& gr_1L_6\EuScript L^4(A,2)= \tor (\lambda^2(A), \Z/2) =
\Tor(\Lambda^2(A),\mathbb Z/2)\oplus
\Tor_2(A,\mathbb Z/2,\mathbb Z/2)\\
& gr_2L_6\EuScript L^4(A,2)=\Omega_2(A)\otimes \mathbb Z/2.
\end{align*}
\end{cor}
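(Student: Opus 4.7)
\noi\textbf{Proof plan.} The plan is to apply the derived functor formalism to the Curtis short exact sequence \eqref{curtis4}
\[
0\to \Lambda^2\Lambda^2(A) \to \EuScript L^4(A) \to J^4(A)\to 0
\]
at suspension $n=2$, producing the long exact sequence
\[
\cdots \to L_{i+1}J^4(A,2) \to L_i\Lambda^2\Lambda^2(A,2) \to L_i\EuScript L^4(A,2) \to L_iJ^4(A,2) \to L_{i-1}\Lambda^2\Lambda^2(A,2) \to \cdots
\]
from which I would extract the term $i=6$.

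First, the d\'ecalage isomorphism \eqref{declie} rewrites $L_iJ^4(A,2)\cong L_{i-4}Y^4(A,1)$. Combining the Koszul-type sequence \eqref{defjn} for $n=4$ with the derived functors of $\Lambda^3$ and $\Lambda^4$ in suspension $1$ supplied by \eqref{derlam2ga2}, together with the K\"unneth description of $\pi_\ast(A[1]\lot L\Lambda^3(A,1))$, one verifies $L_iY^4(A,1)=0$ for $i=1,2$ while $L_3Y^4(A,1)\cong \Gamma_2(A\otimes \Z/2)$, the latter being proposition \ref{sdiag} (and consistent with \eqref{corprop2}). Hence $L_5J^4(A,2)=L_6J^4(A,2)=0$ and $L_7J^4(A,2)\cong \Gamma_2(A\otimes \Z/2)$.

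Second, these vanishings collapse the relevant slice of the long exact sequence to the short exact sequence
\[
0 \to L_7J^4(A,2) \stackrel{\theta_7}{\longrightarrow} L_6\Lambda^2\Lambda^2(A,2) \to L_6\EuScript L^4(A,2) \to 0.
\]
Proposition \ref{propo4} identifies $\theta_7$, up to isomorphism, with a splitting of the surjection $w$ onto the top graded component $\Gamma_2(A\otimes \Z/2)$ of the three-step filtration $F^iG$ on $G:=L_6\Lambda^2\Lambda^2(A,2)$. The cokernel of $\theta_7$ is therefore $F^2G$, whose two associated graded pieces, read off the $p+q=6$ diagonal of table \ref{tab:4}, are exactly $\Tor(\lambda^2(A),\Z/2)$ and $\Omega_2(A)\otimes \Z/2$. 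In view of the splitting $\lambda^2(A)=\Lambda^2(A)\oplus \Tor(A,\Z/2)$ from \eqref{newfunct}, one has
\[
\Tor(\lambda^2(A),\Z/2)=\Tor(\Lambda^2(A),\Z/2)\oplus \Tor_2(A,\Z/2,\Z/2),
\]
which yields the required description of $gr_1L_6\EuScript L^4(A,2)$; and $gr_2L_6\EuScript L^4(A,2)=\Omega_2(A)\otimes \Z/2$.

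The main obstacle will be the vanishing $L_2Y^4(A,1)=0$: naive Dold-Puppe connectivity only provides $L_iY^4(A,1)=0$ for $i<1$, so the sharper estimate must rest on an honest analysis of the Koszul sequence \eqref{defjn} combined with the K\"unneth-type computation of $\pi_\ast(A[1]\lot L\Lambda^3(A,1))$ mentioned above. Once this is in hand, the remainder of the argument is a diagram chase exploiting the splitting provided by proposition \ref{propo4}.
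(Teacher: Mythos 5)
Your proposal is correct and follows essentially the same route as the paper's (very terse) justification: derive the Curtis sequence \eqref{curtis4} at level $(A,2)$, note that $L_iJ^4(A,2)$ vanishes for $i\leq 6$ so that $L_6\Le^4(A,2)$ is the cokernel of $\theta_7$, and invoke proposition \ref{propo4} to identify that cokernel with $\ker(w)=F^2G$, whose graded pieces are read off the $p+q=6$ diagonal of table \ref{tab:4}. The only simplification worth noting is that the vanishing you single out as the main obstacle ($L_iY^4(A,1)=0$ for $i=1,2$, equivalently $L_iJ^4(A,2)=0$ for $i\leq 6$ via \eqref{declie}) follows at once from the connectivity bound \eqref{dp} applied to the defining sequence \eqref{defjn} of $J^4$ at suspension level $2$, where $L_iSP^3(A,2)$ and $L_iSP^4(A,2)$ vanish for $i<6$ and $i<8$ respectively, so no K\"unneth analysis at level $1$ is actually required.
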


\section{Homotopical applications}
\label{main}

\subsection{Moore spaces and the Curtis spectral sequence}
\vspace{.5cm} In this section we will review the Curtis  spectral
sequence, which will be our main tool for homotopical applications
of our theory. Recall that for any abelian group $A$ and $n\geq
2$, a Moore space in degree $n$ is defined to be a simply
connected space $X$ with $H_i(X)\simeq A$ for   $i=n$, and $
\widetilde H_i(X)=0,\ i\neq n$. Such a  Moore space will be
denoted  $M(A,n)$ when in addition an isomorphism $H_n(X)\simeq A$
is fixed. The homotopy type of $M(A,n)$ is determined by the pair
$(A,n)$, since  homology equivalence implies  homotopy
equivalence for simply-connected spaces. When  $A$ is free abelian
with a chosen basis, a Moore space $M(A,n)$ can be constructed  as
a wedge of $n$-spheres, labelled by basis elements of $A$. For an
arbitrary  abelian group $A$ and $n\geq 2$,
 an $n$-dimensional Moore space  is constructed as follows:
 choose a  2-step free  resolution
\eqref{res11}  of $A$   with chosen bases.
 $M(A,n)$
can then be defined as the mapping cone \cite{G-Z}  VI 2 of the induced
  map between the wedges
of spheres $M(L,n)\to M(M,n)$. For any homomorphism of abelian
groups $f: A\to B$, it is possible to construct a map $\phi:
M(A,n)\to M(B,n)$ such that  $H_n(\phi)=f$. However, the
construction of the map $\phi$ is not canonical and the
construction of Moore spaces is non-functorial. The canonical
class in $H^n(M(A,n), A)$ induces a map \bee \label{natmap} M(A,n)
\la K(A,n) \ee which is well-defined up to homotopy.

\bigskip

We will now recall the construction of the Curtis spectral sequence.
 Let $G$ be a simplicial group. The lower central series filtration
on $G$ gives rise to the long exact sequence
\begin{equation*}
\dots\to \pi_{i+1}(G/\gamma_r(G))\to
\pi_i(\gamma_r(G)/\gamma_{r+1}(G))\to
\pi_i(G/\gamma_{r+1}(G))\to \pi_i(G/\gamma_r(G))\to \dots
\end{equation*}
This exact sequence defines a graded exact couple, which gives
rise to a natural spectral sequence $E(G)$ with the initial terms
\begin{align*}
& E_{r,q}^1(G)=\pi_q(\gamma_{r}(G)/\gamma_{r+1}(G))
\end{align*}
and  differentials
\begin{align}
\label{dinm} & d^i_{r,q}: E_{r,q}^i(G)\to E_{r+i,\, q-1}^i(G).
\end{align}

According to
 \cite{Curtis:65}, for $K$  a
connected  simplicial set  and  $G=GK$ the associated  Kan
construction  \cite{May} \S 26,  this spectral sequence $E^i(G)$
converges to $E^\infty(G)$ and $\oplus_rE_{r,q}^\infty$ is the
graded group associated to the filtration on $\pi_q(GK)$ induced
by the lower central series filtration on $K$. Since $GK$ is a
loop group of $K$, this spectral sequence may be written as \bee
\label{curtisgen}
 E_{r,q}^1(K):=\pi_q(\gamma_{r}(GK)/\gamma_{r+1}(GK))
\Longrightarrow \pi_{q+1}(|K|). \ee
 The groups $E^1(G)$ are
homology invariants of $K$.
By the Magnus-Witt isomorphism \eqref{magnuswitt}, the spectral
sequence  can be rewritten as \bee \label{curtisgen1}
 E_{r,q}^1(K) = \pi_q(\Le^r(\widetilde{\Z}K,-1)) \Longrightarrow
 \pi_{q+1}(|K|).
\ee
 since  the abelianization $ GK_{\mathrm{ab}}:= GK/\gamma_2GK $ of
 $GK$ corresponds to
    the reduced chains
 $\widetilde{\Z}K$  on $K$, with degree shifted by 1. When $K = M(A,n)$,   $\widetilde{\Z}K$ corresponds to an
 Eilenberg-Mac Lane space $K(A,n)$ so that the spectral sequence is
 simply  of
 the form
\bee \label{curtisgen2} E^1_{r,q} = L_q \Le^r(A, n-1)
\Longrightarrow \pi_{q+1}(M(A,n))\,. \ee In particular,
\[E^1_{1,q} = \pi_q(K(A,n-1)) = \begin{cases} A, \ q= n-1\\0, \ q \neq
  n-1
\end{cases}
\]
 For a additional information regarding
this spectral sequence, see \cite{Curtis:65}, \cite{MPBook} ch. 5.
\subsection{The 3-torsion of $\pi_n(S^2)$}
As a first illustration of our techniques, we will now discuss the
3-torsion components of the homotopy groups of the sphere $S^2$.
For this,  consider the 3-torsion parts of the various terms in
the  spectral sequence \eqref{curtisgen2}, with   $A = \mathbb{Z}$
and $n=2$: \bee \label{csse1} E_{r,q}^1=L_q\EuScript L^r(\mathbb
Z,1)\Rightarrow \pi_{q+1}(S^2). \ee From now on, we will
  denote by $_pA$ the  $p$-torsion
subgroup  of   an abelian group $A$  and by $_{(p)}A$ the quotient
of $A$ by the $q$-torsion elements, for all primes $q \neq p$ . We
will refer to  this quotient  group as the
 $(p)$-torsion group of $A$.

\bigskip

It is shown in \cite{Curtis:65} (see also \cite{MPBook} props.
5.33 and 5.35) that
\begin{equation}\label{curtisres}
L_iJ^n(\mathbb Z,1)=\begin{cases} \mathbb Z,\ i=2,\ n=2\\
0,\ \text{otherwise}\end{cases}
\end{equation}
This, together with the Curtis decomposition \eqref{curdec} of the  Lie
functors and
the computation of the groups  $L_i \Lambda^2\Lambda^2(\Z,1)$ in
table \ref{tab:3}, implies that there is no 3-torsion in any of
the expressions   $L_q\EuScript L^p(\mathbb Z,1)$ for $p<6$. Let
us show that  the first non-trivial 3-torsion term  in the
spectral sequence \eqref{csse1} occurs in the group
 $L_5\EuScript L^6(\mathbb Z,1)$. It follows from
 \eqref{curtisres} and the K\"{u}nneth formula that no 3-torsion is
 produced  by either of the factors $J^6(\Z,1)$ and $J^4(\Z,1) \ot J^2(\Z,1)$
of $\EuScript L^6(\Z,1)$, nor is any contribution made by
$J^2J^3(\Z,1)$
  since
$J^3(\Z,1)$ is contractible.  It thus  follows from
\eqref{curtisres} and \eqref{dfr} (or \eqref{l3a2}) that \bee \label{lil6}
L_i\EuScript L^6(\Z,1)    \simeq    L_iJ^3J^2(\Z,1) \simeq
L_i\EuScript L^3(\Z,2)  \simeq  \begin{cases}
  \Z/3, &i =5\\0, & i \neq 5.
\end{cases}
\ee We  restate this result as: \bee \label{lil6a} LJ^3J^2(\Z,1)
\simeq K(\Z/3, 5). \ee

More generally, the  Curtis decomposition \eqref{curdec}, together
with (\ref{curtisres}) and (\ref{corprop2}), implies  that
3-torsion in the groups  $L_q\EuScript L^r(\mathbb Z,1)$ can
 only arise from   components of the
decomposition of the form  $FJ^{3^k}J^2$ and their tensor products
(for functors  $F=SP^k,\ F=J^k$), so that there is no
3-torsion in the initial terms of \eqref{csse1} unless $6|r$. The
analysis of the   $r=18$ case  is similar to that of $r=6$. The
only contribution to the 3-torsion in
 $L_q\EuScript L^r(\mathbb Z,1)$, for $q\leq 14$, comes from the
derived functors of  $J^3J^3J^2(\mathbb Z,1) $,  and by
\eqref{lil6a}:
  \[L_iJ^3J^3J^2(\mathbb Z,1) \simeq L_i\EuScript L^3(\mathbb
Z/3,5)\,.\]  These groups were computed in (\ref{exa3}), so  it
now follows  from the  connectivity result (\ref{dp}) that
\begin{equation}\label{some3}
_3L_q\EuScript L^r(\mathbb Z,1)=\mathbb Z/3,\ r=18,\ q=8,9
\end{equation}
and
$$
_3L_q\EuScript L^r(\mathbb Z,1)=0,\ 5<q<10,\ r\neq 18.
$$

 We refer to \cite{MPBook} ch. 5 for a
similar analysis of the  2-torsion components in the spectral
sequence  \eqref{csse1}.

\bigskip

For   $r\neq 12$, the 3-torsion components of
 $L_q\EuScript L^r(\mathbb Z,1)$  may all be computed  by the previous
 method so long as  $q\leq 14$,  and indeed all of
these components are trivial except for  those provided by
\eqref{lil6} and (\ref{some3}). We will now consider in detail the
case of the 12th Lie functor. We will need to introduce additional
techniques   in order to achieve  a complete understanding of the
derived functors of $\Le^{12}$ and of the differentials in  the
spectral sequence \eqref{csse1}  within the range $q \leq 14$.

\bigskip

 First observe that only the functors $J^6J^2$,    $J^3J^2J^2$, $J^2J^3J^2,$ $J^4J^2\otimes J^2J^2$   may
give any  contribution to  the 3-torsion in $L_q\Le^{12}(A, 1)$ in
degrees $q \leq 14$. By \eqref{derlam2ga2} and (\ref{l3a2}),  the
derived functors of  $J^3J^2J^2$ and $J^4J^2 \otimes J^2J^2$  are
all 2-torsion groups for $A=\Z$. It follows that that  3-torsion
in $L_q\EuScript L^{12}(\mathbb Z,1)$ within our range  can only
occur in degrees $ q= 10, 11$. In fact we  will now show that
while $J^6J^2(\Z,1) = K(\Z_6,11)$ by \eqref{corprop2}, and so
could in principle contribute to the 3-torsion of $L_{11}\EuScript
L^{12}(\mathbb Z,1)$, this is not  in fact the case:
\begin{prop}
\label{L12} The groups $ _{(3)}L_q\EuScript L^{12}(\mathbb Z,1)=0
$ are trivial for all $\ q\geq 2$.
\end{prop}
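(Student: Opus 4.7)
The plan is to combine the Curtis decomposition of $\Le^{12}(A)$ with the vanishing result \eqref{curtisres} in order to narrow the possible sources of $3$-torsion in $L_\bullet\Le^{12}(\Z,1)$, compute each remaining graded piece, and then show that the two surviving $3$-torsion classes die in the spectral sequence coming from the Curtis filtration.

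First, \eqref{curtisres} gives $L_iJ^n(\Z,1)=0$ for all $n\geq 3$, while $LJ^2(\Z,1) \simeq K(\Z,2)$ from \eqref{derlam2ga2}. Consequently a Curtis summand of $\Le^{12}(A)$ has non-trivial derived functors at $(\Z,1)$ only if its innermost nesting is $J^2$. These are precisely the summands $F(J^2(\Z,1)) = F(\Z,2)$ for $F$ running through the Curtis summands of $\Le^6$, i.e.\ $F\in\{J^6,\,J^3J^2,\,J^2J^3,\,J^4\ot J^2\}$. Two of these are immediately $2$-torsion: $J^3J^2(\Z,1) \simeq L\Le^3(\Z/2,3)$ via $J^2(\Z,2) \simeq K(\Z/2,3)$ from \eqref{derlam2ga2}, and then $2$-torsion by theorem~\ref{derl3-1}; and $J^4(\Z,2) \ot J^2(\Z,2)$, whose second tensor factor is already $2$-torsion.

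The remaining two summands contribute exactly one $3$-torsion class apiece. By \eqref{corprop2} together with the connectivity bound \eqref{dp} applied through the Koszul short exact sequence $0\to J^6B\to B\ot SP^5B\to SP^6B\to 0$ at $B=K(\Z,2)$, $LJ^6(\Z,2)$ is concentrated in degree $11$ with value $\Ga_3(\Z/2)\oplus\Ga_2(\Z/3) \simeq \Z/6$, yielding a $\Z/3$ at $q=11$. And $J^2J^3J^2(\Z,1) \simeq L\Lam^2\bigl(\Le^3(\Z,2)\bigr) \simeq L\Lam^2(\Z/3,5)$, which by \eqref{derlam2ga2} with $A=\Z/3$ and $n=5$ is concentrated in degree $10$ with value $\Ga_2(\Z/3) \simeq \Z/3$. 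Thus the associated graded of the Curtis filtration on $L\Le^{12}(\Z,1)$ carries $3$-torsion precisely at the two bidegrees $(J^6J^2,q=11)$ and $(J^2J^3J^2,q=10)$.

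The remaining step is to show that both classes die in the abutment. The plan is to exhibit a non-trivial differential
\[
 d : L_{11}\bigl(J^6J^2\bigr)(\Z,1) \longrightarrow L_{10}\bigl(J^2J^3J^2\bigr)(\Z,1)
\]
in the Curtis-filtration spectral sequence whose $3$-primary component is an isomorphism $\Z/3 \to \Z/3$. The main obstacle is producing this differential; my approach is to extract it by unfolding the Koszul presentation of $J^6$ at $B=K(\Z,2)$ and tracing the Bockstein-type boundary carried by the $\Ga_2(\Z/3)$ summand of \eqref{corprop2} through the sub-functor inclusion $\Lam^2 J^3 \hookrightarrow J^6$ realising the iterated bracket $[\Le^6,\Le^6]\subset \Le^{12}$ in a Hall basis; as a consistency check, the absence of $3$-torsion in $\pi_{11}(S^2)$ and $\pi_{12}(S^2)$ forces these two $E^1$-classes of the Curtis spectral sequence \eqref{csse1} to be simultaneously killed. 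Once the differential is non-zero, both $3$-torsion classes vanish in $L\Le^{12}(\Z,1)$, and since no other Curtis summand contributes any $3$-torsion this gives $_{(3)}L_q\Le^{12}(\Z,1)=0$ for all $q\geq 2$.
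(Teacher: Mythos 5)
Your reduction to the two candidate classes is correct and coincides with the paper's: after discarding the $2$-torsion summands $J^3J^2J^2$ and $J^4J^2\ot J^2J^2$, the only possible $3$-torsion sits in ${}_3L_{11}J^6J^2(\Z,1)\simeq\Z/3$ and ${}_3L_{10}J^2J^3J^2(\Z,1)\simeq\Z/3$, and everything comes down to showing that the connecting map of the ($3$-local) two-step filtration $0\to J^2J^3J^2\to \Le^{12}\to J^6J^2\to 0$ is an isomorphism $\Z/3\to\Z/3$. But your proof of this decisive step has a genuine gap. The primary route you sketch --- tracing a Bockstein through ``the sub-functor inclusion $\Lam^2J^3\hookrightarrow J^6$'' --- is not carried out, and no such inclusion exists in the Curtis framework: $J^2J^3$ and $J^6$ are distinct associated graded pieces of the filtration of $\Le^6$, not nested subfunctors of one another, so there is nothing through which to trace the boundary.

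The fallback ``consistency check'' does not close the gap either, for two reasons. First, knowing that ${}_3\pi_{11}(S^2)={}_3\pi_{12}(S^2)=0$ only tells you that the classes in $E^1_{12,10}$ and $E^1_{12,11}$ of \eqref{csse1} fail to survive to $E^\infty$; a class killed by a later differential of \eqref{csse1} (for instance a $d^6$ landing in ${}_3E^1_{18,9}=\Z/3$) is still a nonzero element of $L_{10}\Le^{12}(\Z,1)$, so the vanishing of the derived functor --- which is what the proposition actually asserts --- would not follow. Second, it imports Toda's values as external topological input, against the paper's purely algebraic program, and uncomfortably close to circularity since \eqref{pi2s2} is itself deduced from the present proposition. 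The paper instead embeds the question in the Curtis spectral sequence \eqref{nz1} for $K(\Z,3)$, whose abutment $\pi_*(K(\Z,2))$ is torsion-free for trivial reasons: after computing the full $3$-torsion $E^1$-page in the range $q\le 11$ (Table 6), one sees that ${}_{(3)}L_{10}\Le^6(\Z,2)=E^1_{6,10}$ can neither support nor receive any differential, hence survives to $E^\infty$ and must vanish; the exact sequence \eqref{klaq} then forces the connecting map to be surjective, hence an isomorphism, killing the $q=11$ class as well. You need an argument of this kind, or an explicit computation of the connecting map, to complete the proof.
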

{\bf Proof:}    By \eqref{curtisres}, we may think of
the
 Curtis decomposition of $\Le^{12}(\Z,1)$ as reducing    to a
 short exact sequence
\[  \xymatrix{0 \ar[r] &J^2J^3J^2(\Z,1)\ar@{->}[r] &
  \Le^{12}(\Z,1)\ar@{->}[r] &J^6J^2(\Z,1) \ar[r] & 0 } \]
when only 3-torsion is considered.
This induces following commutative diagram of finite groups with
exact horizontal lines, and boundary maps ${}_3\eta_{11}$:
\begin{equation}\label{klaq} \xymatrix@R=20pt{_3L_{11}\EuScript L^{12}(\mathbb
Z,1)\ar@{=}[d]\ar@{^{(}->}[r] & _3L_{11}J^6J^2(\mathbb Z,1)
\ar@{->}[r]^{_3\eta_{11}}\ar@{=}[d] & _3L_{10}J^2J^3J^2(\mathbb
Z,1)\ar@{=}[d]
\ar@{->>}[r] & _3L_{10}\EuScript L^{12}(\mathbb Z,1)\ar@{=}[d]\\
_3L_{11}\EuScript L^{6}(\mathbb Z,2)\ar@{^{(}->}[r] &
_3L_{11}J^6(\mathbb Z,2) \ar@{->}[r]^{_3\eta_{11}} &
_3L_{10}J^2J^3(\mathbb Z,2)
\ar@{->>}[r] & _3L_{10}\EuScript L^{6}(\mathbb Z,2)\\
& \Gamma_2(\mathbb Z_3) \ar@{=}[u] \ar@{->}[r] & \Gamma_2(\mathbb
Z_3) \ar@{=}[u] & }
\end{equation}
In this diagram, the value of $_3L_{11}J^6(\mathbb Z,2) $ was
determined by \eqref{corprop2} and that of    $
_3L_{10}J^2J^3J^2(\mathbb Z,1)  $  follows from  \eqref{lil6a}  and
\eqref{derlam2ga2}.

\bigskip

Let us  now  consider the  Curtis spectral sequence \eqref{curtisgen1}
for the space $K := K(A,n)$ for some abelian group $A$:
\begin{equation}\label{nz0}
  E^1_{r,q}=L_{q+1}\Le^r(\widetilde{\Z} K(A,n),-1) \Longrightarrow  \pi_q(K(A,n-1))
\end{equation}
We will now look at this in more detailed , for $A = \Z$:
\begin{equation}\label{nz1}
  E^1_{r,q}=L_{q+1}\Le^r(\widetilde{\Z} K(\Z,n),-1) \Longrightarrow  \pi_q(K(\Z,n-1))
\end{equation}
  By  Dold's theorem
\cite{Dold} th. 5.1, we may replace the expression  $\Z K(\Z,n)$
in the initial term of \eqref{nz1}   by  $ \oplus_i
K(\widetilde{H}_{i+1}(\Z,n),i)$ so that the spectral sequence
becomes \bee \label{nz11}
 E^1_{r,q} = \pi_{q}\Le^r(\oplus_i K(\widetilde{H}_{i+1}(\Z,n),i))
 \Longrightarrow \Z[n-1]
\ee
 In particular,
\bee \label{nz12}
 E^1_{1,q} = \widetilde{H}_{q+1}(K(\Z,n))\,.
\ee
We now consider the case $n=3$.
 The low-degree  (3)-torsion integral homology groups  of
$K(\Z,3)$ are well-known \cite{Cartan}, \cite{Decker}, in fact the
only nontrivial generators for such  groups  are the fundamental
class $i_3$ in degree 3, the  degree 7 suspension of element
$\gamma_3(i_2) \in H_6(K(\Z,2))$, and their product in degree 10
(under the multiplication induced by the $H$-space structure of
$K(\Z,3)$):
\begin{center}
\label{hz3}
\begin{tabular}{cccccccccccccc}
 $n$ & \vline & 3 & 4 & 5 & 6 & 7 & 8 & 9 & 10 & 11 & 12\\ \hline &\vline\\  $_{(3)}H_nK(\mathbb Z,3)$ & \vline &
 $\mathbb Z$ & 0 & 0 & 0 & $\mathbb
 Z/3$ & 0 & 0 & $\mathbb Z/3$ & 0 & 0
\end{tabular}
\end{center}
 When $n=3$, Dold's theorem  also allows us to  (non-functorially) compute the
other initial terms in \eqref{nz11}, since within the  range of
values of $q \leq 11$ we may replace the expression
$\widetilde{\Z} K(\Z,3)$   by the product of Eilenberg-Mac Lane
spaces $K(\Z,2) \oplus K(\Z/3,6) \oplus K(\Z/3,9)$. For $r=2$ we
must therefore compute the homotopy of the induced  $LJ^2(K(\Z,2)
\oplus K(\Z/3,6) \oplus K(\Z/3,9))$.
 No  3-torsion in the homotopy is provided by the functor $J^2$
  applied to any of the three summands, so the only non-trivial terms
  are those coming from  the cross-effect terms $\Z[2] \otimes \Z/3[6]$ and $\Z[2] \otimes
  \Z/3[9]$, in other words copies of $\Z/3$ in degrees 8 and 11 respectively.

\bigskip

 Similarly, in looking for   the  3-torsion of the  $r=3$ initial terms of
\eqref{nz1} within our range of values  $q \leq 11$,  we need only
consider the homotopy of $LJ^3(K(\Z,2) \oplus K(\Z/3,6))$. Let us
record here the functorial form of \eqref{exa3}, for all $n$ and
a more restricted  range of values of   $k$:
 \begin{lemma}
\label{derj3n} For any abelian group $A$, and integer $n >4$
\[{}_3 L_{n+k}J^3(A,n)=
 \begin{cases}  A\ot \Z_3 \quad k=3,7\\
\tor(A,\Z/3) \quad k=4,8\\
 0, \quad  k= 2,5,6
    \end{cases} \]
 \end{lemma}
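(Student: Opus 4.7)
The plan is to reduce the lemma to Theorem \ref{derl3-1} via a functorial identification $J^3 \simeq \EuScript L^3$ that holds on the whole category of abelian groups. Indeed, by Remark \ref{eliec}(i) the short exact sequence \eqref{sp3and}
\[ 0 \to \EuScript L^3(A) \to A\otimes SP^2(A) \to SP^3(A) \to 0 \]
remains exact for every abelian group $A$, and $J^3(A)$ is by definition \eqref{schur} the kernel of the multiplication arrow $A\otimes SP^2(A)\to SP^3(A)$. The projection $p_3$ of \eqref{defpn} therefore furnishes a natural isomorphism $\EuScript L^3(A)\simeq J^3(A)$ on all of $\sf Ab$, which passes to the simplicial and derived-category level, yielding $L_iJ^3(A,n)\simeq L_i\EuScript L^3(A,n)$ in every bidegree.

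Given this identification, I would read off the values of $L_{n+k}\EuScript L^3(A,n)$ directly from Theorem \ref{derl3-1}. For $n>4$ and $k\in\{2,\dots,8\}$ the value $i=n+k$ lies strictly between $n$ and the ``high degrees'' $3n$, $3n+1$, $3n+2$ (since $n+8<3n-1$ is equivalent to $n>9/2$, which is exactly the assumption), so only the two families of indices described in the bulk of the theorem can contribute; moreover, the inequalities defining those families ($n+3\le i<3n-1$ or $n+3\le i\le 3n-1$ for the $A\otimes \mathbb Z/3$ strand, and $n+4\le i\le 3n-1$ or $n+4\le i<3n-1$ for the $\mathrm{Tor}(A,\mathbb Z/3)$ strand) are all satisfied simultaneously by $n+k$ when $n\ge 5$ and $3\le k\le 8$.

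The computation then reduces to matching the residue $k \bmod 4$ against the two conditions in Theorem \ref{derl3-1}. The values $k=3,7$ give $i\equiv n+3\pmod 4$, placing us in the $A\otimes\mathbb Z/3$ strand; the values $k=4,8$ give $i\equiv n\pmod 4$, placing us in the $\mathrm{Tor}(A,\mathbb Z/3)$ strand; and $k=5,6$ give residues $n+1,n+2\pmod 4$, which belong to neither strand, forcing $L_{n+k}\EuScript L^3(A,n)=0$ by the clause of Theorem \ref{derl3-1} that collects remaining indices. The case $k=2$ falls below the smallest admissible index $n+3$, so the same vanishing clause applies. Finally, taking the $3$-primary component is tautological in the nonzero cases, since $A\otimes \mathbb Z/3$ and $\mathrm{Tor}(A,\mathbb Z/3)$ are already $3$-torsion.

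There is no substantive obstacle: the argument is essentially bookkeeping, with the only subtle point being that the hypothesis $n>4$ is sharp. One checks that $n=4$ would force $n+8=3n$, at which point the term $L_{3n}\EuScript L^3(A,n)=\mathrm{Tor}(A,\mathbb Z/3)\oplus\EuScript L^3(A)$ from Case II contaminates the answer with non-$3$-torsion, explaining the restriction.
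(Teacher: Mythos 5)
Your proposal is correct and is essentially the paper's own (implicit) argument: the lemma is stated there precisely as "the functorial form of \eqref{exa3}", i.e.\ a specialization of Theorem \ref{derl3-1} combined with the identification $\EuScript L^3\simeq J^3$ of \eqref{simplest}, and your bookkeeping of the congruence classes and of the bound $n+8<3n-1\iff n>4$ matches what the theorem requires.
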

It follows that  the summand  $LJ^3(\Z,2)$
 contributes a
 term $\Z/3$ in degree 5  to the 3-torsion of $E^1_{3,q}$\,, while  the summand  $LJ^3(\Z/3,6)$
 contributes  a pair of terms
 $\Z/3$ in degrees 9 and 10.   In addition, since the  second
 third
 cross-effect of the functor $J^3$ is  the functor
\[ J^3_{[2]}(A|B) \simeq (A \ot B \ot A) \oplus (A \ot B \ot B), \]
it  contributes  an additional term $\Z[2] \ot
\Z/3[6] \ot  \Z[2]$ to the homotopy of
 $LJ^3(K(\Z,2) \oplus
K(\Z/3,6))$ 9 in degree 10, in other words a  second  factor  $\Z/3$ to the initial
term $E^1_{3,10}$  of \eqref{nz11}.

\bigskip

There is no contribution to the 3-torsion component of the initial
terms of the spectral sequence  \eqref{nz1} for $ r= 4,5,7,8$
since none of these numbers  is a multiple of 3. If we leave aside  the
case $p=6$  for the time being, the only initial terms  which
we still need to consider are those for which  $r=9$. In our
range $q \leq 11$, the  only the summand of $\Le^9$ which comes
into play   is  $J^3J^3$  and  by   \eqref{exa3}  the homotopy
groups of $LJ^3K(\Z/3,5)$ contribute    a pair of groups $\Z/3$ to
the 3-torsion of $L\Le^9(\Z,2)$  in degrees 8 and 9.

\bigskip

We now collect in the following table the outcome of this
discussion  of the $(3)$-torsion components of the initial terms
of the spectral sequence (\ref{nz1}) for $n=3$ :

\begin{table}[H]
\begin{tabular}{ccccccccccccccccccc}
 $r$ & \vline & 1 &  2 & 3 & 4 & 5 & 6 & 7 & 8 & 9\\ \hline
 $_{(3)}E_{r,11}^1$ & \vline & 0 & $\mathbb Z/3$ & 0& 0 & 0 & $*$ & 0 & 0 & $*$\\
$_{(3)}E_{r,10}^1$ & \vline & 0 & 0 & $(\mathbb Z/3)^2$ & 0 & 0 & $_3L_{10}\EuScript L^6(\mathbb Z,2)$ & 0 & 0 & 0\\
$_{(3)}E_{r,9}^1$ & \vline & $\mathbb Z/3$ & 0 & $\mathbb Z/3$ & 0 & 0 & 0 & 0& 0 & $\mathbb Z/3$\\
$_{(3)}E_{r,8}^1$ & \vline & 0 & $\mathbb Z/3$ & 0& 0 & 0 & 0 & 0 & 0& $\mathbb Z/3$\\
$_{(3)}E_{r,7}^1$ & \vline & 0 & 0 & 0& 0 & 0 & 0 & 0 & 0 & 0\\
$_{(3)}E_{r,6}^1$ & \vline & $\mathbb Z/3$ & 0 & 0& 0 & 0 & 0 & 0 & 0 & 0\\
$_{(3)}E_{r,5}^1$ & \vline & 0 & 0 & $\mathbb Z/3$ & 0 & 0 & 0 & 0
& 0
& 0\\
$_{(3)}E_{r,4}^1$ & \vline & 0 & 0 & 0& 0 & 0 & 0 & 0 & 0
& 0\\
$_{(3)}E_{r,3}^1$ & \vline & 0 & 0 & 0& 0 & 0 & 0 & 0 & 0
& 0\\
$_{(3)}E_{r,2}^1$ & \vline & $\mathbb Z$ & 0 & 0& 0 & 0 & 0 & 0 &
0
& 0\\
\end{tabular}
\vspace{.5cm} \caption{The 3-torsion in the initial terms for the
spectral sequence
   \eqref{nz1} when $n=3$}
\end{table}

Since all the terms in the abutment of this spectral sequence
vanish
 (except for a  copy of $\Z$ in degree 2), it follows by examining the possible differentials in
the spectral sequence that the term $ _{(3)}L_{10}\EuScript
L^6(\mathbb Z,2) $ survives all the way to $E^\infty_{6,10}$ and must
therefore be  trivial. Diagram \eqref{klaq} now makes it clear
that ${}_{(3)}L_{11}\EuScript L^{12}(\mathbb Z,1) =
{}_{(3)}L_{11}\EuScript L^6(\mathbb Z,2) $  also vanishes. These
were the only possibly non-vanishing terms within our range of
degrees, so that finally: \bee \label{Le12} _{(3)}L_r\EuScript
L^{12}(\mathbb Z,1)=0,\  r \leq 14  . \ee \qed

\begin{remark}{\rm
 A direct computation shows that the triviality of
 $L_{10}\Le^6(\Z,2)$ is equivalent to the assertion the class  in $\EuScript L_s^6(\mathbb Z,1)_4,$  of the element
\begin{align*}
\zeta=&
\{\{s_2s_1s_0(a),s_2s_1s_0(a),s_3s_1s_0(a)\},\{s_3s_2s_0(a),s_3s_2s_0(a),s_3s_2s_1(a)\}\}-\\
&
\{\{s_2s_1s_0(a),s_2s_1s_0(a),s_3s_2s_0(a)\},\{s_3s_1s_0(a),s_3s_1s_0(a),s_3s_2s_1(a)\}\}+\\
&
\{\{s_3s_1s_0(a),s_3s_1s_0(a),s_3s_2s_0(a)\},\{s_2s_1s_0(a),s_2s_1s_0(a),s_3s_2s_1(a)\}\}
\end{align*}
is trivial, where  $a$ is a generator of
$\Z = \pi_1K(\mathbb Z,1)$. It would be of some interest to find a specific  element in
$\EuScript L_s^6(\mathbb Z,1)_5$ with boundary  $\zeta$.}
\end{remark}

\bigskip

We now return to the spectral sequence \eqref{csse1}, where we now
know  that $ E^1_{12,q} = 0$ for all $q \leq 14$.
 We will  now
display the entire  table of initial terms in the range $q \leq
14$:

\begin{table}[H]
\begin{tabular}{ccccccccccccccc}
 $r$ & \vline & 6 & 12 & 18 & 24 & 30 & 36 & 42 & 48 & 54 & 162\\ \hline
$_3L_{14}\EuScript L^r(\mathbb Z,1)$ & \vline & 0 & 0 & 0& 0 & 0 & 0 & 0 & 0 & 0 & $\mathbb Z/3$\\
$_3L_{13}\EuScript L^r(\mathbb Z,1)$ & \vline & 0 & 0 & $\mathbb Z/3$ & 0 & 0 & 0 & 0 & 0 & $\mathbb Z/3$ & 0\\
$_3L_{12}\EuScript L^r(\mathbb Z,1)$ & \vline & 0 & 0 & $\mathbb Z/3$ & 0 & 0 & 0 & 0 & 0 & $\mathbb Z/3\oplus \mathbb Z/3$ & 0\\
$_3L_{11}\EuScript L^r(\mathbb Z,1)$ & \vline & 0 & 0 & 0 & 0 & 0 & 0 & 0 & 0 & $\mathbb Z/3$ & 0\\
$_3L_{10}\EuScript L^r(\mathbb Z,1)$ & \vline & 0 & 0 & 0 & 0 & 0 & 0 & 0 & 0 & 0 & 0\\
$_3L_9\EuScript L^r(\mathbb Z,1)$ & \vline & 0 & 0 & $\mathbb Z/3$ & 0 & 0 & 0 & 0 & 0 & 0 & 0\\
$_3L_8\EuScript L^r(\mathbb Z,1)$ & \vline & 0 & 0 & $\mathbb Z/3$ & 0 & 0 & 0 & 0 & 0 & 0 & 0\\
$_3L_7\EuScript L^r(\mathbb Z,1)$ & \vline & 0 & 0 & 0 & 0 & 0 & 0 & 0 & 0 & 0 & 0\\
$_3L_6\EuScript L^r(\mathbb Z,1)$ & \vline & 0 & 0 & 0 & 0 & 0 & 0 & 0 & 0 & 0 & 0\\
$_3L_5\EuScript L^r(\mathbb Z,1)$ & \vline & $\mathbb Z/3$ & 0 & 0
& 0 & 0 & 0 & 0 & 0 & 0 & 0\\
\end{tabular}
\vspace{.5cm} \caption{The 3-torsion in the initial terms of the
    spectral sequence \eqref{csse1}}
\end{table}

The values of the various terms in this table are justified as
follows. Observe first of all that the
 vanishing of all terms $  E^1_{12,q}$ terms
 implies that there are  no non-zero terms  $  E^1_{r,q}$    whenever  $r$ is a
multiple of 12. Non-trivial terms with  $r= 18$ arise by applying the
functor $J^3$  according to the rule of lemma \ref{derj3n} to  the
cyclic group
  ${}_3E^1_{6,5}= \Z/3$, so that they
are contributed by  derived functors of the summands $J^3J^3J^2$
of $\Le^{18}$. Applying one more functor $J^3$  to each of the two
cyclic groups $E^1_{18,8}$ and  $E^1_{18,9}$  provides us, according to
the same rule, with two additional copies of $\Z/3$  in the columns $r= 54$.
Finally, a last composition with a $J^3$ yields the only
non-trivial term in column $r=162$ within our range  $r \leq 14$. Our
discussion makes it clear that this cyclic group has been contributed by the
appropriate derived functor of the summands  $J^3J^3J^3J^3J^2$  of
$\Le^{162}$.

\bigskip

It now follows from this discussion, by taking  into account the
possible differentials in the spectral sequence, that we have obtained
 the following description of the 3-torsion in
$\pi_i(S^2)$ in the range $i \leq 11$: \begin{equation}
\label{pi2s2}
_3\pi_i(S^2) = \begin{cases} \Z/3  \quad &i= 6,9,10\\
\, 0 & \text{otherwise}\,.
\end{cases}
\end{equation}
In addition,
$$
\pi_i(S^2)\supseteq \mathbb Z/3,\ i=13,14.
$$
We  recover in this way  by purely algebraic methods certain of
Toda's results \cite{To}.  In fact, it can be shown by comparing once more
once more the  differentials in a spectral sequence for the Moore space
\eqref{cssec} with those in the    corresponding spectral sequence  for an  Eilenberg-Mac Lane
space, and by   suspension arguments,
  that the additional differentials
$d_{18,12}^{36}: \mathbb Z/3\to \mathbb Z/3$ and $d_{18,13}^{36}:
\mathbb Z/3\to \mathbb Z/3\oplus \mathbb Z/3$ in
\eqref{csse1}  are both  monomorphisms.
 In this way, we recover algebraically  the entire  description of the 3-torsion in  $\pi_n(S^2)$ up
to degree 14.

\subsection{Some homotopy groups of $M(A,2)$}
We now consider  the spectral sequence \eqref{curtisgen2}
 for $n=2$:
\begin{equation}\label{cssec}
E_{r,q}^1=L_q\EuScript L^r(A,1)\Rightarrow \pi_{q+1}M(A,2).
\end{equation}
For $r=3$, some intial terms in this spectral  sequence  were
computed in  \S\ref{l3a1}.
 We will  now study  the terms  $E ^1_{4,q} =L_q\EuScript
L^4(A,1)$. The short exact sequences \eqref{defjntilde} and
\eqref{curtis4} derive to the horizontal lines of the two
following diagrams, while the vertical ones arise  from
semi-d\'ecalage and the computations of the groups
$L_i\Lambda^2\Lambda^2(A,1)$ in  table
\ref{tab:3}
:

\begin{equation}
\label{semi-4} \xymatrix@R=14pt{ L_1Y^4(A) \ar@{->}[r]^{\bar\theta_1}
\ar@{->}[d]^\simeq & \Lambda^2\Gamma_2(A)\ar@{->}[r]
\ar@{^{(}->}[d]
& \EuScript L_s^4(A) \ar@{->>}[r] \ar@{->}[d] & Y^4(A)\ar@{->}[d]^\simeq\\
L_5J^4(A,1) \ar@{->}[r]^{\theta_4} &
L_4\Lambda^2\Lambda^2(A,1)\ar@{->}[r]
\ar@{->>}[d] & L_4\EuScript L^4(A,1) \ar@{->>}[r] & L_4J^4(A,1)\\
& \pi_1\left(\Gamma_2(A)\buildrel{L}\over\otimes \mathbb
Z/2\right) \\
}
\end{equation}
$$
\xymatrix@R=14pt{ L_2Y^4(A) \ar@{->}[r]^{\bar\theta_2} \ar@{->}[d]^\simeq &
L_1\Lambda^2\Gamma_2(A)\ar@{->}[r]
\ar@{^{(}->}[d] & L_1\EuScript L_s^4(A) \ar@{->}[r] \ar@{->}[d] & L_1Y^4(A)\ar@{->}[d]^\simeq \\
L_6J^4(A,1) \ar@{->}[r]^{\theta_6} &
L_5\Lambda^2\Lambda^2(A,1)\ar@{->}[r] \ar@{->>}[d] & L_5\EuScript
L^4(A,1)
\ar@{->}[r] & L_5J^4(A,1)\\
& \Tor(R_2(A),\mathbb Z/2) }
$$
The computation of the $L_i\Lambda^2\Lambda^2(A)$  also implies
that there are  genuine d\'ecalage isomorphisms
\begin{equation}
\label{genuine}
  L_{i}\EuScript L_s^n(A)
\simeq  L_{i+n} \EuScript L^n(A,1)
\end{equation}
for $n=4$ whenever $i>2$. The same is true for $n=5$ and all $i$
by comparison of the derived long exact sequences associated to
the sequences \eqref{curtis5} and  \eqref{exacty5}.

\bigskip

This discussion
 provides the justification for the
description  of the columns $q= 1,2,3,5$
  of the  following table of initial terms of the spectral
sequence (\ref{cssec}):

{\scriptsize
\begin{table}[H]
\begin{align*}
& \begin{tabular}{cccccccccccccccc}
 $q$ & \vline & $E_{1,q}^1$ & \vline & $E_{2,q}^1$ & \vline & $E_{3,q}^1$ & \vline & $E_{4,q}^1$ & \vline & $E_{5,q}^1$\\
 \hline
  $7$ & \vline & 0 & \vline & 0 & \vline & 0 & \vline & $L_3\EuScript L_s^4(A)$ & \vline & $L_2\EuScript L_s^5(A)$\\
 $6$ & \vline & 0 & \vline & 0 & \vline & 0 & \vline & $L_2\EuScript L_s^4(A)$ & \vline & $L_1\EuScript L_s^5(A)$\\
$5$ & \vline & 0 & \vline & 0 & \vline & $L_2Y^3(A)$ & \vline & $\Tor(R_2(A),\mathbb Z/2) \oplus  L_1\EuScript L_s^4(A)$ & \vline & $\EuScript L_s^5(A)$\\
$4$ & \vline & 0 & \vline & 0 & \vline & $L_1Y^3(A)$ & \vline & $
\pi_1\left(L\Gamma_2(A)\buildrel{L}\over\otimes \mathbb Z/2\right)  \oplus  \EuScript L_s^4(A)$ & \vline & 0\\
$3$ & \vline & 0 & \vline & $R_2(A)$ & \vline & $Y^3(A)$ & \vline & $\Gamma_2(A)\otimes\mathbb Z/2$ & \vline & 0\\
$2$ & \vline & 0 & \vline & $\Gamma_2(A)$ & \vline & 0 & \vline & 0 & \vline & 0\\
$1$ & \vline & $A$ & \vline & 0 & \vline & 0 & \vline & 0 & \vline
& 0
\end{tabular}
\end{align*}
\begin{align*}
& \begin{tabular}{cccccccccc}
 $q$ & \vline & $E_{6,q}^1$ & \vline & $E_{7,q}^1$\\
 \hline
  $7$  & \vline &$ \Tor(R_2(A),Z/3) \oplus  \Tor(L_2Y^3(A),\mathbb Z/2)\oplus   L_1\EuScript L_s^6(A)$ & \vline
  & $ \EuScript L_s^7(A)$\\
 $6$  & \vline & $\pi_1\left(\Gamma_2(A)\buildrel{L}\over\otimes \mathbb Z/3\right)\oplus
 \pi_2\left(Y^3(A)\buildrel{L}\over\otimes \mathbb Z/2\right)\oplus
 \EuScript L_s^6(A)$ & \vline & $
0$\\
$5$  & \vline & $\Gamma_2(A)\otimes \mathbb Z/3\oplus
\pi_1\left(Y^3(A)\buildrel{L}\over\otimes\mathbb Z/2\right)$ &
\vline & 0
\\
$4$ & \vline & $Y^3(A)\otimes \mathbb Z/2$ & \vline & 0\\
$3$ & \vline & 0 & \vline & 0\\
$2$ & \vline & 0 & \vline & 0\\
$1$ & \vline & 0 & \vline & 0
\end{tabular}
\end{align*}
\begin{align*}
& \begin{tabular}{cccccccccc}
 $q$ & \vline & $E_{8,q}^1$ & \vline & $E_{9,q}^1$ & \vline & $E_{10,q}^1$\\
 \hline
 $6$  & \vline & $\pi_2\left(\Gamma_2(A)\buildrel{L}\over\otimes
\mathbb Z/2\lotimes \mathbb Z/2 \right)\oplus \pi_1\left(\EuScript
L_s^4(A)\lotimes \mathbb Z/2\right)$ & \vline &
$Y^3(A)\otimes \mathbb Z/3$ & \vline & $\EuScript L_s^5(A)\otimes \mathbb Z/2$\\
$5$  & \vline & $\pi_1\left(\Gamma_2(A)\buildrel{L}\over\otimes
\mathbb Z/2\lotimes \mathbb Z/2 \right)\oplus \EuScript
L_s^4(A)\otimes \mathbb Z/2$ & \vline & 0 & \vline & 0
\\
$4$ & \vline & $\Gamma_2(A)\otimes \mathbb Z/2$ & \vline & 0 & \vline & 0\\
$3$ & \vline & 0 & \vline & 0 & \vline & 0\\
$2$ & \vline & 0 & \vline & 0 & \vline & 0\\
$1$ & \vline & 0 & \vline & 0 & \vline & 0
\end{tabular}
\end{align*}
\caption{The $E^1$-terms of the spectral sequence
\eqref{cssec}}
\label{tab:9}
\end{table}
}

\noindent The second column  in table \ref{tab:9} follows from
\eqref{simplest}  and from the computation \eqref{derlam2ga2} for
$n=1$, and the third one from \eqref{lie3}.
The  first summand in the  term $E^1_{4,3}$ in the fourth column is
provided by the subgroup $\Lambda^2\Lambda^2(A)$ of $\Le^4(A)$
exhibited in \eqref{curtis4},  when the expression in position $(2,1)$
in table \ref{tab:3} is taken into account, and its derived versions then occur
above it. The second summand in  $E^1_{4,3}$
arises  from  the d\'ecalage isomorphism \eqref{genuine}  and  a
diagram chase in
 diagram \eqref{semi-4}. Once more,  its derived versions are then to
 be found above it.

\bigskip

  We  will now show how to
 find the terms  of
 interest to us in
 columns 6 and 8, by the methods of \S \ref{filder}.
 Those in the sixth column  in degrees
$q=4,5$  only depend on the first two summands  $J^3J^2(A)$ and
$J^2J^3(A)$ of $\EuScript L^6(A)$.
 The term $J^2J^3(A)$ in $\EuScript L^6(A)$ contributes an expression
\[ L_4(J^2J^3(A,1)) \simeq  L_4\Lambda^2(Y^3(A),3) \simeq Y^3A \otimes
\Z/2\,,\]  to $E_{6,4}^1$,  since $L_2\Lambda^2(A,1) \simeq
\Ga_2A$ and  $L_5 J^3(A,2) \simeq A \ot \Z/3$  \eqref{l3a2}. The
same  computation provides the corresponding factor in
$E^1_{6,5}$. Similarly, the term $J^3J^2(A)$  provides the
expression $\Ga_2(A) \ot \Z/3$ in  $E_{6,5}^1$, since
$L_2\Lambda^2(A,1) \simeq \Ga_2A$ and  $L_5 J^3(A,2) \simeq A \ot
\Z/3$  \eqref{l3a2}. Finally, the term
 $E^1_{8,4}$  comes from the term  $L_4J^2J^2J^2(A,1)$ in
 $L_4\EuScript L^8(A,1)$ by the same sort of  reasoning: we already know
 that \[
L_3\Lambda^2\Lambda^2(A,1) \simeq \Gamma_2(A) \ot \Z/2 \,.\] This
implies that
\[L_4 \Lambda^2(\Lambda^2\Lambda^2 (A,1)) \simeq L_4\Lambda^2(\Gamma_2A
\ot \Z/2, 3)\] and the result follows, since
\[L_4\Lambda^2(\Gamma_2A
\ot \Z/2, 3)  \simeq L_6SP^2(\Ga_2A \ot \Z/2,4) \simeq
H_6(K(\Ga_2A \ot \Z/2,4)) \simeq \Ga_2A \ot \Z/2\,,
\]
with the last isomorphism following by a direct calculation, or by
reference to the well-know Eilenberg-MacLane  functorial stable
isomorphism
\[H_6(K(B,4)) \simeq B \otimes \Z/2 \,.\]
We refer to \cite{MPBook} \S 5.5 for a more complete discussion by
one of us of the derived functors of iterates of $\Lambda^2$ when
$A = \Z$.

\bigskip

It is immediate from the line $q=2$ of table \ref{tab:9} that
\bee
\label{certain} \pi_3(M(A,2)) \simeq \Ga_2(A) \,,
\ee
a result which essentially goes back to J.H.C. Whitehead's ``certain exact
sequence'' \cite{Whitehead}, and that in particular
a generator of $\Ga_2(\Z)$ corresponds to the class of the Hopf map
$\eta: S^3 \la S^2$. By comparing the spectral sequence \eqref{cssec}
with the corresponding spectral sequence \eqref{nz0} for $n=3$, one
 verifies that the differential $d^1_{3,4}: E^1_{3,4} \la E^1_{4,3}$
 in \eqref{cssec} is trivial.
The line $q=3$  of table \ref{tab:9} then   implies that there is a  short exact
sequence
\begin{equation}
\label{moore4} 0\to \EuScript L_s^3(A)\oplus (\Gamma_2(A)\otimes
\mathbb Z/2)\to \pi_4M(A,2)\to R_2(A)\to 0\,,
\end{equation}
a result already proved in \cite{Bau}, \cite{BB}, where the expression
$\EuScript L_s^3(A)\oplus (\Gamma_2(A)\otimes \mathbb Z/2)$ is
denoted $\Ga_2^2(A)$.

\bigskip

 Similarly, the    last  two terms in
the line $q=4$ of our table, together with the factor   $\EuScript
L_s^4(A)$ from $E^1_{4,4}$, regroup to the expression denoted
$\Ga_2^3(A)$ in \cite{BB}, while  the direct sum of the two
remaining terms on the line $q=4$   correspond to   the derived
functor $L_1\Ga_2^2(A)$ of the functor $\Ga_2^2(A)$ mentioned
above. By considering
   the  restriction of the differential $d_{4,5}^4:
E_{4,5}^4\to E_{8,4}^4$ in our table to the factor $\Tor (R_2(A),
\Z /2)$ of   $ E_{4,5}^4 $ we therefore recover the description of $\pi_5M(A,2)$ in
\cite{BB} as a middle term in an  exact sequence:
\begin{equation}\label{moore5}
L_2\Gamma_2^2(A)\buildrel{d_2}\over\to \Gamma_2^3(A)\to
\pi_5M(A,2)\to L_1\Gamma_2^2(A)\to 0.
\end{equation}
where $d_2$ is a differential in the spectral sequence from
\cite{Dre} (for a  generalized version
of this  sequence, see  \cite{BG} theorem  5.1). We will verify later
on in this section (see diagram \eqref{suspdiag}) that this
restriction of   $d_{4,5}^4$ is not zero. This implies that the
corresponding differential $d_2$ in \eqref{moore5}  is also non-trivial.
This discussion is consistent with the   low-dimensional homotopy groups of
the Moore space $M(\mathbb Z/2,2)=\Sigma \mathbb RP^2$ as known
from \cite{Wu2}:
\begin{table}[H]
\begin{tabular}{cccccccccc}
 $i$ & \vline & 2 & 3 & 4 & 5 & 6 & 7\\ \hline &\vline\\  $\pi_iM(\mathbb Z/2,2)$ & \vline &
 $\mathbb Z/2$ & $\mathbb Z/4$ & $\mathbb Z/4$ & $(\mathbb Z/2)^{\oplus 3}$ & $(\mathbb
 Z/2)^{\oplus 5}$ & $(\mathbb Z/2)^{\oplus 2}\oplus (\mathbb Z/4)^{\oplus 2}\oplus \mathbb Z/8 $
\end{tabular}
\medskip
\caption{}
\label{pnmz22}
\end{table}
\bigskip

Finally, returning to the case $A = \Z$, we also observe in table
\ref{tab:9}, in positions $E^1_{2p,\, 2p-1}$  with $p$ prime, the early
occurences in ${}_p\pi_{2p}(S^{2})$  of Serre's first non-trivial
$p$-torsion in the homotopy of $S^2$ (see also for this \cite{MPBook}
corollary 5.40 and the discussion pp. 280--281).
\begin{remark}{\rm
In the spectral sequence from \cite{Dre} there are  terms
$E_{p,q}^2=L_p\Gamma_2^q(A),$ where $\Gamma_2^q(A)$ is the $q$-th
term arising  from the homotopy operation algebra. In particular,
there is  a natural homomorphism $\EuScript L_s^{q+1}(A)\to
\Gamma_2^q(A),$ where the occurence of the $(q+1)$-st super-Lie
functor is due to
 Whitehead products, viewed as homotopy operations. It is natural
to conjecture that the semi-d\'ecalage described in theorem
\ref{semidec} connects the homotopy operation spectral sequence
from \cite{Dre} with the Curtis spectral sequence, with for
example the existence of a commutative diagram

$$
\xyma{L_i\EuScript L_s^{q}(A) \ar@{->}[d]^{\hat d^2}
\ar@{->}[r]^{\chi_1^*} & L_{i+q}\EuScript L^q(A,1) \ar@{->}[d]^{d^1}\\
L_{i-2}\EuScript L_s^{q+1}(A) \ar@{->}[r]^{\chi_1^*} &
L_{i+q-1}\EuScript L^{q+1}(A,1)}
$$
where $\hat d^2$ is a natural map induced by the second
differential in the homotopy operation spectral sequence. The
low-dimensional
 computation   given above support  this
conjectural connection between the two spectral sequences.}
\end{remark}

\subsection{Some  homotopy groups of   $M(\mathbb Z/p,2)$,\ $p\neq 2$}
The next proposition provides us with some information regarding
the derived functors of $\Le^4_s(A)$. We begin with the following
lemma:
\begin{lemma}\label{y4} Let  $A=\mathbb Z/p$ for some prime  $p \neq 2$.  The natural map $\Tor(\Omega_3(A),A)\to \Omega_4(A)$ is an isomorphism.
\end{lemma}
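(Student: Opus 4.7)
The plan is to identify both source and target explicitly and then trace the natural map on generators. By \eqref{omeg-cycl}, $\Omega_k(\mathbb{Z}/p)\simeq \mathbb{Z}/p$ for every $k\geq 1$, so
\[
\Tor(\Omega_3(\mathbb{Z}/p),\mathbb{Z}/p)\simeq \Tor(\mathbb{Z}/p,\mathbb{Z}/p)\simeq \mathbb{Z}/p\simeq \Omega_4(\mathbb{Z}/p).
\]
Since both groups are cyclic of order $p$, it suffices to prove that the natural map is nonzero, and for this one may trace a generator.

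To describe the map in coordinates, I would use the $n=3\to 4$ analogue of the commutative square \eqref{diagom3}, namely
\[
\xymatrix{ \Gamma_3({}_hA)\otimes A \ar[r] \ar[d]_{\lambda^3_h\otimes 1} & \Gamma_4({}_hA) \ar[d]^{\lambda^4_h} \\ \Tor(\Omega_3(A),A) \ar[r] & \Omega_4(A), }
\]
whose top arrow is multiplication in the divided power algebra, and whose verticals are induced by the maps $\lambda^n_h$ of \eqref{deflamh}. This diagram is the direct analogue of \eqref{diagom3}, and comes from the symmetrized $\Tor$ presentation of $\Omega_n$ established in \cite{Breen} (5.14); the argument given there for $n=2\to 3$ works verbatim for $n=3\to 4$ since it only uses the compatibility of the $\lambda^n_h$ with the product structures on the two columns.

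For $A=\mathbb{Z}/p$ and $h=p$ (so that ${}_hA=A$), the element $\omega^p_3(x) = \lambda^3_p(\gamma_3(x))$ generates $\Omega_3(\mathbb{Z}/p)$ by \eqref{torprod}, and similarly $\omega^p_4(x)$ generates $\Omega_4(\mathbb{Z}/p)$. Consequently $\tau_p(\omega^p_3(x),x)$ generates $\Tor(\Omega_3(A),A)$, and by the commutativity of the diagram its image is
\[
\omega^p_3(x)\ast \omega^p_1(x)\;=\;\lambda^4_p(\gamma_3(x)\gamma_1(x))\;=\;\lambda^4_p(4\gamma_4(x))\;=\;4\,\omega^p_4(x),
\]
where we used the divided power identity $\gamma_s(x)\gamma_t(x)=\binom{s+t}{s}\gamma_{s+t}(x)$ with $s=3$, $t=1$. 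Since $p$ is odd, $4$ is a unit modulo $p$, so $4\omega^p_4(x)$ is again a generator of $\Omega_4(\mathbb{Z}/p)$, and the natural map is an isomorphism.

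The main obstacle in this plan is the justification of the commutative square above, since \eqref{diagom3} is only stated in the paper for $n=2\to 3$. Once one grants that the Breen argument extends to higher $n$ (which it does, the only input being relation (3) among divided powers together with the definition of the $\lambda^n_h$), the rest of the proof reduces to the combinatorial observation that $\binom{4}{1}=4$ is prime to any odd $p$.
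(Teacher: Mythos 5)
Your proof is correct and follows essentially the same route as the paper: both identify source and target as $\mathbb Z/p$ via \eqref{omeg-cycl}, invoke the commutative square relating $\Gamma_3({}_hA)\otimes{}_hA\to\Gamma_4({}_hA)$ to $\Tor(\Omega_3(A),A)\to\Omega_4(A)$ (which the paper states directly for $n=3\to4$, citing \cite{Breen} (5.14), so the extension you worry about is exactly what that reference supplies), and compute that the generator maps to $4\,\omega^p_4(x)$, nonzero since $p$ is odd.
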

\begin{proof}
By \cite{Breen} (5.14), there exists, for any abelian group $A$
and integer $h$, a commutative diagram of abelian groups
\begin{equation}\label{gammadi}
\xyma{\Gamma_3(\ _hA)\otimes\ _hA\ar@{->}[r] \ar@{->}[d]^{\lambda_h^3\otimes 1}& \Gamma_4(\ _hA)\ar@{->}[d]^{\lambda_4^h}\\
\Tor(\Omega_3(A),A) \ar@{->}[r] & \Omega_4(A)}
\end{equation}
where the upper horizontal arrow is induced by the multiplication
in the divided power algebra. The arrows $\lambda_h^i$ provide,
where $h$ varies, and the slide relations are taken into account,
presentations for the groups $\Tor(\Omega_3(A),A)$ and
$\Omega_4(A)$ respectively.
 Let us  now suppose that $A$ is cyclic of order $p$, with a chosen
generator $a\in A$. In that case the only relevant integer is
$h=p$. We know that $\Omega_3(\mathbb Z/p)=\Omega_4(\mathbb
Z/p)=\mathbb Z/p$ so that the lower horizontal map in
(\ref{gammadi}) is a homomorphism $\mathbb Z/p\to \mathbb Z/p$.
Let us show that this morphism is non-trivial:  the image of
$\gamma_3(a)\otimes a$ in $\Omega_4(\mathbb Z/p):$
$$
\gamma_3(a)\otimes a\mapsto 4\gamma_4(a)\mapsto 4\omega_4^p(a)
$$
and $4\,\omega_4^p(a) \neq 0$ since $p \neq 2$.
\end{proof}
\begin{prop}
 For any integer   $i\geq 0$ and any odd prime  $p$, one then has
 $$L_i\EuScript
L_s^4(\mathbb Z/p)=\begin{cases} \mathbb Z/p,\ i=1,2\\
0,\ i\neq 1,2\end{cases}$$
\end{prop}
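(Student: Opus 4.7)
The plan is to derive the short exact sequence \eqref{exacty4} applied to a free resolution of $\mathbb{Z}/p$, obtaining the long exact sequence
\[
\cdots \to L_{i+1} Y^4(A) \xrightarrow{\partial} L_i(\Lambda^2\Gamma_2)(A) \to L_i\EuScript L_s^4(A) \to L_i Y^4(A) \to L_{i-1}(\Lambda^2\Gamma_2)(A) \to \cdots
\]
and computing each outer term for $A = \mathbb{Z}/p$ with $p$ odd.

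First I would compute $L_\ast(\Lambda^2\Gamma_2)(\mathbb{Z}/p)$ via the composite-functor spectral sequence of Proposition \ref{sseqprop} (which applies because $\Gamma_2$ of a free abelian group is free, hence $\Lambda^2$-acyclic). The decisive input is that $R_2(\mathbb{Z}/p) = L_1\Gamma_2(\mathbb{Z}/p) = 0$: indeed the short exact sequence derived from \eqref{exsp2} reads $0 \to L_1 SP^2(\mathbb{Z}/p) \to R_2(\mathbb{Z}/p) \to \tor(\mathbb{Z}/p,\mathbb{Z}/2) \to 0$, and both outer terms vanish (the first because $\mathcal{S}_2$ of a cyclic group is trivial, as may be checked by the Koszul complex $Kos_2(\mathbb{Z}\xrightarrow{p}\mathbb{Z})$; the second because $p$ is odd). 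Consequently $L\Gamma_2(\mathbb{Z}/p) \simeq \mathbb{Z}/p$ is concentrated in degree zero, and Table \ref{tab:41} collapses to give
$L_i(\Lambda^2\Gamma_2)(\mathbb{Z}/p) = L_i\Lambda^2(\mathbb{Z}/p),$
which by \eqref{derlam2ga2} equals $\Omega_2(\mathbb{Z}/p) = \mathbb{Z}/p$ for $i=1$ and vanishes otherwise.

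Next I would compute $L_\ast Y^4(\mathbb{Z}/p)$ by deriving the defining sequence $0 \to Y^4(A) \to A \otimes \Lambda^3(A) \to \Lambda^4(A) \to 0$. The Koszul resolution $Kos^n(\mathbb{Z}\xrightarrow{p}\mathbb{Z})$ shows $L\Lambda^n(\mathbb{Z}/p) \simeq \mathbb{Z}/p[n-1]$, so by K\"unneth, $\pi_\ast(A \otimes^L L\Lambda^3 A)$ equals $\mathbb{Z}/p$ in degrees $2$ and $3$, with $\pi_3 \simeq \tor(\Omega_3(A),A)$. The critical connecting arrow $\pi_3(A\otimes^L L\Lambda^3 A) \to L_3\Lambda^4(A) = \Omega_4(A)$ is precisely the map of Lemma \ref{y4}, which that lemma establishes as an isomorphism for $A = \mathbb{Z}/p$ odd. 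The long exact sequence then gives $L_i Y^4(\mathbb{Z}/p) = \mathbb{Z}/p$ for $i=2$ and zero elsewhere.

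Assembling the outer long exact sequence, all entries at $i=0$ and $i\geq 3$ force $L_i\EuScript L_s^4(\mathbb{Z}/p)=0$, and the remaining segment is
\[
0 \to L_2\EuScript L_s^4(\mathbb{Z}/p) \to L_2 Y^4(\mathbb{Z}/p) \xrightarrow{\partial} L_1(\Lambda^2\Gamma_2)(\mathbb{Z}/p) \to L_1\EuScript L_s^4(\mathbb{Z}/p) \to 0,
\]
with both middle terms cyclic of order $p$. The conclusion $L_1 = L_2 = \mathbb{Z}/p$ is therefore equivalent to the vanishing of $\partial$. This is the main obstacle. To handle it I would invoke the semi-d\'ecalage diagram of Proposition \ref{longlie} for $n=4$, $k=0$: the right-hand column is a genuine d\'ecalage isomorphism $L_2 Y^4(A) \simeq L_6 J^4(A,1)$, and the top boundary $\partial$ pulls back to the Lie-case boundary $L_6 J^4(A,1) \to L_5\widetilde{J}^4(A,1) = L_5\Lambda^2\Lambda^2(A,1)$. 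The latter may be analysed using Table \ref{tab:3} together with the Curtis decomposition \eqref{curtis4} and the computation of $L_\ast\EuScript L^4(\mathbb{Z}/p,1)$; tracing the generator through these identifications should show that its image factors through a piece of $L_5\Lambda^2\Lambda^2(A,1)$ which has no preimage in $L_1(\Lambda^2\Gamma_2)(A)$ under the semi-d\'ecalage map $\psi_4$, so that $\partial$ is forced to be zero. An independent consistency check is provided by the Euler characteristic: the LES multiplicativity forces $|L_1|\cdot |L_2|^{-1} = p^{-1}\cdot p = 1$, consistent with the stated answer.
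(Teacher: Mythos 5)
Your setup is the same as the paper's: the long exact sequence derived from \eqref{exacty4}, the identification $L_i\Lambda^2\Gamma_2(\mathbb{Z}/p)=\mathbb{Z}/p$ for $i=1$ and $0$ otherwise (via $R_2(\mathbb{Z}/p)=0$), and the computation $L_iY^4(\mathbb{Z}/p)=\mathbb{Z}/p$ for $i=2$ and $0$ otherwise via Lemma \ref{y4} and the defining sequence of $Y^4$. All of that is correct and matches the paper. You have also correctly isolated the entire difficulty in the single boundary map $\partial\colon L_2Y^4(\mathbb{Z}/p)\to L_1\Lambda^2\Gamma_2(\mathbb{Z}/p)$, a map $\mathbb{Z}/p\to\mathbb{Z}/p$ whose vanishing is exactly what must be proved (your ``Euler characteristic'' check only gives $|L_1|=|L_2|$, which is equally consistent with both groups vanishing).

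The proposed resolution of that step is where the gap lies. Transporting $\partial$ through the semi-d\'ecalage square of Proposition \ref{longlie} replaces $\bar\theta_2\colon L_2Y^4(A)\to L_1\widetilde{Y}^4(A)$ by $\theta_6\colon L_6J^4(A,1)\to L_5\Lambda^2\Lambda^2(A,1)$, but for $A=\mathbb{Z}/p$ with $p$ odd this buys nothing: by Table \ref{tab:3} the only surviving graded piece of $L_5\Lambda^2\Lambda^2(\mathbb{Z}/p,1)$ is $\Omega_2\Gamma_2(\mathbb{Z}/p)\simeq\mathbb{Z}/p$, which is precisely the piece corresponding under $\psi_4$ to the unique nonzero piece $\Omega_2\Gamma_2(\mathbb{Z}/p)$ of $L_1\Lambda^2\Gamma_2(\mathbb{Z}/p)$. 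So the image of $\theta_6\circ\zeta_4$ does \emph{not} land in a summand invisible to $\psi_4$; the commutative square merely translates the question of whether $\bar\theta_2$ vanishes into the equivalent question of whether $\theta_6$ vanishes, and no contradiction or vanishing is forced. The paper closes this gap by a genuinely external input: it feeds the unknown group $L_1\EuScript L_s^4(\mathbb{Z}/p)$ into the Curtis spectral sequence \eqref{nzp2} for $K(\mathbb{Z}/p,2)$, whose abutment $\pi_*(K(\mathbb{Z}/p,1))$ is concentrated in degree $1$; if $L_1\EuScript L_s^4(\mathbb{Z}/p)$ were zero, the terms $E^1_{2,6}$ and $E^1_{3,6}$ could not be killed and would survive to $E^\infty$, contradicting $\pi_6K(\mathbb{Z}/p,1)=0$. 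Hence $L_1\EuScript L_s^4(\mathbb{Z}/p)=\mathbb{Z}/p$, which forces $\partial=0$. Note also that the case $p=3$ requires a separate argument (the homology of $K(\mathbb{Z}/3,2)$ contains extra classes, e.g.\ $\mathbb{Z}/9$ in degree $6$, so the $E^1$-page is different and one must first show that the differentials $d^1_{1,7}$ and $d^1_{1,8}$, identified with Koszul maps $\kappa^4$ and $\kappa^4_1$, are injective); your proposal does not distinguish $p=3$ from the other odd primes, and any complete proof must.
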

\begin{proof}
It follows from definition that $\EuScript L_s^4(A)=0$ for every
cyclic group $A$. By   \eqref{exacty4},  the  sequence
\begin{multline}\label{jkq}
0 \to L_3\Lambda^2\Gamma_2(A)\to L_3\EuScript L_s^4(A)\to
L_3Y^4(A)\to L_2\Lambda^2\Gamma_2(A)\to\\ L_2\EuScript L_s^4(A)\to
L_2Y^4(A)\stackrel{\partial}{\to} L_1\Lambda^2\Gamma_2(A)\to
L_1\EuScript L_s^4(A)\to L_1Y^4(A)
\end{multline}
is exact.  By \eqref{derga2},  $L\Gamma_2(\Z/p) = K(\Z/p,0)$
for $p$ odd,  so that
\bee
\label{ll2g2}
L_i\Lambda^2\Gamma_2(\mathbb Z/p)=\begin{cases} \mathbb Z/p,\
i=1,\\ 0,\ i\neq 1\,.\end{cases}
\ee In particular, the right-hand arrow in \eqref{jkq} is surjective.
The definition of $Y^4$  implies that there is a  long
exact sequence
\begin{equation*}
L_2Y^4(A)\to \pi_2\left(\Lambda^3(A)\buildrel{L}\over \otimes
A\right)\to L_2\Lambda^4(A)\to L_1Y^4(A)\to
\pi_1\left(\Lambda^3(A)\buildrel{L}\over \otimes A\right)\to
L_1\Lambda^4(A)
\end{equation*}
and  an isomorphism
$$
L_3Y^4(A) \simeq \ker\{\Tor(\Omega_3(A),A)\to \Omega_4(A)\}.
$$
Lemma \ref{y4} asserts that the  group $L_3Y^4(\Z/p)$ is trivial,
and we know  by \eqref{omeg-cycl} and \eqref{LLambda-cycl} that
$$
L_i\Lambda^4(\mathbb Z/p)=\begin{cases} \mathbb Z/p,\ i=3\\ 0,\
i\neq 3.\end{cases}
$$
The exactness of the sequence \eqref{jkq} then implies that
\bee
\label{liy4}
L_iY^4(\mathbb Z/p)=\begin{cases} \mathbb Z/p,\ i=2,\\
0,\ i\neq 2.\end{cases}
\ee

 We will now  show that the boundary map (\ref{jkq}):
\begin{equation}\label{mn1}
\xymatrix@R=12pt{L_2Y^4(\mathbb Z/p)\ar@{->}[r]^{\partial} \ar@{=}[d]&
L_1\Lambda^2\Gamma_2(\mathbb Z/p) \ar@{=}[d]\\
\mathbb Z/p \ar@{->}[r] & \mathbb Z/p}
\end{equation}
 is trivial.
Consider first the case $p\neq 2,3$. One then has the following
values for the homology groups $H_*K(\mathbb Z/p,2):$
\begin{table}[H]
\begin{tabular}{cccccccccccccc}
 $n$ & \vline & 2 & 3 & 4 & 5 & 6 & 7 & 8 & 9\\ \hline &\vline\\  $H_nK(\mathbb Z/p,2)$ & \vline &
 $\mathbb Z/p$ & 0 & $\mathbb Z/p$ & 0 & $\mathbb
 Z/p$ & 0 & $\mathbb Z/p$ & 0
\end{tabular}
\caption{}
\label{hzp2}
\end{table}

\bigskip

\noindent  The  analogue of the spectral sequence \eqref{nz1} for
$K= K(\Z/p,2)$ is
\begin{equation}\label{nzp2}
  E^1_{r,q}=L_{q+1}\Le^r(\widetilde{\Z} K(\Z/p,2),-1) \Longrightarrow  \pi_q(K(\Z/p,1))
\end{equation}
Reasoning as in the proof of proposition \eqref{L12}, we find
that the initial terms in this spectral sequence  are the
following:
\begin{table}[H]
\begin{tabular}{ccccccccccccccc}
 $q$ & \vline & $E_{1,q}^1$ & $E_{2,q}^1$ & $E_{3,q}^1$ & $E_{4,q}^1$ & $E_{5,q}^1$ & $E_{6,q}^1$ & $E_{7,q}^1$ & $E_{8,q}^1$ & $E_{9,q}^1$ & $E_{10,q}^1$\\
 \hline
 8 & \vline & 0 & $*$ & $*$ & $*$ & $*$ & $*$ & $*$ & $*$ & $*$ & $*$\\
 7 & \vline & $\mathbb Z/p$ & $\mathbb Z/p$ & $*$ & $*$ & $*$ & $*$ & $*$ & $*$ & $*$ & $*$\\
6 & \vline & 0 & $\mathbb Z/p^2$ & $\mathbb Z/p^2$ & $*$ & $*$ & $*$ & $*$ & $*$ & $*$ & $*$\\
5 & \vline & $\mathbb Z/p$ & $\mathbb Z/p$ & $\mathbb Z/p$ & $L_1\EuScript L_s^4(\mathbb Z/p)$ & 0 & 0 & 0 & 0 & 0 & 0\\
4 & \vline & 0 & $\mathbb Z/p$ & $\mathbb Z/p$ & 0 & 0 & 0 & 0 & 0 & 0 & 0\\
3 & \vline & $\mathbb Z/p$ & 0 & 0 & 0 & 0 & 0 & 0 & 0 & 0 & 0\\
2 & \vline & 0 & $\mathbb Z/p$ & 0 & 0 & 0 & 0 & 0 & 0 & 0 & 0\\
1 & \vline & $\mathbb Z/p$ & 0 & 0 & 0 & 0 & 0 & 0 & 0 & 0 & 0 \\
\end{tabular}
\vspace{.5cm} \caption{The $E^1$-term of the spectral sequence
  \eqref{nzp2} for  $ p
  \neq 2,3$}
\end{table}

\bigskip

This spectral sequence converges to the graded group $\mathbb
Z/p[1]$. Suppose that $L_1\EuScript L_s^4(\mathbb Z/p)=0$. In that
case $E_{2,6}^\infty\oplus E_{3,6}^\infty\neq 0$ and  this
contradicts the fact that homotopy groups $\pi_iK(\mathbb Z/p,1)$
 are
trivial for $i \geq 2$. It follows by \eqref{ll2g2} and \eqref{liy4}     that  $L_1\EuScript
L_s^4(\mathbb Z/p)=\mathbb Z/p$ and the map (\ref{mn1}) is the
zero map. The description of all  the derived functors  of
$\EuScript L_s^4(\mathbb Z/p)$ for $p \neq 2,3$ now follows from the  exact
sequence (\ref{jkq}).

\bigskip

  We now consider the  $p=3$ case. We have the
following description of the low degree  homology  of $K(\Z/3,2)$:
\begin{table}[H]
\begin{tabular}{cccccccccccccc}
 $n$ & \vline & 2 & 3 & 4 & 5 & 6 & 7 & 8 & 9\\ \hline &\vline\\  $H_nK(\mathbb Z/3,2)$ & \vline &
 $\mathbb Z/3$ & 0 & $\mathbb Z/3$ & 0 & $\mathbb
 Z/9$ & $\mathbb Z/3$ & $\mathbb Z/3$ & $\mathbb Z/3$
\end{tabular}
\caption{}
\end{table}
\nopagebreak
The initial terms for the spectral sequence    \eqref{nzp2} for $p=3$
 are the following:
\begin{table}[H]
\begin{tabular}{ccccccccccccccc}
 $q$ & \vline & $E_{1,q}^1$ & $E_{2,q}^1$ & $E_{3,q}^1$ & $E_{4,q}^1$ & $E_{5,q}^1$ & $E_{6,q}^1$ & $E_{7,q}^1$ & $E_{8,q}^1$ & $E_{9,q}^1$ & $E_{10,q}^1$\\
 \hline
 8 & \vline & $\mathbb Z/3$ & $*$ & $*$ & $*$ & $*$ & $*$ & $*$ & $*$ & $*$ & $*$\\
 7 & \vline & $\mathbb Z/3$ & $\mathbb Z/3^2$ & $*$ & $*$ & $*$ & $*$ & $*$ & $*$ & $*$ & $*$\\
6 & \vline & $\mathbb Z/3$ & $\mathbb Z/3^2$ & $\mathbb Z/3^3$ & $*$ & $*$ & $*$ & $*$ & $*$ & $*$ & $*$\\
5 & \vline & $\mathbb Z/9$ & $\mathbb Z/3$ & $\mathbb Z/3^2$ & $L_1\EuScript L_s^4(\mathbb Z/3)$ & 0 & $\mathbb Z/3$ & 0 & 0 & 0 & 0\\
4 & \vline & 0 & $\mathbb Z/3$ & $\mathbb Z/9$ & 0 & 0 & 0 & 0 & 0 & 0 & 0\\
3 & \vline & $\mathbb Z/3$ & 0 & 0 & 0 & 0 & 0 & 0 & 0 & 0 & 0\\
2 & \vline & 0 & $\mathbb Z/3$ & 0 & 0 & 0 & 0 & 0 & 0 & 0 & 0\\
1 & \vline & $\mathbb Z/3$ & 0 & 0 & 0 & 0 & 0 & 0 & 0 & 0 & 0 \\
\end{tabular}
\vspace{.5cm} \caption{The $E^1$-term of the  spectral sequence
  \eqref{nzp2} for
$p=3$} \label{table32}
\end{table}

We will now prove  that $L_1\EuScript L_s^4(\mathbb Z/3)=\mathbb
Z/3$.
For  any  abelian group  $A$, the
differentials $d_{1,7}^1$ and $d_{1,8}^1$ in the corresponding
spectral sequence \eqref{nz0} for $n=2$
have the property that the
following natural diagrams are commutative:
$$
\xymatrix@R=20pt{\Gamma_4(A)\ar@{->}[r]^(.45){\kappa^4}\ar@{^{(}->}[d] &
\Gamma_3(A)\otimes
A\ar@{^{(}->}[d]\\
E_{1,7}^1 \ar@{->}[r]^{d_{1,7}^1} & E_{2,6}^1} \qquad  \qquad
\xymatrix@R=12pt{L_1\Gamma_4(A)\ar@{->}[r]^(.4){\kappa_1^4}\ar@{=}[d] &
\pi_1\left(\Gamma_3(A)\lotimes
A\right)\ar@{^{(}->}[d]\\
E_{1,8}^1 \ar@{->}[r]^{d_{1,8}^1} & E_{2,7}^1}\,,
$$
where $\kappa^4$ is the homomorphism in the Koszul complex
$Kos^4(A\buildrel{1_A}\over\to A)$ \eqref{koszul2} and $\kappa_1^4$ its first
derived analog. This implies, in the case $A=\mathbb Z/3$, that
the differentials $d_{1,7}^1$ and $d_{1,8}^1$ are monomorphisms.
The assumption  $L_1\EuScript L_s^4(\mathbb Z/3)=0,$ implies that
$E_{3,6}^\infty\neq 0$ and this contradicts the triviality of the
sixth homotopy group of $K(\mathbb Z/3,1)$.
\end{proof}

\begin{remark} {\rm
For $p=2$ the description of $L_i\EuScript L_s^4(\mathbb Z/p)$ is also  more
 complicated. For example, the group $L_2\EuScript L_s^4(\mathbb Z/2)$
 contains non-trivial  4-torsion elements. In the simplicial language, a generator of the
 4-torsion subgroup  is provided by the following element:
\begin{multline*}
\{s_0a_1,s_1a_1,s_0a_1,s_1a_1\}\,-\,\{s_1a_1,s_1s_0a_0,s_0a_1,s_0a_1\}\,+\\
\{s_0a_1,s_1s_0a_0,s_1a_0,s_1a_0\}\,+\,
\{\{s_1a_1,s_1s_0a_0\},\{s_0a_1,s_1s_0a_0\}\}
\end{multline*}
This 4-torsion element corresponds to  twice the 8-torsion
element of Cohen-Wu \cite{Cohen-Wu},\cite{Wu2} App. A, which lives in
the last  summand of $\pi_7\Sigma \mathbb RP^2=\pi_7M(\mathbb Z/2,2)$
 (see table \ref{pnmz22}).
We will not discuss this computation, since it
involves more elaborate techniques than those described here.}
\end{remark}

 After these preliminaries regarding the derived functors of
 $\Le^4_s$,  let us  begin our computations of the
 homotopy of the spaces  $M(\Z/p,2)$ with the case $p=3$. By remark
 \ref{ourremark}, we know
that
$$
L_i\EuScript L^3(\mathbb Z/3,1)=\begin{cases} \mathbb Z/9,\ i=4\\
\mathbb Z/3,\ i=5\\ 0,\ i\neq 4,5\,. \end{cases}
$$
The computation of the  derived functors $L_q\EuScript L^r(\mathbb
Z/3,1)$ for $q<7$ follows easily  from the Curtis decomposition of
$\Le^r$  and the known values of the derived functors of
 $\EuScript L^3(\Z/3,1)$. We display  the
result  in the following table:

\begin{table}[H]
\begin{tabular}{ccccccccccccccc}
 $q$ & \vline & $E_{1,q}^1$ & $E_{2,q}^1$ & $E_{3,q}^1$ & $E_{4,q}^1$ & $E_{5,q}^1$ & $E_{6,q}^1$ & $E_{7,q}^1$ & $E_{8,q}^1$ & $E_{9,q}^1$ & $E_{10,q}^1$\\
 \hline
6 & \vline & 0 & 0 & 0 & $\mathbb Z/3$ & $\mathbb Z/3$ & $\mathbb Z/3$ & 0 & 0 & 0 & 0\\
5 & \vline & 0 & 0 & $\mathbb Z/3$ & $\mathbb Z/3$ & 0 & $\mathbb Z/3$ & 0 & 0 & 0 & 0\\
4 & \vline & 0 & 0 & $\mathbb Z/9$ & 0 & 0 & 0 & 0 & 0 & 0 & 0\\
3 & \vline & 0 & 0 & 0 & 0 & 0 & 0 & 0 & 0 & 0 & 0\\
2 & \vline & 0 & $\mathbb Z/3$ & 0 & 0 & 0 & 0 & 0 & 0 & 0 & 0\\
1 & \vline & $\mathbb Z/3$ & 0 & 0 & 0 & 0 & 0 & 0 & 0 & 0 & 0 \\
\end{tabular}
\vspace{.5cm} \caption{The $E^1$-term of the spectral sequence
  \eqref{curtisgen2} for $A=\Z/3$ and $n=2$
}
\label{table:11}
\end{table}
The differentials $d_{5,6}^1: \mathbb Z/3\to \mathbb Z/3$ and
$d_{4,6}^2:\mathbb Z/3\to \mathbb Z/3$ are trivial, as follows
from  the comparison between  the Curtis spectral sequences for $K=
M(\mathbb Z/3,2) $ and $K=  K(\mathbb Z/3,2)$
 and from the structure of table \ref{table:11}.
The assumption  that either $d_{5,6}^1$ or $d_{4,6}^2$ is an isomorphism
would  produce a non-trivial term  $E_{3,6}^\infty$ in the
spectral sequence whose initial terms were given in table
\ref{table32}. Looking at the horizontal lines in
  table  \ref{table:11}, we now
see that
\begin{align*}
& \pi_2M(\mathbb Z/3,2)=\mathbb Z/3\\
& \pi_3M(\mathbb Z/3,2)=\mathbb Z/3\\
& \pi_4M(\mathbb Z/3,2)=0\\
& \pi_5M(\mathbb Z/3,2)=\mathbb Z/9\\
& |\pi_6M(\mathbb Z/3,2)|=27.
\end{align*}
Observe that we have in particular exhibited here the cyclic group of
order 9 of \cite{Neisendorfer}, \cite{Leibowitz} mentioned in the introduction.

\bigskip

The homotopy groups of the  spaces $M(\mathbb Z/p,2)$ for a prime
$p\neq 2,3,$ are simpler to describe. In that case, the initial
terms of the spectral sequence  \eqref{curtisgen2} are:

\begin{table}[H]
\begin{tabular}{ccccccccccccccc}
 $q$ & \vline & $E_{1,q}^1$ & $E_{2,q}^1$ & $E_{3,q}^1$ & $E_{4,q}^1$ & $E_{5,q}^1$ & $E_{6,q}^1$ & $E_{7,q}^1$ & $E_{8,q}^1$ & $E_{9,q}^1$ & $E_{10,q}^1$\\
 \hline
6 & \vline & 0 & 0 & 0 & $\mathbb Z/p$ & $\mathbb Z/p$ & 0 & 0 & 0 & 0 & 0\\
5 & \vline & 0 & 0 & 0 & $\mathbb Z/p$ & 0 & 0 & 0 & 0 & 0 & 0\\
4 & \vline & 0 & 0 & $\mathbb Z/p$ & 0 & 0 & 0 & 0 & 0 & 0 & 0\\
3 & \vline & 0 & 0 & 0 & 0 & 0 & 0 & 0 & 0 & 0 & 0\\
2 & \vline & 0 & $\mathbb Z/p$ & 0 & 0 & 0 & 0 & 0 & 0 & 0 & 0\\
1 & \vline & $\mathbb Z/p$ & 0 & 0 & 0 & 0 & 0 & 0 & 0 & 0 & 0 \\
\end{tabular}
\vspace{.5cm} \caption{The $E^1$-term of the   spectral sequence
  \eqref{curtisgen2} for
$A = \Z/p \quad  \text{when} \
  p
  \neq 2,3$ and $n=2$}
\label{tab:14}
\end{table}
\noindent In particular, the derived functors $L_i\EuScript
L^3(\mathbb Z/p,1)$ are simpler for these values of $p$, which
explains the difference between the third columns in tables
\ref{table:11} and \ref{tab:14}. Note also that  the $p$-torsion in $L_4\EuScript
L^3(\mathbb Z/p,1)$ comes from the term $\ker\{\Omega_2(\mathbb
Z/p)\otimes \mathbb Z/p\to L_1\Lambda^3(\mathbb Z/p)\}$ in
(\ref{derl3}). We obtain in particular
\begin{align*}
& \pi_6M(\mathbb Z/p,2)=\mathbb Z/p\\
& |\pi_7M(\mathbb Z/p,2)|=p^2.
\end{align*}

\bigskip

\subsection{Some homotopy groups of  $M(A,3)$} Consider the
spectral sequence \eqref{curtisgen2} for $n= 3$:
\begin{equation}
\label{cssec2} E_{r,q}^1=L_q\EuScript L^r(A,2)\Rightarrow
\pi_{q+1}M(A,3).
\end{equation}
The initial terms of this spectral sequence are
obtained as in the spectral sequence \eqref{cssec}  from the Curtis
decomposition of Lie functors  and the computation of the derived
functors of its graded  components. In addition,  the occurence of a summand $\Le^3(A)$ in
$E^1_{3,6}$ follows from the composition of the maps \eqref{pensionmaps}.
 These initial terms are given in the
following table (recall that $\lambda^2$ is the functor
\eqref{newfunct}):
\begin{table}[H]
\label{lnlma2}
\begin{tabular}{ccccccccccc}
 $q$ & \vline & $E_{1,q}^1$ & \vline & $E_{2,q}^1$ & \vline & $E_{3,q}^1$ & \vline \\
 \hline
  $8$ & \vline & 0 & \vline & 0 & \vline & $L_2\EuScript L^3(A)$ & \vline\\
 $7$ & \vline & 0 & \vline & 0 & \vline & $L_1\EuScript L^3(A)$ & \vline \\
$6$ & \vline & 0 & \vline & 0 & \vline & $\EuScript L^3(A)\oplus
\Tor(A,\mathbb
Z/3)$ & \vline \\
$5$ & \vline & 0 & \vline & $\Omega_2(A)$ & \vline & $A\otimes \mathbb Z/3$ & \vline\\
$4$ & \vline & 0 & \vline & $\lambda^2(A)$ & \vline & 0 & \vline \\
$3$ & \vline & 0 & \vline & $A\otimes \mathbb Z/2$ & \vline & 0 & \vline\\
$2$ & \vline & $A$ & \vline & 0 & \vline & 0 & \vline
\end{tabular}

\begin{tabular}{ccccccccccc}
 $q$ & \vline & $E_{4,q}^1$ & \vline & $E_{5,q}^1$ & \vline & $E_{6,q}^1$ & \vline & $E_{7,q}^1$ & \vline & $E_{8,q}^1$\\
 \hline
  $8$ & \vline & * & \vline & 0 & \vline & * & \vline & * & \vline & *\\
 $7$ & \vline & * & \vline & 0 & \vline & * & \vline & * & \vline & *\\
$6$ & \vline & $\Tor(\lambda^2(A),\mathbb Z/2)\oplus \Omega_2(A)\otimes \mathbb Z/2$ & \vline & 0 & \vline & 0 & \vline & 0 & \vline & *\\
$5$ & \vline & $ \Tor_1(A,\mathbb Z/2,\mathbb Z/2) \oplus  (\Lambda^2(A)\otimes \mathbb Z/2) $ & \vline & 0 & \vline & 0 & \vline & 0 & \vline & $A\otimes\mathbb Z/2$\\
$4$ & \vline & $A\otimes \mathbb Z/2$ & \vline & 0 & \vline & 0 & \vline & 0 & \vline & 0\\
$3$ & \vline & 0 & \vline & 0 & \vline & 0 & \vline & 0 & \vline & 0\\
$2$ & \vline & 0 & \vline & 0 & \vline & 0 & \vline & 0 & \vline &
0
\end{tabular}
\vspace{.5cm} \caption{The initial terms of the spectral sequence
\eqref{cssec2}}
\label{tab:15}
\end{table}

\bigskip

\noindent As a result we have a natural isomorphism
$$
\pi_4M(A,3)\simeq A\otimes \mathbb Z/2,
$$
 which is simply the suspended version of the isomorphism
 \eqref{certain}, as well as the following natural short exact sequence:
$$
0\to A\otimes \mathbb Z/2 \to \pi_5M(A,3)\to \lambda^2(A)\to 0.
$$
However, the latter is not split,   since it is known for example that
$\pi_5M(\mathbb Z/2,3)=\mathbb Z/4$.

\bigskip

The differential $ d_{3,6}^1:E_{3,6}^1\to E_{4,5}^1 $ is trivial,
as can be seen by reduction to the case of  $A$  free abelian of
finite rank, and a
comparison  of the rank of $E_{3,6}^1$  with that of  the  homotopy
group of the corresponding wedge of spheres $S^3$, as computed by the
Hilton-Milnor theorem (see \cite{Curtis:71}
theorem 4.21). On the other hand, the differential $
d_{4,6}^4: E_{4,6}^4\to E_{8,5}^4$ can be  non-trivial. It is an
isomorphism for $A= \Z/2$, as  follows
  from the known description of the groups $\pi_i(M(\mathbb
Z/2,3))=\pi_i(\Sigma^2\mathbb RP^2)$ for small values of  $i$: \hspace{1cm}

\begin{table}[H]
\begin{tabular}{cccccccccc}
 $i$ & \vline & 3 & 4 & 5 & 6 & 7 & 8 &9\\ \hline &\vline\\  $\pi_iM(\mathbb Z/2,3)$ & \vline &
 $\mathbb Z/2$ & $\mathbb Z/2$ & $\mathbb Z/4$ & $\mathbb Z/4 \oplus
 \mathbb Z/2$ & $\mathbb Z/2\oplus \mathbb Z/2$
 & $\mathbb Z/2\oplus \mathbb Z/2$& $\mathbb Z/4 \oplus \mathbb Z/2$
\end{tabular}
\medskip
\caption{}
\label{pnmz23}
\end{table}

In addition,  one  can express the   differential $d^4_{4,6}$
 in \eqref{cssec2}
 as a suspension  by comparing the spectral sequences \eqref{cssec} and
\eqref{cssec2}. For this, consider the following
 commutative diagram, in which  the vertical arrows are suspension
 morphisms:
\bee
\label{suspdiag}
\xymatrix@C=60pt{\Tor(R_2(A),\mathbb Z/2) \ar@{->}[r]^(.55){d^4_{4,5}(A,2)}
\ar@{->}[d] & \Gamma_2(A)\otimes \mathbb Z/2\ar@{->}[d]\\
\Tor(\lambda^2(A),\mathbb Z/2) \ar@{->}[r]_(.57){d^4_{4,6}(A,3)}
& A\otimes \mathbb Z/2}
\ee
 The upper arrow in this diagram  is the restriction to the
first summand of the differential $d^4_{4,5}$ from \eqref{cssec},
 whereas the lower one is the differential $d^4_{4,6}$ from
\eqref{cssec2}.
 The suspension maps are isomorphisms for $A = \Z/2$. Since we
 know that   $d^4_{4,6}$ is an isomorphism  in that case,
so is
     the   differential  $d^4_{4,5}$
 in \eqref{cssec}.

\bigskip

The spectral sequence \eqref{cssec2} determines in particular a
 filtration on the  group $\pi_6M(A,3)$, with  the following  non-trivial
associated graded components:
\begin{align*}
& gr_2\pi_6M(A,3)=\Omega_2(A)\\
& gr_3\pi_6M(A,3)=A\otimes \mathbb Z/3\\
& gr_4\pi_6M(A,3)=(\Lambda^2(A)\otimes \mathbb Z/2) \oplus
\Tor_1(A,\mathbb Z/2,\mathbb Z/2)\\
& gr_8\pi_6M(A,3)=A\otimes \mathbb Z/2/\mathrm{im}(d^4_{4,6})\\
\end{align*}
For $A=\mathbb Z$,  this determines precisely  12 elements in
$\pi_6(S^3)$, which  are the non-trivial elements in the
associated graded components $gr_2,gr_4,gr_5$ listed above. Table \ref{tab:15}  also
implies that there is a natural epimorphism
$$
\pi_7M(A,3)\to \EuScript L^3(A)\oplus \Tor(A,\mathbb Z_3).
$$
As an example of this computation, consider the case $A=\mathbb
Z/3$. A simple analysis, with the help of (\ref{l3a2}), gives the
following description of the initial terms of the corresponding spectral
sequence  \eqref{cssec2}:

\begin{table}[H]
\begin{tabular}{ccccccccccccccc}
 $q$ & \vline & $E_{1,q}^1$ & $E_{2,q}^1$ & $E_{3,q}^1$ & $E_{4,q}^1$ & $E_{5,q}^1$ & $E_{6,q}^1$ & $E_{7,q}^1$ & $E_{8,q}^1$ & $E_{9,q}^1$ & $E_{10,q}^1$\\
 \hline
8 & \vline & 0 & 0 & 0 & 0 & 0 & $\mathbb Z/3$ & 0 & 0 & $\mathbb Z/3$ & 0\\
7 & \vline & 0 & 0 & $\mathbb Z/3$ & 0 & 0 & 0 & 0 & 0 & 0 & 0\\
6 & \vline & 0 & 0 & $\mathbb Z/3$ & 0 & 0 & 0 & 0 & 0 & 0 & 0\\
5 & \vline & 0 & $\mathbb Z/3$ & $\mathbb Z/3$ & 0 & 0 & 0 & 0 & 0 & 0 & 0\\
4 & \vline & 0 & 0 & 0 & 0 & 0 & 0 & 0 & 0 & 0 & 0\\
3 & \vline & 0 & 0 & 0 & 0 & 0 & 0 & 0 & 0 & 0 & 0\\
2 & \vline & $\mathbb Z/3$ & 0 & 0 & 0 & 0 & 0 & 0 & 0 & 0 & 0 \\
\end{tabular}
\vspace{.5cm} \caption{The initial terms of the spectral sequence
\eqref{cssec2} for $A= \Z/3$}
\end{table}
\noindent We conclude that
$$
\pi_7M(\mathbb Z/3,3)=\pi_8M(\mathbb Z/3,3)=\mathbb Z/3.
$$
\subsection{Some homotopy groups of  $M(A,4)$}
The  spectral sequence \eqref{curtisgen2} for $n=4$:
\bee
\label{csse4}
E_{p,q}^1=L_q\EuScript L^p(A,3)\Rightarrow \pi_{q+1}M(A,4).
\ee
has the following initial terms in low dimensions:
{\scriptsize
\begin{table}[H]
\begin{tabular}{ccccccccccc}
 $q$ & \vline & $E_{1,q}^1$ & \vline & $E_{2,q}^1$ & \vline & $E_{3,q}^1$ & \vline \\
 \hline
  $9$ & \vline & 0 & \vline & 0 & \vline & $Y^3(A)$ & \vline\\
 $8$ & \vline & 0 & \vline & 0 & \vline & 0 & \vline \\
$7$ & \vline & 0 & \vline & $R_2(A)$ & \vline & $\Tor(A, \mathbb
Z/3)$ & \vline \\
$6$ & \vline & 0 & \vline & $\Gamma_2(A)$ & \vline & $A\otimes \mathbb Z/3$ & \vline\\
$5$ & \vline & 0 & \vline & $\Tor(A,\mathbb Z/2)$ & \vline & 0 & \vline \\
$4$ & \vline & 0 & \vline & $A\otimes \mathbb Z/2$ & \vline & 0 & \vline\\
$3$ & \vline & $A$ & \vline & 0 & \vline & 0 & \vline
\end{tabular}
\begin{tabular}{ccccccccccc}
 $q$ & \vline & $E_{4,q}^1$ & \vline & $E_{5,q}^1$ & \vline & $E_{6,q}^1$ & \vline & $E_{7,q}^1$ & \vline & $E_{8,q}^1$\\
 \hline
  $9$ & \vline & * & \vline & 0 & \vline & * & \vline & * & \vline & *\\
 $8$ & \vline & * & \vline & 0 & \vline & * & \vline & * & \vline & *\\
$7$ & \vline & $A \ot \Z/2 \oplus \Tor_2(A,\mathbb Z/2,\mathbb Z/2)\oplus \Gamma_2(A)\otimes \mathbb Z/2$ & \vline & 0 & \vline & 0 & \vline & 0 & \vline & $\Tor_1(A,\mathbb Z/2,\mathbb Z/2,\mathbb Z/2)$\\
$6$ & \vline & $\Tor_1(A,\mathbb Z/2,\mathbb Z/2)$ & \vline & 0 & \vline & 0 & \vline & 0 & \vline & $A\otimes\mathbb Z/2$\\
$5$ & \vline & $A\otimes \mathbb Z/2$ & \vline & 0 & \vline & 0 & \vline & 0 & \vline & 0\\
$4$ & \vline & 0 & \vline & 0 & \vline & 0 & \vline & 0 & \vline & 0\\
$3$ & \vline & 0 & \vline & 0 & \vline & 0 & \vline & 0 & \vline &
0
\end{tabular}
\vspace{.5cm} \caption{The initial terms in the spectral sequence
  \eqref{curtisgen2} for $n=4$}
\label{tab:20}
\end{table}
}
\bigskip

 Observe in particular that  the torsion-free expression
 $ \Gamma_2(\Z)$ which
appears in column 2 of
 table \ref{tab:20}  for $A= \Z$ survives to $E^{\infty}_{2,6}$
since  a non-trivial morphism  $d^2_{2,6}:
\Gamma_2(\Z) \la \Z \ot \Z/2$ would  contradict the
known non-trivial value of $\pi_6(S^4)$. We can therefore recognize in
a generator
of  this group  $ \Gamma_2(\Z)$
 the class  of the generalized Hopf
 fibration $\nu:S^7 \la S^4$ .

\bigskip

The suspension homomorphisms $\pi_4M(A,2)\to \pi_5M(A,3)\to
\pi_6M(A,4)$ can be described in terms of the
 suspension homomorphisms between
the corresponding derived functors of Lie functors. The result  is
expressed by the following  commutative diagram (see also
\cite{Bau} VIII \S 3, IX \S 2, XI \S 1):
$$
\xymatrix@R=11pt{\EuScript L_s^3(A)\oplus \Gamma_2(A)\otimes \mathbb
Z/2\ar@{^{(}->}[r] \ar@{->}[d] & \pi_4M(A,2) \ar@{->>}[r] \ar@{->}[d]& R_2(A)\ar@{->}[d]\\
A\otimes \mathbb Z/2 \ar@{^{(}->}[r] \ar@{->}[d]^(.43){\wr}&
\pi_5M(A,3) \ar@{->}[d] \ar@{->>}[r] & \lambda^2(A)
 \ar@{->>}[d]\\ A\otimes \mathbb Z/2
\ar@{^{(}->}[r] & \pi_6M(A,4) \ar@{->}[r] & \Tor(A,\mathbb Z/2)}.
$$
Note that  these horizontal short exact sequences are in general not
split, since  $\pi_4(M(\Z/2,2)) = \pi_5(M(\Z/2,3)) =  \Z/4$.

\subsection{Solving the extension problem}
In  simple cases one can solve the extension problems with the
help of functoriality. For example we have just seen that there is a
 natural exact sequence
$$
0\to A\otimes \mathbb Z/2\to \pi_6M(A,4)\to \Tor(A,\mathbb Z/2)\to
0.
$$
 In particular, for $A= \Z/4$, this   reduces to the
sequence
$$
0\to \mathbb Z/2\to \pi_6M(\mathbb Z/4,4)\to \mathbb Z/2\to 0.
$$
In order to compute the group $\pi_6M(\mathbb Z/4,4)$, we must still determine whether this sequence
is split. The following simple argument will show that this indeed is the
case.

\bigskip

 Let  $F$ be any endofunctor on the category of abelian groups which
 is
 endowed with a natural extension of functors
\begin{equation}\label{bv12}
0\to A\otimes \mathbb Z/2\to F(A)\to \Tor(A, \mathbb Z/2)\to 0.
\end{equation}
We will now prove that for any such functor $F$ the extension
\eqref{bv12} is split. Suppose  on the contrary that $F(\mathbb Z/4)=\mathbb
Z/4$. In that case the group  $F(\mathbb Z/2)$ is isomorphic  either  to
$\mathbb Z/4$ or to $\mathbb
Z/2\oplus \mathbb Z/2$. Let us first suppose that  $F(\mathbb
Z/2)=\mathbb Z/4$ and  consider the
following diagram,
 induced by the
injection  of $\mathbb Z/2$ into $\mathbb Z/4$:
$$
\xyma{\mathbb Z/2\ar@{^{(}->}[r]\ar@{->}[d]^0 & \mathbb
Z/4\ar@{->}[d] \ar@{->>}[r] & \mathbb Z/2\ar@{=}[d]\\ \mathbb
Z/2\ar@{^{(}->}[r] & \mathbb Z/4\ar@{->>}[r] & \mathbb Z/2}
$$
Such a commutative  diagram cannot exist since the pushout of any
extension by the trivial homomorphism is a trivial extension. If on
the other hand  we suppose that $F(\mathbb Z/2)=\mathbb Z/2\oplus
\mathbb Z/2$, then  the natural
projection $\mathbb Z/4\to \mathbb Z/2$ induces a
commutative diagram
$$
\xyma{\mathbb Z/2\ar@{^{(}->}[r]\ar@{=}[d] & \mathbb Z/4\ar@{->}[d] \ar@{->>}[r] & \mathbb Z/2\ar@{->}[d]^0\\
\mathbb Z/2\ar@{^{(}->}[r] & \mathbb Z/2\oplus \mathbb
Z/2\ar@{->>}[r] & \mathbb Z/2}
$$
which also cannot exist, since the pullback of any extension by the
trivial homomorphism is  a trivial extension. This proves  that
the extension \eqref{bv12} is split,
and in particular that  $\pi_6M(\mathbb Z/4,4)=\mathbb Z/2\oplus \mathbb
Z/2$.
\subsection{Some homotopy groups of $M(\Z/3,5)$.} In this simple example, we will illustrate
some  lines of reasoning by which we computed certain differentials
 in  Curtis spectral sequences. Consider such a
 spectral sequence \eqref{curtisgen} for $n=5$, with abutment
$M(\mathbb Z/3,5)$ and  initial terms
$E_{p,q}^1=L_q\EuScript L^p(\mathbb Z/3,4)$
 One finds  in low degree:
\begin{table}[H]
\begin{tabular}{ccccccccccccccc}
 $q$ & \vline & $E_{1,q}^1$ & $E_{2,q}^1$ & $E_{3,q}^1$ & $E_{4,q}^1$ & $E_{5,q}^1$ & $E_{6,q}^1$ & $E_{7,q}^1$ & $E_{8,q}^1$ & $E_{9,q}^1$\\
 \hline
10 & \vline & 0 & 0 & 0 & 0 & 0 & 0 & 0 & 0 & $\mathbb Z/3$\\
9 & \vline & 0 & $\mathbb Z/3$ & 0 & 0 & 0 & 0 & 0 & 0 & 0\\
8 & \vline & 0 & 0 & $\mathbb Z/3$ & 0 & 0 & 0 & 0 & 0 & 0\\
7 & \vline & 0 & 0 & $\mathbb Z/3$ & 0 & 0 & 0 & 0 & 0 & 0\\
6 & \vline & 0 & 0 & 0 & 0 & 0 & 0 & 0 & 0 & 0\\
5 & \vline & 0 & 0 & 0 & 0 & 0 & 0 & 0 & 0 & 0\\
4 & \vline & $\mathbb Z/3$ & 0 & 0 & 0 & 0 & 0 & 0 & 0 & 0\\
\end{tabular}
\vspace{.5cm} \caption{The initial terms in the spectral sequence
  \eqref{curtisgen2} for  $n=5$ and $A = \Z/3$}
\label{tab:19}
\end{table}

We will now   provide two separate  justifications  for  the
triviality of the  differential
$d_{2,9}^1: \mathbb Z/3\to \mathbb Z/3$, both
of which were used  in more complex situations
 in the previous paragraphs. The first
argument goes as follows. The differential $d^1_{2,9}$ for $n=5$ and $A= \Z/3$  lives in
the following commutative diagram, in which  the notation is the same as in diagram   \ref{suspdiag} (and the vertical arrows
are suspension maps):
$$
\xymatrix@C=60pt{L_9\EuScript L^2(\mathbb Z/3,4) \ar@{->}[r]^{d_{2,9}^1(\Z/3,5)}
\ar@{->}[d]^\Sigma &
L_8\EuScript L^3(\mathbb Z/3,4) \ar@{->}[d]^{\Sigma}\\
L_{10}\EuScript L^2(\mathbb Z/3,5) \ar@{->}[r]_{d_{2,10}^1(\Z/3,6)} &
L_9\EuScript L^3(\mathbb Z/3,5)}
$$
This commutative square is actually  of the form:
$$
\xymatrix@C=60pt{\mathbb Z/3 \ar@{->}[r]^{d_{2,9}^1(\Z/3,5)} \ar@{->}[d] &
\mathbb Z/3 \ar@{->}[d]^{\wr}\\
0 \ar@{->}[r]_{d_{2,10}^1(\Z/3,6)} & \mathbb Z/3 }
$$
 so  that the map $d_{2,9}^1(\Z/3,5)$ is trivial. As a consequence,
 $\pi_9M(\mathbb
Z/3,5)=\pi_{10}M(\mathbb Z/3,5)=\mathbb Z/3$.

\bigskip

Here is the second proof of this assertion. Consider
the natural map $M(\mathbb Z/3,5)\to K(\mathbb Z/3,5)$
\eqref{natmap} and the corresponding map between the  spectral
sequences    \eqref{curtisgen2}  and \eqref{nz11} for  $n=5$ and
$A = \Z/3$.  The homology groups of $K(\mathbb Z/3,5)$ are  given
by:
\begin{table}[H]
\begin{tabular}{cccccccccc}
 $n$ & \vline & 5 & 6 & 7 & 8 & 9 & 10 & 11\\ \hline &\vline\\  $H_nK(\mathbb Z/3,5)$ & \vline &
 $\mathbb Z/3$ & 0 & 0 & 0 & $\mathbb
 Z/3$ & $\mathbb Z/3$ & $\mathbb Z/3$
\end{tabular}
\caption{}
\end{table}

The initial terms of the spectral sequence  \eqref{nz11} for
$n=5$ and $A = \Z/3$     are the following:
\begin{table}[H]
\begin{tabular}{ccccccccccccccc}
 $q$ & \vline & $E_{1,q}^1$ & $E_{2,q}^1$ & $E_{3,q}^1$ & $E_{4,q}^1$ & $E_{5,q}^1$ & $E_{6,q}^1$ & $E_{7,q}^1$ & $E_{8,q}^1$ & $E_{9,q}^1$\\
 \hline
10 & \vline & $\mathbb Z/3$ & 0 & 0 & 0 & 0 & 0 & 0 & 0 & $(\mathbb Z/3)$\\
9 & \vline & $\mathbb Z/3$ & $(\mathbb Z/3)$ & 0 & 0 & 0 & 0 & 0 & 0 & 0\\
8 & \vline & $\mathbb Z/3$ & 0 & $(\mathbb Z/3)$ & 0 & 0 & 0 & 0 & 0 & 0\\
7 & \vline & 0 & 0 & $(\mathbb Z/3)$ & 0 & 0 & 0 & 0 & 0 & 0\\
6 & \vline & 0 & 0 & 0 & 0 & 0 & 0 & 0 & 0 & 0\\
5 & \vline & 0 & 0 & 0 & 0 & 0 & 0 & 0 & 0 & 0\\
4 & \vline & $(\mathbb Z/3)$ & 0 & 0 & 0 & 0 & 0 & 0 & 0 & 0\\
\end{tabular}
\vspace{.5cm} \caption{The initial terms in the  spectral
sequence\eqref{nz0}  for $A =  \Z/3\:$ and $n=5$}
\end{table}

We displayed within brackets those terms  which are in the image
of elements from the corresponding  spectral sequence
\eqref{curtisgen2}, as given in table  \ref{tab:19}. Since the spectral sequence \eqref{nz0}
converges here  to the graded group $\mathbb Z/3[4]$, it follows
that the map $d_{2,9}^1$ is necessarily zero: otherwise, the
element $E_{1,10}^1=\mathbb Z/3$ would contribute non-trivially
to $\pi_{10}(K(\Z/3,4))$. It follows that the corresponding map
 $d^1_{2,9}$ in the spectral sequence whose initial terms are displayed in table \ref{tab:19}  is also
 trivial.  We deduce from this   that
$\pi_9M(\mathbb Z/3,5)=\pi_{10}M(\mathbb Z/3,5)=\mathbb Z/3$.

{\appendix
\section{Derived Koszul complex}
\label{app:derKcx}

In this appendix, we illustrate our derived functor methods, by
giving an explicit description of  certain objects and morphisms
obtained by deriving the  Koszul  sequence \eqref{koszul2}.

\bigskip

Let $$ 0\to L\buildrel{\delta}\over\to M\to A\to 0
$$
be a flat resolution of the abelian group $A$. A convenient  model
for the derived category object $L\Lambda^n(A)$ is provided by the
dual Koszul complex of the morphism $L\buildrel{\delta}\over\to
M$. Recall that for $n=2$ this is the complex
\begin{equation}\label{l2}
\Gamma_2(L)\buildrel{\delta_2}\over\to L\otimes
M\buildrel{\delta_1}\over\to \Lambda^2(M)
\end{equation}
with the differentials
\begin{align*}
& \delta_2(\gamma_2(l))=l\otimes \delta(l)\\
& \delta_1(l\otimes m)=\delta(l)\wedge m
\end{align*}
and for $n=3$ the complex $$
\Gamma_3(L)\buildrel{\delta_3}\over\to \Gamma_2(L)\otimes
M\buildrel{\delta_2}\over\to L\otimes
\Lambda^2(M)\buildrel{\delta_1}\over\to \Lambda^3(M)
$$
with the differentials
\begin{align*}
& \delta_3(\gamma_3(l))=\gamma_2(l)\otimes \delta(l)\\
& \delta_3(\gamma_2(l)l')=ll'\otimes \delta(l)+\gamma_2(l)\otimes
\delta(l')\\
& \delta_2(\gamma_2(l)\otimes m)=l\otimes m\wedge \delta(l)\\
& \delta_1(l\otimes m\wedge m')=\delta(l)\wedge m\wedge m'
\end{align*}
The derived category object $L\Lambda^2(A)\buildrel{L}\over\otimes
A$ may be represented by the tensor product of the complex
(\ref{l2}) with the complex $L\buildrel{\delta}\over\to M$, in
other words by the total complex associated to the bicomplex
$$
\xyma{ \Gamma_2(L)\otimes L \ar@{->}[r] \ar@{->}[d] & L\otimes
M\otimes L \ar@{->}[r]
\ar@{->}[d] & \Lambda^2(M)\otimes L \ar@{->}[d]\\
\Gamma_2(L)\otimes M \ar@{->}[r] & L\otimes M\otimes M\ar@{->}[r]
& \Lambda^2(M)\otimes M}
$$
in other words  the complex
\begin{multline}\label{longco}
\Gamma_2(L)\otimes L\buildrel{\delta_3'}\over\to\Gamma_2(L)\otimes
M\oplus (L\otimes M\otimes L)\buildrel{\delta_2'}\over\to
(L\otimes M\otimes M)\oplus \Lambda^2(M)\otimes
L\buildrel{\delta_1'}\over\to \Lambda^2(M)\otimes M
\end{multline}
with differentials
\begin{align*}
& \delta_3'(\gamma_2(l)\otimes l')=(\gamma_2(l)\otimes \delta(l'),
-l\otimes \delta(l)\otimes l')\\
& \delta_2'(\gamma_2(l)\otimes m)=(l\otimes \delta(l)\otimes m,0)\\
& \delta_2'(l\otimes m\otimes l')=(l\otimes m\otimes
\delta(l'),m\wedge \delta(l)\otimes l')\\
& \delta_1'(l\otimes m\otimes m')=\delta(l)\wedge m\otimes m'\\
& \delta_1'(m\wedge m'\otimes l)=m\wedge m'\otimes \delta(l)
\end{align*}

Recall that \begin{align} & L_1\Lambda^2(A)=\Omega_2(A)\label{n1}\\
& \pi_2\left(\Lambda^2(A)\buildrel{L}\over\otimes A
\right)=\Tor(\Omega_2(A),A).\label{n2}
\end{align}
Given elements $a,a'\in\ _nA$, let us choose its representatives
$m,m'\in M$ and cross-cap elements $l,l'\in L$ such that
$\delta(l)=nm,\ \delta(l')=nm'.$ The maps
\begin{align*}
& \Omega_2(A)\to (L\otimes M)/\text{im}(\delta_2)\\
& \Tor(\Omega_2(A),A)\to (\Gamma_2(L)\otimes M\oplus (L\otimes
M\otimes L))/\text{im}(\delta_3')
 \end{align*}
which define the isomorphisms (\ref{n1}) and (\ref{n2}) are given
by
\begin{align*}
& w_2(a)\mapsto l\otimes m+\text{im}(\delta_2)\\ & w_2(a)*b\mapsto
(-\gamma_2(l)\otimes m',l\otimes m\otimes l')+\text{im}(\delta_3)
\end{align*}

Next we consider the following diagram with exact rows and
columns:
\begin{equation}\label{1dia}
\xymatrix@R=13pt{\EuScript L^3(L) \ar@{^{(}->}[r]^{\delta_3''}
\ar@{^{(}->}[d] & L\otimes M\otimes L \ar@{->}[r]^{\delta_2''}
\ar@{^{(}->}[d] &
L\otimes M\otimes M\ar@{->}[r]^{\delta_1''} \ar@{^{(}->}[d] & Y^3(M)\ar@{^{(}->}[d]\\
\Gamma_2(L)\otimes L
\ar@{->}[d]_{\phi_3}\ar@{^{(}->}[r]^{\delta_3'\ \ \ \ \ \ \ \ \ \
\ \ } & \Gamma_2(L)\otimes M\oplus (L\otimes M\otimes L)
\ar@{->>}[d]_{\phi_2}\ar@{->}[r]^{\delta_2'} & (L\otimes M\otimes
M)\oplus \Lambda^2(M)\otimes L \ar@{->>}[d]_{\phi_1}
\ar@{->}[r]^{\ \ \ \ \ \  \ \ \ \delta_1'} & \Lambda^2(M)\otimes M \ar@{->>}[d]_{\phi_0}\\
\Gamma_3(L) \ar@{^{(}->}[r]^{\delta_3} \ar@{->>}[d] &
\Gamma_2(L)\otimes M \ar@{->}[r]^{\delta_2} & L\otimes
\Lambda^2(M) \ar@{->}[r]^{\delta_1}  & \Lambda^3(M)\\ L\otimes
\mathbb Z/3 }
\end{equation}
with
\begin{align*}
& \phi_0(m\wedge m'\otimes m'')=m\wedge m'\wedge m''\\
& \phi_1(l\otimes m\otimes m')=l\otimes m\wedge m'\\
& \phi_1(m\wedge m'\otimes l)=l\otimes m\wedge m'\\
& \phi_2(\gamma_2(l)\otimes m)=-\gamma_2(l)\otimes m\\
& \phi_2(l\otimes m\otimes l')=ll'\otimes m\\
& \phi_3(\gamma_2(l)\otimes l')=-\gamma_2(l)l'.
\end{align*}
and
\begin{align*}
& \delta_3''(\overline{l\otimes l'\wedge l''})=l\otimes
\delta(l'')\otimes l'+l''\otimes \delta(l)\otimes l'-l\otimes
\delta(l')\otimes l''-l'\otimes \delta(l)\otimes l''\\ &
\delta_2''(l\otimes m\otimes l')=l\otimes \delta(l')\otimes
m+l'\otimes \delta(l)\otimes m+l\otimes m\otimes \delta(l')\\
& \delta_1''(l\otimes m\otimes m')=\{\delta(l),m',m\}
\end{align*}
Here $\overline{l\otimes l'\wedge l''}$ denotes the image of the
element $l\otimes l'\wedge l''$ under the natural epimorphism
$L\otimes \Lambda^2(L)\to \EuScript L^3(L)$. We will now make use
of  the fact that the dual de Rham complex
$$
0\to \Lambda^3(L)\to L\otimes \Lambda^2(L)\to \Gamma_2(L)\otimes
L\to \Gamma_3(L)
$$
has trivial homology in positive dimensions and hence
$$
\EuScript L^3(L)=\ker\{\Gamma_2(L)\otimes L\to
\Gamma_3(L)\}=\text{coker}\{\Lambda^3(L)\to L\otimes
\Lambda^2(L)\}
$$

Consider the functor $\bar E^3(A):=\text{im}\{\Gamma_2(A)\otimes
A\to \Gamma_3(A)\}.$ We have a natural short exact sequence
$$
0\to \bar E^3(A)\to \Gamma_3(A)\to A\otimes \mathbb Z/3\to 0
$$
which induces the following commutative diagram
\begin{equation}\label{2dia}
\xymatrix@R=13pt{ \bar E^3(L) \ar@{^{(}->}[d]
\ar@{->}[r]^{\bar\delta_3} & \Gamma_2(L)\otimes M \ar@{=}[d]
\ar@{->}[r]^{\delta_2} & L\otimes
\Lambda^2(M) \ar@{->}[r]^{\delta_1} \ar@{=}[d] & \Lambda^3(M)\ar@{=}[d]\\
\Gamma_3(L) \ar@{->>}[d] \ar@{->}[r]^{\delta_3} &
\Gamma_2(L)\otimes M \ar@{->}[r]^{\delta_2} & L\otimes
\Lambda^2(M)
\ar@{->}[r]^{\delta_1}  & \Lambda^3(M)\\
L\otimes \mathbb Z/3}
\end{equation}
From diagrams (\ref{1dia}) and (\ref{2dia}) we deduce the
following diagram with exact  arrows and columns:
\begin{equation}\label{conne1}
\xymatrix@R=13pt{ & H_2W \ar@{^{(}->}[r] \ar@{^{(}->}[d] &
L_2Y^3(A)
\ar@{^{(}->}[d]\\
& \pi_2\left(L\Lambda^2(A)\buildrel{L}\over\otimes A\right)
\ar@{=}[r] \ar@{->}[d] &
\pi_2\left(L\Lambda^2(A)\buildrel{L}\over\otimes A\right)
\ar@{->}[d]\\
L\otimes \mathbb Z/3 \ar@{->}[r] & H_2Q \ar@{->>}[r] \ar@{->}[d] & L_2\Lambda^3(A)\ar@{->}[d]\\
& H_1W \ar@{->}[r] \ar@{->}[d] & L_1Y^3(A)\ar@{->}[d]\\
& \pi_1\left(L\Lambda^2(A)\buildrel{L}\over\otimes A\right)
\ar@{=}[r] \ar@{->>}[d] &
\pi_1\left(L\Lambda^2(A)\buildrel{L}\over\otimes
A\right)\ar@{->>}[d]\\
& L_1\Lambda^3(A) \ar@{=}[r] & L_1\Lambda^3(A)}
\end{equation}
where $W$ and $Q$ are the upper rows in diagrams (\ref{1dia}) and
(\ref{2dia}) respectively. We give the following  simple example,
which illustrates the inner life of the previous
diagrams.\\

\begin{prop}
\label{ly3z3}
$$
L_iY^3(\mathbb Z/3)=\begin{cases} \mathbb Z/3,\ i=2\\
\mathbb Z/9,\ i=1,\\ 0,\ i\neq 1,2\end{cases}
$$
\end{prop}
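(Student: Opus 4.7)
The plan is to combine the long exact sequence \eqref{longex} with the comparison diagram \eqref{conne1} of the appendix to determine $L_iY^3(\mathbb Z/3)$ completely. The principal difficulty is to decide whether the short exact sequence produced by \eqref{longex} splits.

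First I collect the necessary computations for $A = \mathbb Z/3$. From \eqref{omeg-cycl} and \eqref{LLambda-cycl}, $\Omega_n(\mathbb Z/3) = \mathbb Z/3$ for all $n$ and $L_i\Lambda^n(\mathbb Z/3) = 0$ for $i \neq n-1$. The K\"unneth formula yields $\pi_1(L\Lambda^2(\mathbb Z/3) \lot \mathbb Z/3) = \mathbb Z/3$, $\pi_2(L\Lambda^2(\mathbb Z/3) \lot \mathbb Z/3) = \tor(\Omega_2(\mathbb Z/3), \mathbb Z/3) = \mathbb Z/3$, and $\pi_i = 0$ for $i \geq 3$. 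By Remark \ref{ourremark}, the natural arrow $\tor(\Omega_2(\mathbb Z/3), \mathbb Z/3) \to \Omega_3(\mathbb Z/3)$ appearing in \eqref{longex} is zero. Inserting these inputs into \eqref{longex} produces $L_2Y^3(\mathbb Z/3) = \mathbb Z/3$ and a short exact sequence
\[
0 \to \mathbb Z/3 \to L_1Y^3(\mathbb Z/3) \to \mathbb Z/3 \to 0,
\]
while the vanishing of $L_0Y^3(\mathbb Z/3) = Y^3(\mathbb Z/3)$ and of $L_iY^3(\mathbb Z/3)$ for $i \geq 3$ is immediate. It remains to distinguish $\mathbb Z/9$ from $\mathbb Z/3 \oplus \mathbb Z/3$.

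To this end, I will compute the complexes $W$ and $Q$ of \eqref{1dia} and \eqref{2dia} for the flat resolution $\delta\colon \mathbb Z \xrightarrow{\cdot 3} \mathbb Z$. Since $\mathcal L^3(\mathbb Z) = Y^3(\mathbb Z) = 0$, the complex $W$ collapses to $0 \to \mathbb Z \xrightarrow{\delta_2''} \mathbb Z \to 0$ in degrees $2,1$; the explicit formula
\[
\delta_2''(l \otimes m \otimes l') = l \otimes \delta(l') \otimes m + l' \otimes \delta(l) \otimes m + l \otimes m \otimes \delta(l')
\]
evaluated on $1 \otimes 1 \otimes 1$ gives $3 + 3 + 3 = 9$, so $\delta_2''$ is multiplication by $9$; hence $H_1W = \mathbb Z/9$ and $H_2W = 0$. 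A parallel computation, based on $\bar E^3(\mathbb Z) = 3\mathbb Z \subset \Gamma_3(\mathbb Z)$ and $\delta_3(3\gamma_3(1)) = 9\,(\gamma_2(1) \otimes 1)$, gives $H_2Q = \mathbb Z/9$. The middle column of \eqref{conne1} then reads
\[
0 \to \mathbb Z/3 \to \mathbb Z/9 \to \mathbb Z/9 \to \mathbb Z/3 \to 0,
\]
exact as expected.

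Finally I will use that \eqref{conne1} furnishes a morphism from its middle column to the right column \eqref{longex}, whose verticals over the $\pi_i$ and $L_1\Lambda^3$ are identities and whose vertical $H_2Q \twoheadrightarrow L_2\Lambda^3$ is the surjection $\mathbb Z/9 \twoheadrightarrow \mathbb Z/3$. A short diagram chase in the commuting square
\[
\xymatrix{ H_2Q \ar[r]^(.45){\partial_2}\ar@{->>}[d] & H_1W \ar[d] \\
L_2\Lambda^3 \ar@{^{(}->}[r]^(.43){\partial_3} & L_1Y^3(\mathbb Z/3) }
\]
shows that the right-hand vertical map is surjective: any $y \in L_1Y^3(\mathbb Z/3)$ admits a lift $\tilde y \in H_1W$ of its image in $\pi_1$, and the correction $y - (\mathrm{vert})(\tilde y) \in \mathrm{im}(\partial_3)$ is itself hit by $(\mathrm{vert}) \circ \partial_2$ via a lift of the corresponding element of $L_2\Lambda^3$ to $H_2Q$. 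Since $|L_1Y^3(\mathbb Z/3)| = 9$ and the source $H_1W = \mathbb Z/9$ is cyclic, this surjection must be an isomorphism, yielding $L_1Y^3(\mathbb Z/3) = \mathbb Z/9$. The main obstacle is not really conceptual but rather the careful bookkeeping required to set up \eqref{1dia} and \eqref{conne1} for the specific resolution at hand; once the complexes are in view the remainder is routine.
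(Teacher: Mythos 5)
Your proof is correct and follows essentially the same route as the paper's: the long exact sequence \eqref{longex} pins down $L_2Y^3(\mathbb{Z}/3)=\mathbb{Z}/3$ and the order $|L_1Y^3(\mathbb{Z}/3)|=9$, and the explicit computation $H_1W=\mathbb{Z}/9$ for the resolution $\mathbb{Z}\xrightarrow{3}\mathbb{Z}$ combined with the comparison diagram \eqref{conne1} resolves the extension, exactly as in the paper (your diagram chase just spells out the surjectivity the paper leaves implicit). The only cosmetic difference is that you obtain the vanishing of $\Tor(\Omega_2(\mathbb{Z}/3),\mathbb{Z}/3)\to\Omega_3(\mathbb{Z}/3)$ by citing Remark \ref{ourremark}, whereas the paper re-derives it from the generator $(\gamma_2(l)\otimes m,\,l\otimes m\otimes l)$ and the identity $\phi_2$ of it equals $\delta_3(\gamma_3(l))$; both are valid since that remark is established independently earlier in the paper.
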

\begin{proof}
It follows from the description $Y^3(A)=\ker\{\Lambda^2(A)\otimes
A \to \Lambda^3(A)\}$ that
$$
L_2Y^3(A)=\ker\{\Tor(\Omega_2(A),A)\to L_2\Lambda^3(A)\}
$$
Let $A=\mathbb Z/3$ and $L\buildrel{\partial}\over\to M$ is
$\mathbb Z\buildrel{3}\over\to \mathbb Z.$ Let $l,m$ be generators
of $L$ and $M$ respectively with $\delta(l)=3m$. The group
$\Tor(\Omega_2(A),A)$ is generated by the homology class of the
element $(\gamma_2(l)\otimes m, l\otimes m\otimes l)$ in the
complex (\ref{longco}). We have
$$
\phi_2(\gamma_2(l)\otimes m, l\otimes m\otimes
l)=\gamma_2(l)\otimes m+ll\otimes m=3\gamma_2(l)\otimes
m=\gamma_2(l)\otimes \delta(l)=\delta_3(\gamma_3(l))
$$
Hence the map
$$
\Tor(\Omega_2(A),A)\to L_2\Lambda^3(A)
$$
induced by the map $\phi_2$ is the zero map. This proves that
$$
L_2Y^3(\mathbb Z/3)=\mathbb Z/3.
$$
 It is easy to see that for our
choice of $L$ and $M$, the complex $W$   has the form $\mathbb
Z\buildrel{9}\over \to \mathbb Z \to 0$ so that  $H_1W=\mathbb
Z/9,\ H_2W=0.$ In this case, the diagram (\ref{conne1}) has the
form
$$
\xymatrix@R=13pt{ & 0 \ar@{->}[r] \ar@{->}[d] & \mathbb Z/3
\ar@{->}[d]^\simeq\\
& \mathbb Z/3\ar@{=}[r] \ar@{->}[d] & \mathbb Z/3 \ar@{->}[d]^0 \\
\mathbb Z/3 \ar@{^{(}->}[r] & H_2Q \ar@{->>}[r] \ar@{->}[d] & \mathbb Z/3\ar@{->}[d]\\
& \mathbb Z/9 \ar@{->}[r] \ar@{->}[d] & L_1Y^3(A)\ar@{->}[d]\\
& \mathbb Z/3 \ar@{=}[r] \ar@{->}[d] & \mathbb Z/3
\ar@{->}[d]\\
& 0 & 0 }
$$
and we see that the map $H_1W\to L_1Y^3(A)$ is an isomorphism and
hence
$$
L_1 Y^3(\mathbb Z/3)=\mathbb Z/9.
$$
\end{proof}

The object $L\Gamma_2(A)$ of the derived category may be
represented as the following complex:
$$
L\otimes L\to \Gamma_2(L)\otimes (M\otimes L)\to \Gamma_2(M)
$$
Consider the following diagram:
\begin{equation}\label{cncn1}
\xymatrix@R=13pt{\Lambda^2(L) \ar@{^{(}->}[d] \ar@{->}[r] &
M\otimes L \ar@{->}[r] \ar@{^{(}->}[d]^{(0,id)} &
\Gamma_2(M) \ar@{=}[d]\\
L\otimes L\ar@{->}[r] \ar@{->>}[d] & \Gamma_2(L)\oplus (M\otimes
L)\ar@{->>}[d] \ar@{->}[r]
& \Gamma_2(M)\\
SP^2(L) \ar@{->}[r] & \Gamma_2(L)} \end{equation} Denote by
$C=C(L\buildrel{\delta}\over\to M)$ the upper complex in
(\ref{cncn1}). Diagram (\ref{cncn1}) implies the following exact
sequence of homology groups:
$$
0\to H_1C\to R_2(A)\to L\otimes \mathbb Z/2\to H_0C\to
\Gamma_2(A)\to 0
$$
In particular, the $p$-torsion components of $H_iC$ and
$L_i\Gamma_2$ are naturally isomorphic for $p\neq 2$.

The objects $L\Gamma_3(A)$ and $A\buildrel{L}\over\otimes
L\Gamma_2(A)$ of the derived category may be represented by the
following complexes:
\begin{multline} L\otimes L\otimes L \to(\Gamma_2(L)\otimes L)\oplus (L\otimes
\Gamma_2(L))\oplus (L\otimes L\otimes M)\to\\ \Gamma_3(L)\oplus
(\Gamma_2(L)\otimes M)\oplus (L\oplus \Gamma_2(M))\to\Gamma_3(M)
\end{multline}

\begin{multline}
L\otimes L\otimes L\to (M\otimes L\otimes L)\oplus (L\otimes
\Gamma_2(L))\oplus (L\otimes L\otimes M)\to \\ (L\otimes
\Gamma_2(M))\oplus (M\otimes \Gamma_2(L))\oplus (M\otimes L\otimes
M)\to M\otimes \Gamma_2(M)
\end{multline}

Consider the following diagram:
\begin{equation}\label{mz1}
\xymatrix@R=13pt{\Lambda^3(L) \ar@{->}[r] \ar@{=}[d] & M\otimes
\Lambda^2(L) \ar@{->}[r] \ar@{=}[d] & SP^2(M)\otimes L\ar@{->}[r]
\ar@{^{(}->}[d]
 & SP^3(M)
\ar@{^{(}->}[d]
\\
\Lambda^3(L) \ar@{->}[r] & M\otimes \Lambda^2(L) \ar@{->}[r] &
\Gamma_2(M)\otimes L\ar@{->}[r] \ar@{->>}[d]& \Gamma_3(M)\ar@{->>}[d]\\
& & M\otimes L\otimes \mathbb Z/2 \ar@{->}[r] & M\otimes \mathbb
Z/3\oplus (M\otimes M\otimes \mathbb Z/2)}
\end{equation}
The upper complex in (\ref{mz1}) is a model for the element
$LSP^3(A)$ in the derived category. Denote the middle horizontal
complex by $D=D(L\buildrel{\delta}\over\to M)$. We have the
natural isomorphism
$$
H_2D\simeq L_2SP^3(A)
$$
and the following exact sequence:
\begin{multline}
0\to L_1SP^3(A)\to H_1D\to \Tor(M\otimes A,\mathbb Z/2)\to\\
SP^3(A)\to H_0D\to M\otimes \mathbb Z/3\oplus (M\otimes A\otimes
\mathbb Z/2)\to 0
\end{multline}

Now consider the following diagram which extends the diagram
(\ref{1dia}):
\begin{equation}\label{bdi}
 \xymatrix@R=13pt{\Lambda^3(L) \ar@{^{(}->}[r]
\ar@{^{(}->}[d] & M\otimes \Lambda^2(L)
 \ar@{->}[r] \ar@{^{(}->}[d] & \Gamma_2(M)\otimes L \ar@{->}[r] \ar@{^{(}->}[d] & \Gamma_3(M)\ar@{^{(}->}[d]\\
L\otimes \Lambda^2(L) \ar@{^{(}->}[r]^{\bar\delta_3\ \ \ \ \ \ \ \
\ } \ar@{->}[d]^{\phi_3'} & M\otimes \Lambda^2(L)\oplus (L\otimes
M\otimes L)\ar@{->}[r]^{\bar\delta_2} \ar@{->}[d]^{\phi_2'} &
(M\otimes M\otimes L)\oplus L\otimes \Gamma_2(M)\ar@{->}[r]^{\ \ \ \ \ \ \ \ \ \bar\delta_1} \ar@{->}[d]^{\phi_1'} & M\otimes \Gamma_2(M)\ar@{->}[d]^{\phi_0'}\\
\Gamma_2(L)\otimes L
\ar@{->}[d]^{\phi_3}\ar@{^{(}->}[r]^{\delta_3'\ \ \ \ \ \ \ \ \ }
& \Gamma_2(L)\otimes M\oplus (L\otimes M\otimes L)
\ar@{->>}[d]^{\phi_2}\ar@{->}[r]^{\delta_2'} & (L\otimes M\otimes M)\oplus \Lambda^2(M)\otimes L \ar@{->>}[d]^{\phi_1}\ar@{->}[r]^{\ \ \ \ \ \ \ \ \ \delta_1'} & \Lambda^2(M)\otimes M \ar@{->>}[d]^{\phi_0}\\
\Gamma_3(L) \ar@{^{(}->}[r]^{\delta_3} \ar@{->>}[d] &
\Gamma_2(L)\otimes M \ar@{->}[r]^{\delta_2} & L\otimes
\Lambda^2(M) \ar@{->}[r]^{\delta_1}  & \Lambda^3(M)\\ L\otimes
\mathbb Z/3 }
\end{equation} Here
\begin{align*}
& \bar \delta_1(m\otimes m'\otimes l)=m\otimes m'\delta(l)\\
& \bar \delta_1(l\otimes \gamma_2(m))=\delta(l)\otimes
\gamma_2(m)\\
& \bar \delta_2(m\otimes l\wedge l')=(m\otimes \delta(l)\otimes
l'-m\otimes \delta(l')\otimes l,0)\\
& \bar \delta_2(l\otimes m\otimes l')=(\delta(l)\otimes m\otimes
l',-l\otimes m\delta(l))\\
& \bar \delta_3(l\otimes l'\wedge l'')=(\delta(l)\otimes l'\wedge
l'',-l\otimes \delta(l')\otimes l''+l\otimes \delta(l'')\otimes
l')
\end{align*}
and
\begin{align*}
& \phi_0'(m\otimes \gamma_2(m'))=m\wedge m'\otimes m'\\
& \phi_1'(m\otimes m'\otimes l)=(-l\otimes m\otimes m', m\wedge
m'\otimes l)\\
& \phi_1'(l\otimes \gamma_2(m))=(l\otimes m\otimes m,0)\\
& \phi_2'(l\otimes m\otimes l')=(-ll'\otimes m,-l\otimes m\otimes
l')\\
& \phi_2'(m\otimes l\wedge l')=(0,l\otimes m\otimes l'-l'\otimes
m\otimes l)\\
& \phi_3'(l\otimes l'\wedge l'')=-ll'\otimes l''+ll''\otimes l'
\end{align*}

We obtain the natural isomorphism of complexes:
$$
\xymatrix@R=13pt{H_2D \ar@{->}[r] \ar@{=}[d] &
H_2\left(A\buildrel{L}\over\otimes C \right) \ar@{=}[d]
\ar@{->}[r] \ar@{=}[d] & \Tor(\Omega_2(A),A)\ar@{->}[r] \ar@{=}[d]
&
\Omega_3(A) \ar@{=}[d]\\
L_2SP^3(A) \ar@{->}[r] & \Tor(A,L_1SP^2(A))\ar@{->}[r] &
\tor(\Omega_2(A),A) \ar@{->}[r] & \Omega_3(A) }
$$
The map
$$
\tor(A,L_1SP^2(A))\to (M\otimes \Lambda^2(L)\oplus (L\otimes
M\otimes L))/\text{im}(\bar\delta_3)
$$
is given as follows: let $a,a',a''\in A$ with $na=na'=na''=0$ are
represented by elements $m,m',m''$ is $M$ and
$\delta(l)=nm,\delta(l')=nm',\delta(l'')=nm'',$ then
$$
a*a'\hat *a'' \mapsto (-m\otimes l'\wedge l'', l\otimes m'\otimes
l''-l\otimes m''\otimes l')+\text{im}(\bar\delta_3)
$$
The map $\phi_2'$ induces the Koszul-type map
$$
\tor(A,L_1SP^2(A))\to \tor(\Omega_2(A),A)
$$
 defined by
$$
a*a'\hat * a''\mapsto
(w(a+a'')-w(a)-w(a''))*a'-(w(a+a')-w(a)-w(a'))*a''.
$$

\noindent{\bf Example.} In the case $(L\buildrel{\delta}\over\to
M)=(\mathbb Z\buildrel{n}\over\to \mathbb Z),$ the diagram
(\ref{bdi}) has the following form:
$$
\xymatrix@R=18pt{& & \mathbb Z \ar@{->}[r]^{3n} \ar@{->}[d]^(.45){(1,1)} & \mathbb Z\ar@{->}[d]^{1}\\
& \mathbb Z \ar@{->}[d]^{(-2,-1)} \ar@{->}[r]^{(n,-2n)} &
\mathbb Z\oplus \mathbb Z \ar@{->}[r]_(.55){(2n,n)} \ar@{->}[d]^(.58){(-1,1)} & \mathbb Z\\
\mathbb Z \ar@{->}[d]_{-3} \ar@{->}[r]^{(n,-n)} & \mathbb Z\oplus
\mathbb Z
\ar@{->}[d]^{(-1,2)}\ar@{->}[r]^{(n,n)} & \mathbb Z\\
\mathbb Z \ar@{->}[r]^n & \mathbb Z }
$$
This diagram implies that the map
$$
H_1\left(\mathbb Z/n\buildrel{L}\over\otimes C(\mathbb
Z\buildrel{n}\over\to \mathbb Z)\right)\to
\pi_1\left(L\Lambda^2(\mathbb Z/n)\buildrel{L}\over \otimes
\mathbb Z/n\right)
$$
is multiplication by 3 in the group $\mathbb Z/n$.

\bigskip

When $A=\mathbb Z/2$ and $C=C(\mathbb Z\buildrel{2}\over\to
\mathbb Z)$, we have the following commutative diagram
$$
\xymatrix@R=13pt{A\otimes H_1C\ar@{->}[r]
\ar@{^{(}->}[d]
 & A\otimes R_2(A)
\ar@{^{(}->}[d]
\\
H_1\left(A\buildrel{L}\over\otimes C\right) \ar@{->}[r]
\ar@{->>}[d] & \pi_1\left(A\buildrel{L}\over\otimes
L\Gamma_2(A)\right)\ar@{->>}[d] \ar@{->}[r] &
\pi_1\left(L\Lambda^2(A)\buildrel{L}\over\otimes A\right)\\
\Tor(A,H_0C) \ar@/_55pt/[rru]_{\simeq} \ar@{->}[r]^{\simeq} &
\Tor(A,\Gamma_2(A)) }
$$
\vspace{.5cm}

As a corollary, we find  that:
\begin{prop}
The derived Koszul map
$$
\pi_1\left(A\buildrel{L}\over\otimes L\Gamma_2(A)\right)\to
\pi_1\left(L\Lambda^2(A)\buildrel{L}\over\otimes A\right)
$$
is the zero map for $A=\mathbb Z/3$ and an epimorphism for
$A=\mathbb Z/p$, where $p$ is a prime $\neq 3$.
\end{prop}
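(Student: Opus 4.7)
\medskip

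\noindent\textbf{Proof plan.} My approach is to exploit the commutative diagram of Künneth-type exact sequences that immediately precedes the proposition, together with the explicit computation worked out in the example for the resolution $L \xrightarrow{\delta} M = (\mathbb{Z} \xrightarrow{n} \mathbb{Z})$ of $A = \mathbb{Z}/n$. The key observation is that once $A$ is specialised to $\mathbb{Z}/p$ for a prime $p$, the diagram simplifies dramatically and the map in question becomes literally identified with the ``multiplication by $3$'' map on $\mathbb{Z}/p$ computed there.

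First I would apply the diagram preceding the proposition with $A = \mathbb{Z}/p$. For $p$ odd, proposition \ref{derSP2} and the short exact sequence $0\to L_1SP^2(A)\to R_2(A)\to \Tor(A,\mathbb Z/2)\to 0$ give $R_2(\mathbb{Z}/p)=0$, so that the right-hand Künneth column collapses to an isomorphism $\pi_1(A\lot L\Gamma_2(A))\xrightarrow{\simeq}\Tor(A,\Gamma_2(A))$. A parallel analysis of the exact sequence $0\to H_1C\to R_2(A)\to L\otimes\mathbb Z/2\to H_0C\to \Gamma_2(A)\to 0$ at $A=\mathbb{Z}/p$ (with $L=\mathbb{Z}$) yields $H_1C=0$, so that the left-hand Künneth column also collapses to an isomorphism $H_1(A\lot C)\xrightarrow{\simeq}\Tor(A,H_0C)$. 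By the commutativity of the displayed diagram, both of these isomorphisms fit together with the curved isomorphism $\Tor(A,H_0C)\xrightarrow{\simeq}\pi_1(L\Lambda^2(A)\lot A)$ so that the composition
\[
H_1(A\lot C)\longrightarrow \pi_1(A\lot L\Gamma_2(A))\longrightarrow \pi_1(L\Lambda^2(A)\lot A)
\]
coincides, via these identifications, with the arrow whose explicit form was computed in the preceding example.

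Second, I would plug in the explicit resolution $\mathbb{Z}\xrightarrow{p}\mathbb{Z}$ into the displayed tall diagram that trails diagram~(\ref{bdi}). The example asserts that the resulting map $H_1(\mathbb Z/p\lot C)\to \pi_1(L\Lambda^2(\mathbb Z/p)\lot \mathbb Z/p)$ is multiplication by $3$ on $\mathbb{Z}/p$. Combining this with the previous step, the map of the proposition is identified with multiplication by $3$ on the cyclic group $\mathbb Z/p$. For $p=3$ this is the zero map, proving the first assertion; for $p$ an odd prime with $p\neq 3$, this is an isomorphism, proving surjectivity.

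Finally I would handle the case $p=2$ separately, since there $R_2(\mathbb{Z}/2)\neq 0$ and the right Künneth column does not collapse. However the example computation still applies, giving multiplication by $3$ on $\mathbb Z/2$, which is the identity; hence the composite $H_1(A\lot C)\to \pi_1(L\Lambda^2(A)\lot A)$ is already surjective, and a fortiori so is the factor through $\pi_1(A\lot L\Gamma_2(A))$. The main obstacle, I expect, will not be the arithmetic at the end but rather the careful bookkeeping of the various maps in the big diagram~(\ref{bdi}): one must verify that the two distinct paths from $H_1(A\lot C)$ to $\pi_1(L\Lambda^2(A)\lot A)$---one through the diagonal of Künneth identifications and one through the derived Koszul map---are genuinely induced by the same chain-level morphism $\phi_2'$. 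Once this naturality is in place, the numerical conclusion is immediate.
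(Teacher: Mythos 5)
Your argument is correct and follows essentially the same route as the paper: both rest on the explicit computation for the resolution $\mathbb{Z}\xrightarrow{n}\mathbb{Z}$ showing the map through the complex $C$ is multiplication by $3$ on $\mathbb{Z}/n$, together with the identification of $H_\ast(A\lot C)$ with $\pi_\ast(A\lot L\Gamma_2(A))$ away from the prime $2$ and the separate surjectivity argument at $p=2$. Your collapse of the two K\"unneth columns via $R_2(\mathbb{Z}/p)=0$ and $H_1C=0$ is just a hands-on version of the paper's remark that the $p$-torsion of $H_iC$ and $L_i\Gamma_2(A)$ agree for $p\neq 2$.
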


}

\vspace{1cm}

\end{document}